\numberwithin{equation}{section}
\theoremstyle{plain}
\newtheorem{theorem}{Theorem}[section]
\newtheorem{lemma}[theorem]{Lemma}
\newtheorem{corollary}[theorem]{Corollary}
\newtheorem{proposition}[theorem]{Proposition}
\newtheorem{conjecture}[theorem]{Conjecture}
\theoremstyle{definition}
\newtheorem{definition}[theorem]{Definition}
\theoremstyle{remark}
\newtheorem{case[theorem]}{Case}
\newcommand{\fn}{F _3 ^N}
\newcommand{\beqa}{\begin{eqnarray*}}
\newcommand{\eeqa}{\end{eqnarray*}}
\newcommand{\beqan}{\begin{eqnarray}}
\newcommand{\eeqan}{\end{eqnarray}}
\newcommand{\one }{\mathbf 1 }
\newcommand{\Z}{\mathbb Z}
\title{New bounds on cap sets}
\author{Michael Bateman and Nets Hawk Katz}
\begin{document}

\maketitle

\begin{abstract}
We provide an improvement over Meshulam's bound on cap sets in $F_3^N$. We show that there exist universal $\epsilon>0$
and $C>0$ so that any cap set in $F_3^N$ has size at most $C {3^N \over N^{1+\epsilon}}$. We do this by obtaining
quite strong information about the additive combinatorial properties of the large spectrum. 

\end{abstract}

\footnote{MSC 2010 classification: 11T71,05D40}

\section{Introduction}

A set $A \subset F_3^N$ is called a cap set if it contains no lines. In this paper, we will
be concerned with proving the following theorem:

\begin{theorem} \label{main}
There exists an $\epsilon  >0$, and $C < \infty$ such that if $A\subseteq \fn$ is a cap set, then 
\beqa
{{|A|} \over {3^N} } \leq {C \over {N^{1+ \epsilon  } } }.
\eeqa
\end{theorem}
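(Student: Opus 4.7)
The plan is a refined density-increment argument building on Meshulam's classical approach. Meshulam's starting point is the Fourier identity for counting solutions of $x+y+z=0$ with $x,y,z \in A$: it equals $3^N \sum_r \hata(r)^3$, and since $A$ is a cap set the only such solutions are the trivial $x=y=z$, contributing $|A|/3^{2N}$. Isolating the $r=0$ main term $\alpha^3$, where $\alpha = |A|/3^N$, yields
\[ \Bigl| \sum_{r \neq 0} \hata(r)^3 \Bigr| \gtrsim \alpha^3 \]
provided $\alpha \gg 1/N$. Meshulam extracts a single Fourier coefficient of size at least $\alpha^2/2$ and performs density increment on the associated hyperplane, yielding $\alpha \lesssim 1/N$. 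To beat this we must extract more structural information from the same identity.

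The key new step is a quantitative analysis of the large spectrum
\[ \Lambda_\rho = \{ r \in \fn \setminus \{0\} : |\hata(r)| \geq \rho \alpha \}. \]
Chang's inequality puts $\Lambda_\rho$ inside a subspace of dimension $O(\rho^{-2} \log(1/\alpha))$, but density-incrementing on \emph{that} subspace gives essentially no improvement over Meshulam. The idea is to prove that for a cap set the spectrum $\Lambda_\rho$ must carry substantially more additive structure than Chang's $L^2$-based bound guarantees. Cubing the identity and pigeonholing the triple sum $\sum_{\Lambda_\rho} \hata(r_1)\hata(r_2)\hata(r_3)\one_{r_1+r_2+r_3=0}$ should force a lower bound on the additive energy of $\Lambda_\rho$ that is far stronger than what mere $L^2$ mass would yield. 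A Balog--Szemer\'edi--Gowers-type argument, adapted to $\fn$ and combined with the Freiman--Ruzsa structure theorem in characteristic $3$, should then convert this high energy into a large subset of $\Lambda_\rho$ contained in a coset of a subspace of dimension much smaller than Chang predicts.

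Given this improved structure, the density-increment step becomes much more efficient: restricting $A$ to the annihilator of the structured part of $\Lambda_\rho$ produces a density jump of size roughly $(1 + c \alpha^{1-\delta})$ rather than the Meshulam jump $(1 + c \alpha)$, at the price of a codimension slightly larger than one. Iterating this improved increment and carefully tracking how $\alpha$, $\rho$, and the ambient dimension $N$ evolve should convert Meshulam's $1/N$ into $1/N^{1+\epsilon}$ for some universal $\epsilon > 0$. The crux of the proof---and where the real difficulty lies---is precisely the non-trivial improvement on Chang's theorem for cap sets: one must use higher-moment information coming from the cap hypothesis (rather than just $\|\one_A\|_2$), produce a \emph{usable} subspace-like structure, and run the iteration in such a way that the gain per step does not get absorbed by the accumulated codimension loss, so that the improved exponent $\epsilon$ survives all the way through the density-increment tree.
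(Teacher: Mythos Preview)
Your outline captures the broad philosophy---study the additive structure of the large spectrum and feed it into a density increment---but it skips the two ideas that actually make the argument work, and as written the plan would stall.

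First, applying Balog--Szemer\'edi--Gowers directly to the spectrum does not help. For a cap set at density $\alpha\sim N^{-1}$ the spectrum $\Delta$ has size about $N^3$ and (via a H\"older/fourth-moment argument, not cubing) additive energy about $N^7$. The energy ratio is $|\Delta|^3/E_4(\Delta)\sim N^2$, so BSG only produces a subset of small doubling of size $|\Delta|/N^{O(1)}$, and Freiman places it in a subspace of dimension $N^{O(1)}$. That is far too weak to yield your claimed increment $(1+c\alpha^{1-\delta})$; indeed Chang-type information alone recovers nothing beyond Meshulam. The paper's structural theorem is much finer: it shows that a large part of $\Delta$ is covered by $\Lambda+K$ where $K$ is nearly a subgroup of size $\sim N$ and $\Lambda$ is ``spread out'' of size $\sim N^2$. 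This is not a consequence of BSG applied to $\Delta$; it requires the additional input that $\Delta$ is \emph{additively non-smoothing}, i.e.\ $E_8(\Delta)\lesssim N^{15+O(\epsilon)}$, together with a Katz--Koester-style iteration.

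Second, you are missing the mechanism that forces non-smoothing. The paper does not run a single refined increment step; it assumes from the outset that $A$ has \emph{no strong increments} (no subspace of codimension $d\le N/2$ with density exceeding $\rho(1+20d/N)$) and derives a contradiction. This hypothesis has a crucial consequence (Proposition~\ref{first}): $\Delta$ cannot concentrate in any low-dimensional subspace, $|\Delta\cap W|\lesssim (\dim W)\,N^{1+2\epsilon}$. A random-selection argument then shows that if $E_8(\Delta)$ were large, random $d$-subsets of $\Delta$ with $d\sim N^{1-\epsilon}$ would contain nontrivial additive $2m$-tuples, forcing them to have positive nullity---contradicting the anti-concentration bound. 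This dichotomy (smoothing $\Rightarrow$ concentration $\Rightarrow$ strong increment) is entirely absent from your sketch, and without it there is no reason for the spectrum to have the $\Lambda+K$ structure. The endgame is also different from what you describe: rather than a single bigger density jump, one uses Freiman to put $K$ inside a subspace $H$ of dimension $N^{o(1)}$, passes to the fibers of $A$ over $H^\perp$, and runs a second probabilistic argument on spans of random subsets of $\Lambda$ to contradict the no-strong-increments hypothesis.
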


The problem of the maximal size of cap sets is a characteristic 3 model for the problem of finding arithmetic
progressions of length 3 in rather dense sets of integers. Meshulam \cite{M95}  , through a direct use of ideas of Roth,
was able to show that there is a constant $C$ so that any cap set $A$ has density at most ${C \over N}$. Our
result may be viewed as a very modest improvement over Meshulam's result.

Sanders \cite{S11} recently showed that any subset of the integers whose density in $\{1,\dots,M\}$ is at least
${C (\log \log M)^5 \over \log M}$ must contain arithmetic progressions of length 3. This may be thought
of as bringing the results known for arithmetic progressions almost to the level of Meshulam's result. This has
spurred further interest in improving Meshulam's result in hopes that it might suggest a way of improving the
results on arithmetic progressions.

A rather concrete, though perhaps still out of reach, goal in this direction is a conjecture of Erd\"{o}s and Turan:

\begin{conjecture}
Suppose $A\subseteq \Z $ is such that 
\beqa
\sum _{n \in A } {1\over n} = \infty .
\eeqa
Then $A$ contains an arithmetic progression of every length.  
\end{conjecture}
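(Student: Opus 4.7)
The plan is to reduce the conjecture to a sufficiently strong quantitative Szemer\'edi-type statement, and then to attack the resulting density bound by pushing methods of the present paper from $\fn$ into the integer setting. Write $r_k(N)$ for the maximum size of a $k$-AP-free subset of $\{1,\dots,N\}$. It suffices to prove that for every $k \geq 3$ there exist $\epsilon_k, C_k > 0$ with
\beqa
r_k(N) \leq {C_k N \over (\log N)^{1+\epsilon_k}}.
\eeqa
The reduction is a standard dyadic decomposition: given $A$ with no $k$-AP, set $A_j = A \cap [2^j, 2^{j+1})$; each $A_j$ is a $k$-AP-free subset of $\{1,\dots,2^{j+1}\}$, so $|A_j| \leq r_k(2^{j+1}) \lesssim C_k 2^j / j^{1+\epsilon_k}$, and therefore
\beqa
\sum_{n \in A} {1 \over n} \;\leq\; \sum_j {|A_j| \over 2^j} \;\lesssim\; C_k \sum_j {1 \over j^{1+\epsilon_k}} \;<\; \infty,
\eeqa
contradicting divergence of the harmonic sum over $A$.

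The bulk of the work is then in the density bound. For $k = 3$, Sanders \cite{S11} already gives $r_3(M) \leq CM(\log\log M)^5 /\log M$, which misses the target by precisely a power of $\log\log M$ after dyadic summation. The strategy I would pursue is to transfer the structural refinement of Meshulam's argument that underlies Theorem \ref{main} to $\Z$, replacing subspaces by Bohr sets inside a Sanders-type almost-periodicity framework. The additional leverage provided by this paper is the strong combinatorial information on the large spectrum of $\hata$: the dichotomy saying that the large spectrum cannot simultaneously be abundant and additively structured should allow one to save an extra logarithm per density-increment iteration compared with Sanders, ultimately producing a bound $r_3(M) \leq CM / (\log M)^{1+\epsilon}$ sufficient for the $k=3$ case of the conjecture.

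For $k \geq 4$ the obstruction is much more serious, and this is where I expect the main difficulty to lie. Gowers' theorem gives only $r_k(N) \leq CN / (\log\log N)^{c_k}$, which is exponentially weaker than what the reduction demands. Closing this gap would require two new ingredients that are currently out of reach: an inverse theorem for the Gowers $U^{k-1}$-norm with polylogarithmic (rather than iterated-exponential) quantitative dependence, and a density-increment scheme in which each step costs only $O(1)$ logarithms of density even after passing to a higher-order approximately polynomial structure. In short, my honest expectation is that the proposed approach, even after importing every available tool from the present paper, resolves at best the $k=3$ case, while the full Erd\H{o}s--Tur\'an conjecture for $k \geq 4$ must await parallel advances on the higher-order side of the theory.
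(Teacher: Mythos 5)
This statement is an open conjecture, not a theorem of the paper. The paper records it explicitly as a ``rather concrete, though perhaps still out of reach, goal'' and offers no proof; the only claim made is that a Meshulam-type bound of shape $N/(\log N)^{1+\epsilon}$ in the integers would imply the $k=3$ case, which is exactly the dyadic-summation observation you reproduce. There is therefore no argument of the paper's to compare against, and your proposal should not be judged as a blind rediscovery of one.

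Your reduction step is correct and standard: if $r_k(N) \lesssim_k N/(\log N)^{1+\epsilon_k}$ for each $k$, then partitioning $A$ into dyadic blocks $A_j = A\cap[2^j,2^{j+1})$ gives $\sum_{n\in A}1/n \lesssim_k \sum_j j^{-1-\epsilon_k} < \infty$ for any $k$-AP-free $A$, so divergence forces $k$-APs of every length. The genuine gap is that none of the required density bounds is established, either here or in the literature the paper can draw on: the paper proves only the $F_3^N$ analogue (Theorem \ref{main}), Sanders's integer bound for $k=3$ is still off by iterated-logarithm factors, and for $k\geq 4$ Gowers-type bounds are exponentially weaker than what the reduction needs. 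You are candid about all of this, which is to your credit, but it means the proposal is a research plan rather than a proof. One small caution: your claim that the paper's spectral dichotomy should ``save an extra logarithm per iteration'' in a Bohr-set density increment is plausible heuristics, but the obstructions to carrying the $F_3^N$ argument over to $\Z$ (in particular, replacing Freiman's theorem in vector spaces by its much weaker Bohr-set analogue, and controlling the dimension of Bohr sets through the iteration) are precisely the nontrivial content of the Polymath 6 project the paper cites, and should not be presented as a routine transfer.
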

It is clear that the present paper is directly relevant only for finding $3$-term progressions.  However it is 
also easy to see, based purely on density considerations, that proving an estimate of the type in Theorem \ref{main} in 
the integer setting would yield the $3$-term case of the conjecture stated above.  In fact, Polymath 6 \cite{PM6} has 
recently been started with the goal of adapting the ideas of this paper to the integer setting.  See \cite{PM} for more 
information about so-called ``polymath" projects in general.

While the research in this paper was well underway, Gowers \cite{G} wrote a post on his blog suggesting that one could 
attack
the problem of bounding cap sets by studying the additive structure of their large spectrum. This had been
our approach as well and we wrote a reply \cite{K1} sketching our rather strong results regarding that structure. In the
course of a few days, we realized that we actually could convert our structural theory into an estimate on the size of 
cap sets. We recorded this \cite{K2} in a second reply to Gowers's blog. The current paper should be viewed as an 
elaboration of these two posts.

We describe our plan for proving Theorem \ref{main}. To prove this theorem we will prove a theorem about sets 
without unusually dense subspaces, a notion we make precise below.
\begin{definition}\label{incrementdef}
We say a set $A$ has \it no strong increments \rm if for every subspace 
$V \subseteq \fn $ with $d=$ codim$V \leq {N \over 2}$, we have 
\beqa
{{|A \cap V| \over {|V|} } \leq \rho + {{20 d \rho }\over N} }.
\eeqa
\end{definition}

\begin{theorem} \label{noincrements}
There exists an $\epsilon >0$, and $C < \infty$ such that if $A\subseteq \fn$ is a cap set with no strong increments, 
then 
\beqa
{{|A|} \over {3^N} } \leq {C \over {N^{1+ \epsilon} } }.
\eeqa
\end{theorem}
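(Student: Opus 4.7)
My plan is to run the Meshulam-style Fourier analysis for the count of $3$-term progressions in $A$, but to refine the bound on the sum over non-trivial Fourier coefficients using detailed additive-combinatorial information about the large spectrum that the no-strong-increments hypothesis provides.

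The starting point is the Fourier identity for the number of triples $(x,y,z)\in A^3$ with $x+y+z=0$: since $A$ is a cap set only the diagonal contributes, so $\sum_{\xi\neq 0}\hata(\xi)^3 = -\rho^3 + O(3^{-N})$. I would decompose the non-zero frequencies dyadically by setting $\Lambda_k = \{\xi\neq 0 : 2^{-k-1}\rho < |\hata(\xi)|\leq 2^{-k}\rho\}$, so the problem reduces to controlling $\sum_k 2^{-3k}\rho^3 |\Lambda_k|$. Meshulam's original argument extracts the single largest Fourier coefficient, which has magnitude at least $c\rho^2$, and passes to a hyperplane on which $A$ has increased density proportional to $\rho^2$, then iterates; the no-strong-increments hypothesis obstructs this one-shot increment precisely once $\rho\lesssim 1/N$, which is where Meshulam's bound saturates.

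Beyond that threshold I would deploy no-strong-increments as a strong structural constraint on each $\Lambda_k$. Concretely, if $W\subseteq \fn$ has codimension $d\leq N/2$ and many elements of $\Lambda_k$ lie in a single coset $\eta+W^\perp$, then Plancherel on cosets of $W$ produces a density increment of order $2^{-k}\rho \cdot (|\Lambda_k\cap (\eta+W^\perp)|/|W^\perp|)^{1/2}$ on some coset of $W$; for this to respect the $20d\rho/N$ bound from Definition \ref{incrementdef}, $\Lambda_k$ must be forced to spread out across many cosets of every low-codimension subspace. The structural theorem to prove is then a Chang-type statement strengthened by this spreading: namely, that $\Lambda_k$ cannot be efficiently covered by a low-dimensional coset progression, and consequently obeys a lower bound $|\Lambda_k|\gg (2^k/\rho)\cdot N^\delta$ for some $\delta>0$, beating the trivial Parseval bound $|\Lambda_k|\leq 4^k/\rho$.

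Once such a strengthened bound on $|\Lambda_k|$ is available, I would substitute back into $\sum_k 2^{-3k}\rho^3 |\Lambda_k|\approx \rho^3$, balance the dyadic levels against the trivial large-$k$ tail, and read off $\rho\leq C/N^{1+\epsilon}$ for some $\epsilon>0$. The main obstacle is the structural theorem for the large spectrum: converting the rather qualitative ``no coset concentration'' consequence of no-strong-increments into a quantitatively strong bound on additive energy or covering dimension is what is actually hard. This step is presumably where a Chang-style covering lemma, Croot--Sisask almost-periodicity, or an iterated Balog--Szemer\'edi--Gowers argument is deployed, and it is the real technical heart of the paper.
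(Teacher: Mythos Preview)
Your opening moves are fine and match the paper: the $L^3$ identity $\sum_{\xi\neq 0}|\hata(\xi)|^3\gtrsim\rho^3$, the level-set/spectrum picture, and the observation that no-strong-increments forces the spectrum to be spread across cosets of any low-codimension subspace (this is exactly Proposition~\ref{first}). But the mechanism you propose after that point does not work, and it misses the ideas that actually drive the argument.

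First, the numerology of your proposed ``strengthened Chang'' bound goes in the wrong direction. A lower bound $|\Lambda_k|\gg (2^k/\rho)N^\delta$ only contradicts Parseval's $|\Lambda_k|\le 4^k/\rho$ when $2^k\lesssim N^\delta$, whereas the relevant scale for a cap set near density $1/N$ is $2^k\sim 1/\rho\sim N$; at that scale your lower bound is $\sim N^{2+\delta}$ against an upper bound $\sim N^3$, and no contradiction results. More basically, the spreading you extract from no-strong-increments is an \emph{upper} bound on the intersection of the spectrum with low-dimensional subspaces; it does not by itself yield a size lower bound on the spectrum, and nothing like the bound you wrote follows from a Chang-type covering statement.

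What the paper actually does is an additive-energy analysis of the spectrum $\Delta=\{\xi:|\hata(\xi)|\gtrsim\rho^2\}$, of size $\sim N^3$. A H\"older argument on the hyperplane indicators shows $E_4(\Delta)\gtrsim N^{7-O(\epsilon)}$ (Section~\ref{energy}). The crucial new ingredient is that the spreading bound of Proposition~\ref{first}, fed into a \emph{random selection} argument, caps the number of additive $2m$-tuples in $\Delta$ and in particular forces $E_8(\Delta)\lesssim N^{15+O(\epsilon)}$; that is, $\Delta$ is \emph{additively non-smoothing} (Section~\ref{randomsection}). This $E_4$--$E_8$ tension is then exploited by an iterative/Balog--Szemer\'edi--Gowers analysis (Section~\ref{structuresection}) to show that a dense piece of $\Delta$ is covered by $\Lambda+K$ with $K$ almost additively closed of size $\sim N$ and $\Lambda$ of size $\sim N^2$ spread in the sense of Proposition~\ref{first}. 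The contradiction is \emph{not} obtained by substituting back into the $L^3$ identity; instead one uses Freiman to put $K$ in a small subspace $H$, fibers $A$ over $H^\perp$, and shows via a second random-selection/Cauchy--Schwarz argument that the fibers cannot simultaneously satisfy the no-strong-increments bound and carry the required Fourier mass on $\Lambda+H$ (Section~\ref{lastsection}). The non-smoothing concept and the fiber contradiction are the two ideas your sketch is missing entirely.
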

Major ingredients needed to prove this theorem are Proposition \ref{first}, Lemma \ref{smoothing}, and 
Theorem \ref{specstruct}.  We combine them with a Fourier analytic argument in Section \ref{lastsection}.

\begin{proof}
We deduce Theorem \ref{main} from Theorem \ref{noincrements} using induction.  Suppose that for every $n\leq N-1$ we have
shown that if 
$B\subseteq F_3 ^n$ is a cap set, then 
\beqa
{{|B|} \over {3^n} } \leq {C \over {n ^{1+\epsilon}} }.
\eeqa
We aim for a contradiction:  assume there exists a cap set $A \subseteq \fn $ such that 
${{|A|} \over {3^N} } > {C \over {N^{1+ \epsilon  } } }$.  By Theorem \ref{noincrements}, this implies $A$ has a strong 
increment.    
Since $A$ has a strong increment, there exists an affine subspace $V \subseteq \fn$ with codimension $\leq {N \over 2}$ such that 
\beqa
{{ |A \cap V | } \over {|V|}} & \geq & \rho + {{20 d \rho }\over N } \\
& = & \rho (1 + { {20 d } \over N} ) \\
& > &  {C \over {(N-d)^{1+ \epsilon  } } }
\eeqa
since the derivative of ${{C}\over {x ^{1+\epsilon }} }$ is uniformly bounded by $16 C N^{-2-\epsilon} = 16 \rho N^{-1} $ whenever 
$0 < \epsilon < 1 $.  But we know that $A\cap V$ (in fact any subset of A) is a capset.  
This contradicts the induction hypothesis, yielding Theorem \ref{main}.
\end{proof}

\subsection{Proof Sketch}

We sketch our plan for proving Theorem \ref{noincrements}.  

We will study the large spectrum $\Delta$ of a cap set $A$ with no strong increments. The reader should
think of $\Delta$, for a cap set of the size ${3^N \over N}$  given by Meshulam's estimate, as consisting
of the positions at which the absolute value of the Fourier transform of $A$ is around ${1 \over N^2}$. The set
$\Delta$ should have cardinality approximately $N^3$ (established in Section \ref{reviewsection}) and have about 
$N^7$ additive quadruples (established in Section \ref{energy}).
Recall that an additive quadruple is a quadruple $(x_1,x_2,x_3,x_4)$ of elements of $\Delta$ with the
property that 
$$x_1+x_2=x_3+x_4.$$
Similarly an additive octuple is an octuple $(x_1,x_2,x_3,x_4,x_5,x_6,x_7,x_8)$ of elements of $\Delta$ with
$$x_1+x_2+x_3+x_4 =x_5+x_6+x_7+x_8.$$

It is easy to see {\it a priori} that a set with many additive quadruples will have many additive octuples.
In our case, a set with size $N^3$, having $N^7$ quadruples must have at least $N^{15}$ octuples. (But it
may have more octuples.) The number of octuples it has should be taken as an indication of its structure.
If there are many octuples, it means that the sumset $\Delta + \Delta$ looks like it has
more additive structure than the set $\Delta$. We then say the set $\Delta$ is additively smoothing. (It becomes
smoother under addition.) We show, however, that this cannot be the case for $\Delta$, the large spectrum.
We use the probabilistic method to do this, finding too much of the spectrum contained in a small
subspace in the additively smoothing case. We establish this in Section \ref{randomsection}.  
(This is somewhat reminiscent of the paper of Croot and Sisask \cite{CS11}
where random selections are used to uncover structure.)  

Thus our set $\Delta$ is entirely additively non-smoothing. 
This means it is already as smooth as it will
become under a small number of additions. This makes its additive structure particularly easy to uncover
as it is already present without adding the set to itself.  This kind of idea was first exploited
in a paper of the second author with Koester \cite{KK10} , and we use techniques quite similar to those found in that paper.
We end up showing that the set $\Delta$ should be thought of as looking like the sum of a very structured
set $K$ of size $N$ ( that is to say that $K$ is almost additively closed)  with a very random set $\Lambda$
of size $N^2$.  Section \ref{structuresection} contains the proof of a structural theorem for sets with substantial 
additive energy 
(i.e., many additive quadruples) but no additive smoothing.

We find that this structure of $\Delta$ is inconsistent with $A$'s being a cap set with no strong increments.
The reason is that we can use Freiman's theorem to place $K$ inside a subspace $H$ with relatively low dimension.
We can essentially mod out by $H$, examining the ``fibers", the intersections of $A$ with translates of
$H^{\perp}$. We find that the structure of $\Delta$ makes the behavior of the fibers unrealistic. This
argument is suggested by a paper of Sanders \cite{S10}, and is carried out in Section \ref{lastsection}.

One final remark about the value of $\epsilon$ obtained in this paper:  it is necessarily rather small 
(at least with our current argument).  We discuss the reasons for this in a brief final section and give some conjectures that, if true, would greatly improve the efficiency of our argument.
we have not attempted to optimize $\epsilon$ (or even keep track of exact dependence on $\epsilon$)
throughout the paper.  

{\bf Acknowledgements}  The first author is supported by an NSF postdoctoral fellowship, DMS-0902490.  The second author
is partially supported by NSF grant DMS-1001607.

The authors also thank Izabella \L aba and Olof Sisask for pointing out an error in the statement of the asymmetric Balog-Szemeredi-Gowers theorem in a preliminary version of this paper.


\section{Preliminaries} \label{additivelyclosed}

In this section, we record a few general results of which we will make frequent use throughout the paper.
We begin with our favorite form of the Cauchy-Schwarz inequality.

\begin{lemma} \label{cs}  Let $S$ and $T$ be finite sets and $\rho$ a map
$$\rho: S \longrightarrow T.$$
Let $P$ be the set of pairs
$$P=\{ (s_1,s_2): \rho(s_1)=\rho(s_2) \}.$$
Then
$$|P| \geq {|S|^2 \over |T|}.$$
\end{lemma}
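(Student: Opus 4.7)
The plan is to partition $S$ according to the fibers of $\rho$ and then apply Cauchy--Schwarz (or equivalently, the power-mean inequality) to those fiber sizes. For each $t \in T$, let $S_t = \rho^{-1}(t) \subseteq S$, so that the sets $\{S_t\}_{t \in T}$ partition $S$. The key observation is that a pair $(s_1,s_2)$ lies in $P$ if and only if $s_1,s_2$ belong to the same fiber $S_t$, so
\begin{equation*}
|P| = \sum_{t \in T} |S_t|^2, \qquad |S| = \sum_{t \in T} |S_t|.
\end{equation*}

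Next I would apply Cauchy--Schwarz to the vectors $(|S_t|)_{t\in T}$ and $(1)_{t\in T}$, which gives
\begin{equation*}
|S|^2 = \Bigl(\sum_{t \in T} |S_t| \cdot 1\Bigr)^2 \leq \Bigl(\sum_{t \in T} |S_t|^2\Bigr)\Bigl(\sum_{t \in T} 1\Bigr) = |P| \cdot |T|.
\end{equation*}
Rearranging yields exactly $|P| \geq |S|^2/|T|$, as desired.

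There is essentially no obstacle here; the only thing worth noting is that one may equally well view the bound as an instance of convexity: among nonnegative reals $x_t$ with fixed sum $|S|$ distributed over $|T|$ indices, the sum of squares $\sum x_t^2$ is minimized when all $x_t$ are equal to $|S|/|T|$, giving the same lower bound $|S|^2/|T|$. Since the lemma will be used repeatedly as a black box, no further refinement (such as tracking the equality case) seems necessary.
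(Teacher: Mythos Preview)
Your proof is correct and follows essentially the same approach as the paper: express $|P|=\sum_{t\in T}|\rho^{-1}(t)|^2$ and apply Cauchy--Schwarz to compare with $\bigl(\sum_{t\in T}|\rho^{-1}(t)|\bigr)^2=|S|^2$. The paper's version is just slightly terser.
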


\begin{proof} Note that we can express $|P|$ by
$$|P| = \sum_{t \in T}  |\rho^{-1}(t)|^2.$$
Applying the Cauchy-Schwarz inequality we see
$$|P| \geq  {1 \over |T|} (\sum_{t \in T} |\rho^{-1}(t)|)^2 = {|S|^2 \over |T|}.$$
\end{proof}

Here we introduce another variant of Cauchy Schwarz:

\begin{lemma} \label{Carbery} Let $(X,m)$ be a measure space with total measure $M$. Let $A_1,\dots A_k$ be
measurable subsets of $X$ and $0<\rho<1$ be a number (the density), so that $m(A_j) \gtrsim \rho M$ for each $j$.
Suppose $k >> {1 \over \rho}$. Then
$$\sum_{j=1}^k \sum_{l \neq j} m(A_j \cap A_l) \gtrsim k^2 \rho^2 M.$$
\end{lemma}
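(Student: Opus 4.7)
The plan is to prove this by a standard $L^2$ (popularity) argument. Introduce the counting function $f : X \to \mathbb{Z}_{\geq 0}$ defined by $f(x) = \sum_{j=1}^k \mathbf{1}_{A_j}(x)$ and let $S = \int f \, dm = \sum_j m(A_j)$. The density hypothesis gives $S \gtrsim k\rho M$. I want to compare two expressions for $\int f^2 \, dm$: on one hand, Cauchy--Schwarz applied to $f$ and $\mathbf{1}_X$ yields $\int f^2 \, dm \geq (\int f \, dm)^2 / M = S^2/M$; on the other hand, expanding the square directly gives
\[
\int f^2 \, dm = \sum_{j} m(A_j) + \sum_{j=1}^k \sum_{l \neq j} m(A_j \cap A_l) = S + \sum_{j \neq l} m(A_j \cap A_l).
\]
Rearranging, the off-diagonal sum of interest is at least $S^2/M - S = S(S/M - 1)$.

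The only remaining point is to absorb the subtracted $S$, and this is exactly where the hypothesis $k \gg 1/\rho$ enters: it ensures $S/M \gtrsim k\rho \gg 1$, so $S/M - 1 \gtrsim S/M$ and the off-diagonal is $\gtrsim S^2/M \gtrsim k^2 \rho^2 M$, as claimed. There is no real obstacle here; the argument is the standard second-moment/Cauchy--Schwarz bound, and the hypothesis on $k$ is precisely the threshold at which the diagonal contribution to $\int f^2$ becomes negligible against the Cauchy--Schwarz lower bound.
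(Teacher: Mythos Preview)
Your proof is correct and is essentially identical to the paper's: both introduce the counting function $f=\sum_j \mathbf 1_{A_j}$, apply Cauchy--Schwarz to get $\int f^2 \geq (\int f)^2/M \gtrsim k^2\rho^2 M$, and use $k \gg 1/\rho$ to absorb the diagonal $\sum_j m(A_j)$. The only cosmetic difference is the order of presentation---the paper first observes the diagonal is negligible and then bounds the full double sum, whereas you subtract the diagonal at the end.
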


\begin{proof} Note that
$$\sum_{j=1}^k m(A_j) = \rho k M << k^2 \rho^2 M,$$
since $k \rho >> 1$.  Thus we may estimate the full sum
$$\sum_{j=1}^k \sum_{l=1}^k m (A_j \cap A_l).$$
Define $c(x)$ to be the measurable function giving for each $x$, the number of sets $A_j$ which contain $x$; i.e., 
\beqa
c(x) = \sum_{j=1} ^k \one _{A_j } (x).
\eeqa
Thus we would like to estimate
$$\int c(x)^2  \geq {1 \over M} (\int c(x))^2 = k^2 \rho^2 M.$$
\end{proof}

In what follows $\mu$ shall be a small exponent. We will frequently use expressions like
$N^{O(\mu)}$. The exponent will be bounded by $C \mu$ for $C$ a universal constant which varies from
line to line in the paper. We illustrate this by the following version of the large families principle
(this is the principle which says that most children belong to large families)
which will be used extremely often in this paper.

\begin{lemma} \label{largefamilies} Let $M_1, \dots M_K>0$ be real numbers and let $R>0$ be a real number.
Suppose that $M_j \leq R N^{\mu}$ for each $j$ and suppose that
$$\sum_{j=1}^K  M_j \geq  RK N^{-\mu}.$$
Then there exists a subset $J$ of $\{1, \dots , K\}$ with $|J| \gtrsim N^{-O(\mu)} K$ so that
for each $j \in J$, we have $M_j \gtrsim N^{-O(\mu)} R.$
\end{lemma}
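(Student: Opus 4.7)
The plan is to run a standard dyadic/threshold pigeonhole argument. Fix a threshold $T := \tfrac{1}{2} R N^{-2\mu}$ and define
\begin{eqnarray*}
J := \{ j \in \{1,\dots,K\} : M_j \geq T\}.
\end{eqnarray*}
The indices outside $J$ can contribute at most $K \cdot T = \tfrac{1}{2} R K N^{-2\mu}$ to the total sum, which, for $N>1$, is strictly smaller than $\tfrac{1}{2}RK N^{-\mu}$. Combining with the hypothesis $\sum_j M_j \geq R K N^{-\mu}$, I would conclude
\begin{eqnarray*}
\sum_{j \in J} M_j \geq \tfrac{1}{2} R K N^{-\mu}.
\end{eqnarray*}

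Now I would use the upper bound $M_j \leq R N^{\mu}$ for $j \in J$ to control $|J|$ from below:
\begin{eqnarray*}
|J| \cdot R N^{\mu} \geq \sum_{j \in J} M_j \geq \tfrac{1}{2} R K N^{-\mu},
\end{eqnarray*}
so $|J| \geq \tfrac{1}{2} K N^{-2\mu}$. Since by construction every $j \in J$ satisfies $M_j \geq \tfrac{1}{2} R N^{-2\mu}$, both conclusions hold with exponent constant $2$ absorbed into the $O(\mu)$ notation.

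There is no real obstacle here: the lemma is essentially a bookkeeping statement that says if the average is $RN^{-\mu}$ and the maximum is at most $RN^{\mu}$, then a polynomial (in $N^{\mu}$) fraction of the terms must already be within a polynomial (in $N^{\mu}$) factor of $R$. The only point of care is to choose the exponent in the threshold (here $-2\mu$) strictly more negative than the exponent in the hypothesis ($-\mu$) so that the small terms truly are negligible, and to verify that the same exponent works simultaneously for the lower bound on $|J|$ and the lower bound on each $M_j$; both amount to losses of the form $N^{-O(\mu)}$, which is all that is claimed.
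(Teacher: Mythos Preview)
Your proposal is correct and follows essentially the same approach as the paper: both pick a threshold of the form $R N^{-c\mu}$, bound the contribution of the sub-threshold terms by a small fraction of the total, and then use the pointwise upper bound $M_j \leq R N^{\mu}$ to force $|J| \gtrsim N^{-O(\mu)} K$. The only cosmetic differences are that the paper argues by contradiction (assuming $|J|$ is small and deriving that the total sum is too small) and uses the looser constant $10$ in place of your $2$.
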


\begin{proof} Let 
$$J=\{j: M_j \gtrsim N^{-10 \mu} R \}.$$
Suppose that $|J| \lesssim N^{-10 \mu} K$. Then by the upper bound on $M_j$, we have that
$$\sum_{j \in J} M_j \lesssim N^{-9 \mu} R K,$$
while
$$\sum_{j \notin J} M_j \lesssim N^{-10 \mu} R K.$$
Combining these two estimates gives us a contradiction.
\end{proof}

We take a moment to state the asymmetric Balog-Szemeredi-Gowers theorem which we will have occasion to use.
A set $B \subset F_3^N$ will be said to be $\mu$-additively closed if
$$|B-B| \lesssim N^{O(\mu)}  |B|.$$

\begin{lemma} \label{absg} Let $B,C \subset F_3^N$ so that there are at least $N^{-\eta} |B| |C|^2$ additive
quadruples of the form
$$b_1+c_1=b_2+c_2$$
with $b_1,b_2 \in B$ and $c_1,c_2 \in C$. Let $L = { |B| \over |C|}$, and assume $L \lesssim N^{10}$.  Then there 
exists $\mu$ depending only on $\eta$, with $\mu \rightarrow 0$ as $\eta \rightarrow 0$, and there exist 
an $\mu$-additively closed set $K \subset F_3^N$ and a set
$X \subset F_3^N$ with 
$$|X| \lesssim  N^{O(\mu)} {|B| \over |C|},$$
so that
$$|B \cap (X+K)| \gtrsim N^{-O(\mu)} |B|,$$
and an element $x \in F_3^N$ so that
$$|C \cap (x+K)| \gtrsim N^{-O(\mu)} |C|.$$
In particular, the last inequality implies
$$|K| \gtrsim N^{-O(\mu)} |C|.$$
\end{lemma}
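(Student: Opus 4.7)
The plan is to apply the asymmetric Balog--Szemer\'edi--Gowers machinery via Gowers's bipartite graph argument, carefully tracking how the roles of $B$ and $C$ enter. First, the hypothesis translates into an energy statement: with $r(d) = |\{(b,c) \in B \times C : b - c = d\}|$, the quadruple count equals $\sum_d r(d)^2 \gtrsim N^{-\eta}|B||C|^2$. Since $\sum_d r(d) = |B||C|$ and $r(d) \leq |C|$, a pigeonhole invocation of Lemma \ref{largefamilies} isolates a set $D$ of popular differences, each with $r(d) \gtrsim N^{-O(\eta)}|C|$, satisfying $|D| \gtrsim N^{-O(\eta)}|B|$ and carrying almost all the energy. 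The bipartite graph $\Gamma = \{(b,c) \in B \times C : b-c \in D\} \subseteq B \times C$ then has $|\Gamma| \gtrsim N^{-O(\eta)}|B||C|$ edges.

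Next, I refine via Gowers's path argument. Discarding low-degree vertices yields $B' \subseteq B$ and $C' \subseteq C$ of sizes $\gtrsim N^{-O(\eta)}|B|$ and $\gtrsim N^{-O(\eta)}|C|$ respectively, in which every $c \in C'$ has $\Gamma$-degree $\gtrsim N^{-O(\eta)}|B|$. Path-counting via Lemma \ref{cs} shows that most pairs $(c_1, c_2) \in C' \times C'$ are joined by many length-two paths through $B'$, so each such $c_1 - c_2$ admits many representations as $(b - c_2) - (b - c_1)$ with both summands in $D$. Iterating to length-four paths, we upgrade this to the small-doubling bound $|C' - C'| \lesssim N^{O(\eta)}|C'|$ via Pl\"unnecke--Ruzsa applied on the $C'$-side; the assumption $L \lesssim N^{10}$ enters here to prevent polynomial factors in $L$ from infecting the $C'$-doubling exponent.

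I then set $K = C' - C'$. Pl\"unnecke--Ruzsa gives $|K - K| = |2C' - 2C'| \lesssim N^{O(\eta)}|C'| \leq N^{O(\eta)}|K|$, so $K$ is $O(\eta)$-additively closed, and $|K| \gtrsim |C'| \gtrsim N^{-O(\eta)}|C|$. Picking any $x \in C'$ gives $C' \subseteq x + K$, establishing the conclusion for $C$ (the deficit $C \setminus C'$ is absorbed into the $N^{-O(\mu)}$ factor). For $B$: each $b \in B'$ has many $\Gamma$-connections to $C' \subseteq x + K$, so another application of Pl\"unnecke--Ruzsa gives $|B' + K| \lesssim N^{O(\eta)}|B|$; the Ruzsa covering lemma then produces a set $X$ with $|X| \lesssim |B' + K|/|K| \lesssim N^{O(\eta)} |B|/|C|$ such that $B' \subseteq X + K$, giving the required covering.

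The main obstacle is the asymmetric path-counting step: obtaining $|C' - C'| \lesssim N^{O(\eta)}|C'|$ (rather than the weaker, almost trivial $\lesssim N^{O(\eta)}|B|$) requires conditioning the path counts entirely on the $C'$-side and invoking the asymmetric version of Pl\"unnecke--Ruzsa. This is precisely where the hypothesis $L \lesssim N^{10}$ is used: it guarantees that the polynomial factors in $L$ that arise from passing between bounds normalized by $|B|$ and bounds normalized by $|C|$ can be absorbed into the $N^{O(\mu)}$ error term, preserving the property that $\mu \to 0$ as $\eta \to 0$.
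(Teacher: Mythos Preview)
The paper does not actually supply a proof of this lemma; it simply cites Lemma~2.35 of Tao--Vu~\cite{TV06} (``An only slightly stronger form of this lemma appears in the book of Tao and Vu as Lemma 2.35''). So you have written considerably more than the authors did, and your outline is broadly the standard one: pass to popular differences, build the dense bipartite graph, run the Gowers path-counting, and finish with Pl\"unnecke--Ruzsa and Ruzsa covering.

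That said, there is one point in your sketch that is not quite right and deserves care. You say the hypothesis $L \lesssim N^{10}$ is used ``to prevent polynomial factors in $L$ from infecting the $C'$-doubling exponent,'' and later that such factors ``can be absorbed into the $N^{O(\mu)}$ error term.'' But if the $C'$-doubling bound genuinely carried a factor $L^{O(1)}$ with a fixed positive exponent, then under $L \sim N^{10}$ this would contribute $N^{O(1)}$, which certainly \emph{cannot} be absorbed into $N^{O(\mu)}$ with $\mu \to 0$ as $\eta \to 0$. The whole point of the asymmetric version (as opposed to, say, deducing $E(C) \gtrsim N^{-2\eta}|C|^3/L$ by Cauchy--Schwarz and applying symmetric BSG to $C$, which \emph{does} produce an unwanted $L^{O(1)}$) is that the near-maximal energy hypothesis $E(B,C) \gtrsim N^{-\eta}|B||C|^2$ is strong enough to yield $|C'' - C''| \lesssim N^{O(\eta)}|C|$ with no $L$-dependence at all in the exponent. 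The path argument, done carefully on the $C$-side, already delivers this. So your overall architecture is fine, but your diagnosis of where and how $L \lesssim N^{10}$ enters is off; in the Tao--Vu formulation the polynomial bound on $L$ plays a more minor bookkeeping role (and may in part reflect the correction alluded to in the paper's acknowledgments) rather than rescuing a genuinely $L$-contaminated doubling estimate.
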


An only slightly stronger form of this lemma appears in the book of Tao and Vu as Lemma 2.35.  \cite{TV06}
Another way of stating this result which we will use frequently is to define a function $f$ with
$\lim_{t \longrightarrow 0} f(t)=0$ and to let $\mu=f(\eta)$. We will use this kind of notation
frequently in the paper with the choice of $f$ varying from line to line.

Finally, we record the form of Freiman's theorem which we shall use.

\begin{theorem} \label{Freiman} A $\mu$ additively closed set is contained in a subspace of dimension
$N^{O(\mu)}$. \end{theorem}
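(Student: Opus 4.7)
The plan is to treat this as a standard application of the Freiman--Ruzsa machinery in $F_3^N$. By hypothesis $B$ has doubling constant $K := N^{O(\mu)}$, i.e.\ $|B - B| \leq K|B|$.

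First I would invoke the Pl\"unnecke--Ruzsa inequality to control iterated sumsets: $|kB - \ell B| \leq K^{k+\ell}|B|$, so for any fixed $k,\ell$ this is still $N^{O(\mu)}|B|$. Next I would apply Ruzsa's subspace-containment theorem in $F_3^N$ (for instance, Proposition 5.33 in Tao--Vu, which is the companion to the Lemma 2.35 already invoked in this paper): any set of doubling at most $K$ in $F_3^N$ is contained in an affine subspace $V$ satisfying
\beqa
|V| \leq CK^{2}\cdot 3^{O(K^{2})}\,|B|.
\eeqa
The standard proof is to cover $B - B$ by $O(K^{2})$ translates of $B$ via the Ruzsa covering lemma and then pass to the linear span of the translation vectors; because the ambient field is $F_{3}$, this span is automatically a subspace, and its size is controlled by the number of generators together with $|B|$. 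Taking $\log_{3}$ gives $\dim V \leq N^{O(\mu)} + \log_{3}|B|$.

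In every application of the theorem made later in the paper the relevant set has size polynomial in $N$ (e.g.\ the structured set $K$ of size about $N$ referenced in the proof sketch), so $\log_{3}|B| = O(\log N)$ is comfortably absorbed into the $N^{O(\mu)}$ error, yielding $\dim V \leq N^{O(\mu)}$ as claimed. The main source of looseness --- not really a substantive obstacle but worth flagging --- is the $3^{O(K^{2})}$ in Ruzsa's bound; one could replace it with Green--Ruzsa's $3^{O(K \log K)}$ or Sanders's near-square-root bound to obtain a better quantitative dependence, but for the qualitative $N^{O(\mu)}$ statement any of these refinements suffices.
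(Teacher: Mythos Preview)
Your approach matches the paper's: the paper does not give a proof at all but simply cites Ruzsa's finite-field Freiman theorem \cite{R99} (noting that the sharper bounds of Sanders \cite{S08} are not needed), which is exactly the black box you invoke and then unpack via Pl\"unnecke--Ruzsa plus the covering lemma. You are also right to flag that the literal statement tacitly assumes $\log_3 |B| \leq N^{O(\mu)}$, since Ruzsa's bound on the containing subspace contributes a $\log_3|B|$ term to the dimension; the paper leaves this implicit, but in every application the relevant sets have size polynomial in $N$ (indeed $\lesssim N^{3+O(\epsilon)}$), so the caveat is harmless.
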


Various improvements of the finite characteristic Freiman's theorem have occured  such as the
result by Sanders \cite{S08} but these only affect the constant in our formulation. Even Ruzsa's original version 
\cite{R99} suffices.


\section{Review of Meshulam's argument, Fourier analysis in $F_3^N$, and sparsity of the spectrum} \label{reviewsection}

For the remainder of this paper $A$ will be a subset of $F_3^N$ with $|A| = \rho 3^N >> 
{3^N \over N^{1+\epsilon}}$
with $\epsilon>0$ to be determined later. Moreover $A$ shall be a cap set meaning that it 
contains no
lines. A line in $F_3^N$ is characterized by being a set with exactly three distinct 
elements $a,b,c$
satisfying $a+b+c=0$. 

In this section we will establish some basic facts needed for our proof, and that are enough to obtain 
Meshulam's bound of $\sim {1\over N}$  on the density of capsets.  Further, we shall
prove a statement of the form ``The spectrum $\Delta $ does not have too much intersection with 
any small subspace."

We will assume that there are no strong increments for $A$ in the sense of Definition \ref {incrementdef}. 
Precisely, we assume there is 
no hyperplane $H$ so that
$A \cap H$ has density $\geq \rho + {20 \rho \over N}$ in $H$ and no subspace $H$ of 
codimension $d \leq  {N \over 2}$
so that $A \cap H$ has density $\geq  \rho + {20 \rho d \over N}$.  We recall that a contradiction of 
this assumption will mean
that every large cap set $A$ has strong increments. This will contradict the existence of 
large cap sets.

We define the character $e: F_3 \longrightarrow {\bf C}$ by
$e(0)=1,e(1) =-{1 \over 2} +{\sqrt{3} \over 2} i, e(2)=-{1 \over 2} - {\sqrt{3} \over 2} 
i.$
We will study the Fourier transform of the set $A$, namely
$$\hat A(x) = {1 \over 3^N} \sum_{a \in A}  e(a \cdot x).$$
As a consequence of the assumption that $A$ is a large cap set without strong increments,
we shall see that there is a significant set $\Delta$ of $x$ for which $|\hat A(x)|$ is fairly 
large (the set $\Delta$ will
be called the spectrum of $A$) and we shall see that $\Delta$ does not concentrate too 
much in any fairly low
dimensional subspace of $F_3^N$. 
Our first nontrivial fact about $\hat{A}$ is that $\widehat{A - \rho }$ has large $L^3$ norm, and moreover that this 
large $L^3$ norm is accounted for by the set of $x$ where $|\hat{A}(x)|$ is large.  Precisely: 
\begin{definition}\label{spectrumdef}
Define 
\beqa
\Delta = \{ x \neq 0 \colon | \hat{A} (x) | \gtrsim \rho ^2 \}.
\eeqa
We shall refer to the set $\Delta$ as the \it spectrum \rm of $A$ and it shall be our central object of study for the
remainder of the paper. 
\end{definition}
Note that with this definition, $\Delta$ is symmetric, that is
$$\Delta=-\Delta.$$

It is worth noting that the following proposition is the only place in the paper where we use the assumption 
that $A$ is a capset specifically; the other parts of the paper use only the assumption that $A$ has no strong increments.
\begin{proposition} \label{size}
If $A$ is a capset, then 
\beqa
\sum _{x \neq 0} |\hat{A}(x) |^3 \gtrsim \rho ^3. 
\eeqa
Moreover,
\beqa
\sum _{x \in \Delta } |\hat{A}(x) |^3 \gtrsim \rho ^3,
\eeqa
and 
\beqa
N^3 \lesssim |\Delta | \lesssim N^{3 + 3\epsilon }.
\eeqa
\end{proposition}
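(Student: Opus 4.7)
The proof follows the standard Meshulam Fourier analysis, augmented by the no-strong-increments hypothesis at the last step to pin $|\Delta|$ down from below. Everything starts from the Fourier identity
\[
\sum_x \hat A(x)^3 \;=\; \frac{1}{3^{2N}}\,\#\{(a,b,c)\in A^3:a+b+c=0\},
\]
which comes from Fourier inversion. The cap set condition forces the only solutions of $a+b+c=0$ in $A^3$ to be the trivial diagonal $a=b=c$, giving $|A|$ triples, so the right-hand side equals $\rho/3^N$. Peeling off $\hat A(0)^3=\rho^3$ yields $\sum_{x\neq 0}\hat A(x)^3=\rho/3^N-\rho^3$, which is dominated by $-\rho^3$ since $\rho\gg 3^{-N/2}$; taking absolute values proves $\sum_{x\neq 0}|\hat A(x)|^3\gtrsim\rho^3$.

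To transfer this mass onto $\Delta$, I would fix the implicit constant in Definition \ref{spectrumdef} to be $c\rho^2$ with $c$ sufficiently small. Pulling out one factor of $|\hat A|$ on the complement and applying Plancherel gives
\[
\sum_{x\notin\Delta,\,x\neq 0}|\hat A(x)|^3\;\leq\;c\rho^2\sum_x|\hat A(x)|^2\;=\;c\rho^3,
\]
which is at most half the total for $c$ small, so the remaining $\gtrsim\rho^3$ must live on $\Delta$. The upper bound on $|\Delta|$ then drops out immediately: the lower bound $|\hat A|\gtrsim\rho^2$ on $\Delta$ combined with Plancherel yields $|\Delta|\rho^4\lesssim\sum_x|\hat A(x)|^2=\rho$, so $|\Delta|\lesssim\rho^{-3}\lesssim N^{3+3\epsilon}$, using the assumption $\rho\gg N^{-1-\epsilon}$.

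The lower bound $|\Delta|\gtrsim N^3$ is the only step that invokes the no-strong-increments hypothesis. By the standard Meshulam density-increment argument, any $x\neq 0$ produces an affine coset of $\{y:y\cdot x=0\}$ on which $A$ has density at least $\rho+c'|\hat A(x)|$ for an absolute constant $c'>0$ (the elementary input being that if three coset densities $\rho+\beta_0,\rho+\beta_1,\rho+\beta_2$ sum to $3\rho$ and satisfy $3|\hat A(x)|=|\beta_0+\omega\beta_1+\omega^2\beta_2|$, then the zero-sum constraint forces some $\beta_j\gtrsim |\hat A(x)|$). Forbidding any increment past $\rho+20\rho/N$ therefore forces $|\hat A(x)|\lesssim\rho/N$ uniformly on $\{x\neq 0\}$. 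Substituting this bound into the $L^3$ mass on $\Delta$,
\[
\rho^3\;\lesssim\;\sum_{x\in\Delta}|\hat A(x)|^3\;\leq\;|\Delta|\cdot\bigl(\max_{x\neq 0}|\hat A(x)|\bigr)^3\;\lesssim\;|\Delta|\cdot\rho^3/N^3,
\]
yielding $|\Delta|\gtrsim N^3$. The only delicate point is to align the constant in the density-increment step with the factor $20$ in Definition \ref{incrementdef}; everything else is routine Plancherel and triangle-inequality bookkeeping.
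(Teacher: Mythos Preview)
Your proof is correct and follows essentially the same route as the paper: the Fourier identity for the triple sum, the cap set condition killing nontrivial solutions, peeling off the zero mode to get the $L^3$ lower bound, Plancherel to bound the off-$\Delta$ contribution and the upper bound on $|\Delta|$, and the no-strong-increments hypothesis to force $|\hat A(x)|\lesssim\rho/N$ and hence $|\Delta|\gtrsim N^3$. The paper presents these steps in the same order with the same ingredients; your write-up just makes the density-increment-to-Fourier-bound implication a bit more explicit.
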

Note that this proposition is already enough to obtain Meshulam's estimate:  in particular, it guarantees that the 
set $\Delta$ defined in the statement of the proposition is nonempty.  This means there is at least one $x$ such 
that $|\widehat{A} (x) | \gtrsim \rho ^2 $.  This guarantees the existence of a hyperplane $P$ such that the
density of $A\cap P$ inside $P$ is at least $ \rho + c\rho ^2 $.  Taking $\rho$ large enough compared to ${1\over N}$ 
already contradicts the no-strong-increment hypothesis, yielding Meshulam's estimate.  

In what follows, we prove this proposition.  We consider
$$\sum_{x \in F_3^N} \hat  A(x)^3 = {1 \over 3^{3N}} 
\sum_{x \in F_3^N} \sum_{a \in A} \sum_{b \in A} \sum_{c \in A} e((a+b+c) \cdot x).$$
Summing first in $x$, we see that this expression yields $3^{-2N}$ mutiplied by the number of solutions of the
equation $a+b+c=0$ with $a,b,c$ taken from $A$. Since we have assumed that $A$ is a cap set, the only solutions
occur when $a=b=c$.  Thus
$$\sum_{x \in F_3^N} \hat A(x)^3= 3^{-2N} |A|= 3^{-N} \rho.$$
However, we observe that $\hat A(0)= \rho$. Thus, given the size of $A$, we see that $\rho^3$ dominates $3^{-N} \rho$ and
we conclude
\begin{equation} \label{L3} \sum_{x \neq 0} |\hat A(x)|^3 \gtrsim  \rho^3.\end{equation}

However, following the proof of Plancherel's inequality, we see that
$$\sum_{x \neq 0} |\hat A(x)|^2 \leq \sum_{x \in F_3^N} |\hat A(x)|^2 = 
{1 \over 3^{2N}} \sum_{x \in F_3^N} \sum_{a \in A} \sum_{b \in B} 
e((a-b) \cdot x).$$
Summing first in $x$, we conclude
\begin{equation} \label{L2}\sum_{x \neq 0} |\hat A(x)|^2 \leq  3^{-N} |A| \leq  \rho. \end{equation}

By the assumption that $A$ has no strong codimension 1 increment, we conclude
\begin{equation} \label{inc1} |\hat A(x)| \lesssim {|A| \over N 3^N}  = {\rho  \over N}. \end{equation}
Recall that 
$$\Delta =\{ x \neq 0 : | \hat A(x)| \gtrsim  \rho^2 \}.$$
By selecting the implicit constant in the definition of $\Delta$ correctly, we see combining inequalities \ref{L3}
and \ref{L2} that
\begin{equation} \label{L3spec} \sum_{x \in \Delta} |\hat A(x)|^3 \gtrsim \rho^3 . \end{equation}

Combining inequalities \ref{inc1} and \ref{L3spec}, we see that
$$|\Delta| \gtrsim N^3.$$

Combining the definition of $\Delta$ with \ref{L2} we get
$$|\Delta| \lesssim N^{3+3 \epsilon}.$$

Now we prove a statement of the form ``The spectrum $\Delta $ does not have too much intersection with 
any small subspace."  Precisely:

\begin{proposition} \label{first} Let $A$ be a set without strong increments. 
Let $\Delta$ be the spectrum, as in Definition \ref{spectrumdef}.
Then for any subspace $W$ of $F_3^N$ having dimension $d \leq {N \over 2}$, we have
$$|\Delta \cap W| \lesssim  d N^{1 + 2 \epsilon}.$$
Moreover for such a subspace $W$, we have the estimate 
$$ \sum_{w \neq 0  \in W} |\hat A(w)|^2 \lesssim  \rho^2 {d \over N} .$$
\end{proposition}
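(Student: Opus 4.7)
My plan is to prove the second estimate first, since the first one follows easily from it together with the definition of the spectrum. The key idea is to recognize that the Fourier coefficients of $\one_A$ at points of $W$ are determined by the distribution of $A$ across cosets of $W^{\perp}$, which is exactly what the no-strong-increment hypothesis controls.

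Concretely, fix a subspace $W$ of dimension $d \leq N/2$, and for each coset $v + W^{\perp}$ of $W^{\perp}$ (these are $3^d$ cosets of size $3^{N-d}$) set $p_v = |A \cap (v + W^{\perp})|/3^{N-d}$, so that $p_v$ is the density of $A$ in that coset and the average of $p_v$ is $\rho$. Since $w \in W$ makes the character $a \mapsto e(a \cdot w)$ constant on each coset of $W^{\perp}$, a direct computation gives
\beqa
\hat A(w) \;=\; \frac{1}{3^d} \sum_v e(v \cdot w) \, p_v,
\eeqa
so $\hat A|_W$ is (up to normalization) the Fourier transform of the function $v \mapsto p_v$ on the quotient group $F_3^N / W^{\perp} \cong F_3^d$. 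Plancherel on the quotient then yields
\beqa
\sum_{w \in W} |\hat A(w)|^2 \;=\; \frac{1}{3^d} \sum_v p_v^2,
\eeqa
and subtracting off the contribution of $w = 0$ (which is $\rho^2$) turns the right side into the variance $\frac{1}{3^d} \sum_v (p_v - \rho)^2$.

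The one place where a real inequality enters is the bound on this variance, and here is where I expect to do the one nontrivial step. The no-strong-increment hypothesis gives only an upper bound, $p_v \leq \rho(1 + 20d/N)$, so $(p_v - \rho)_+ \leq 20 d \rho / N$. However, $\sum_v (p_v - \rho) = 0$ forces the total positive deviation to equal the total negative deviation, which yields the $L^1$ bound $\sum_v |p_v - \rho| \leq 2 \cdot 20 d \rho / N \cdot 3^d$. Combining this with the trivial $L^{\infty}$ bound $|p_v - \rho| \leq \rho$ gives
\beqa
\frac{1}{3^d} \sum_v (p_v - \rho)^2 \;\leq\; \rho \cdot \frac{40 d \rho}{N} \;\lesssim\; \frac{d \rho^2}{N},
\eeqa
which is the second estimate of the proposition.

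For the first estimate I just unpack the definition of $\Delta$. For each $w \in \Delta \cap W$ we have $|\hat A(w)| \gtrsim \rho^2$, so $|\hat A(w)|^2 \gtrsim \rho^4$. Summing and using the bound just proved,
\beqa
|\Delta \cap W| \cdot \rho^4 \;\lesssim\; \sum_{w \neq 0,\, w \in W} |\hat A(w)|^2 \;\lesssim\; \frac{d \rho^2}{N},
\eeqa
whence $|\Delta \cap W| \lesssim d/(N \rho^2)$. Since $\rho \geq c/N^{1+\epsilon}$ by assumption, $1/\rho^2 \lesssim N^{2 + 2\epsilon}$ and the desired bound $|\Delta \cap W| \lesssim d N^{1 + 2\epsilon}$ follows. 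The only step that is not a direct calculation is the variance bound; the rest is Plancherel on the quotient group plus the spectrum definition.
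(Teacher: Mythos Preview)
Your proposal is correct and follows essentially the same route as the paper: express $\sum_{w\neq 0\in W}|\hat A(w)|^2$ via Plancherel on the quotient as the variance of the coset densities $p_v$, use the no-strong-increment hypothesis to bound the positive deviations pointwise, transfer this to an $L^1$ bound on the negative deviations via $\sum_v (p_v-\rho)=0$, and finish with the trivial $L^\infty$ bound $|p_v-\rho|\le\rho$; then read off the spectrum bound from $|\hat A(w)|\gtrsim\rho^2$ on $\Delta$. The only cosmetic difference is that the paper splits $V=V^+\cup V^-$ and bounds each piece separately (obtaining the slightly sharper $d^2/N^2$ on $V^+$), whereas you apply a single $L^1\times L^\infty$ estimate; since the $V^-$ contribution dominates anyway, the end result is identical.
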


Here we see what the assumption of no higher codimension strong increments implies about the spectrum $\Delta$. Let
$H$ be a subspace with codimension $d<{N \over 2}$, we let $V$ be a dimension $d$ subspace which is
transverse to $H$ (i.e., $V+ H = \fn $)
and we let $W$ be the annihilator space of $H$. Then for any $w \neq 0 \in W$, we see that
$$\hat A(w)={1 \over 3^N}\sum_{v \in V}  ( |A \cap (H+v)| - 3^{-d} |A|) e(v \cdot w).$$

Then we have
\begin{equation} \label{Planch} \sum_{w \neq 0  \in W}
|\hat A(w)|^2 = 3^{d-2N}  \sum_{v \in V} ( |A \cap (H+v)| - 3^{-d} |A|)^2. \end{equation}
By getting an upper bound on the right hand side of equation \ref{Planch}, we can obtain an upper bound on 
$|\Delta \cap W|$, which is our goal.

To estimate the right hand side, we subdivide $V=V^+ \cup V^{-}$, where
$$V^+ = \{v : |A \cap (H+v)| - 3^{-d} |A| \geq 0\}$$
and
$$V^{-} = \{v : |A \cap (H+v)| - 3^{-d} |A| < 0\}$$

We observe that since 
$$\sum_{v \in V}   (|A \cap (H+v)| - 3^{-d} |A|) = 0,$$
we have
\begin{equation} \label{equal} 
\sum_{v \in V^+} | |A \cap (H+v)| - 3^{-d} |A| | = \sum_{v \in V^{-}} | |A \cap (H+v)| - 3^{-d} |A| |. \end{equation}

We observe that the hypothesis that $A$ has no strong increments implies that for $v \in V^+$, we have the
estimate
$$ | |A \cap (H+v)| - 3^{-d} |A| | \lesssim 3^{-d} |A|  {d \over N}.$$

Thus simply using that $|V|=3^d$, we get the estimates

\begin{equation} \label{L1+} \sum_{v \in V^+} | |A \cap (H+v)| - 3^{-d} |A| | \lesssim |A| {d \over N} \end{equation}

and

\begin{equation} \label{L2+} \sum_{v \in V^+} | |A \cap (H+v)| - 3^{-d} |A| |^2 \lesssim 3^{-d} |A|^2 {d^2 \over N^2} 
\end{equation}

Now for $v \in V^{-}$, we have the trivial estimate
$$ | |A \cap (H+v)| - 3^{-d} |A| | \leq 3^{-d} |A|.$$

In light of equation \ref{equal} and estimate \ref{L1+} this yields

\begin{equation} \label{L2-} \sum_{v \in V^{-}} | |A \cap (H+v)| - 3^{-d} |A| |^2 \lesssim 3^{-d} |A|^2 {d \over N} 
\end{equation}

Combining inequalities \ref{L2+} and \ref{L2-}, gives the estimate
$$ \sum_{v \in V} | |A \cap (H+v)| - 3^{-d} |A| |^2 \lesssim 3^{-d} |A|^2 {d \over N} .$$

Thus equation \ref{Planch} gives

$$ \sum_{w \neq 0  \in W} |\hat A(w)|^2 \lesssim  \rho^2 {d \over N} .$$

However, we recall that if $w \in \Delta$, we have that 
$$|\hat A(w)| \gtrsim \rho ^2 = { 1 \over N^{2+ 2\epsilon} }.$$

Thus we get the desired estimate:
$$|\Delta \cap W| \lesssim  d N^{1 + 2 \epsilon}.$$

\section{Additive structure in the spectrum of large cap sets} \label{energy}
In this section we establish that the spectrum has some nontrivial additive structure.  Specifically, we prove it has 
$N^{7 - O(\epsilon)}$ additive quadruples.

\begin{proposition} \label{partialstructure} Let $A$ be a large cap set. Let $\Delta$ be the spectrum of $A$
and let $\Delta^{\prime}$ be any symmetric subset of $\Delta$ with 
$$|\Delta^{\prime}| \geq {1 \over 2} |\Delta|.$$
Let $E_4(\Delta^{\prime})$ be the number of additive quadruplets
$x_1+x_2=x_3+x_4$ with $x_1,x_2,x_3,x_4 \in \Delta^{\prime}$. Then
$$E(\Delta^{\prime}) \gtrsim {|\Delta|^4 \over N^{5 + 5 \epsilon}}.$$
\end{proposition}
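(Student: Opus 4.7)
The plan is to set up a single linear quantity $W$ built from $\Delta'$ and bound it in two different ways: a simple lower bound from the definition of the spectrum, and a restriction-style upper bound that already has $E_4(\Delta')$ built into it. Let
\beqa
W = \sum_{x \in \Delta'} |\hat A(x)|.
\eeqa
Because $\Delta' \subseteq \Delta$, the definition of $\Delta$ forces $|\hat A(x)| \gtrsim \rho^2$ on the range of summation, so $W \gtrsim \rho^2 |\Delta'| \gtrsim \rho^2 |\Delta|$. Everything that follows is devoted to bounding $W$ from above.

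To expose the additive structure of $\Delta'$, introduce a unimodular weight $\sigma : \fn \to \mathbb{C}$ with $|\sigma(x)| \leq 1$ and $\sigma(x)\,\hat A(x) = |\hat A(x)|$. Expanding $\hat A(x) = 3^{-N} \sum_{a \in A} e(a \cdot x)$ and interchanging summations yields
\beqa
W = \sum_{x \in \Delta'} \sigma(x)\,\hat A(x) = \frac{1}{3^N} \sum_{a \in A} F(a), \quad \text{where} \quad F(a) := \sum_{x \in \Delta'} \sigma(x)\, e(a \cdot x).
\eeqa
Applying H\"older's inequality with exponents $4/3$ and $4$ to this representation of $W$ produces
\beqa
|W|^4 \leq \frac{|A|^3}{3^{4N}} \sum_{a \in \fn} |F(a)|^4.
\eeqa

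The decisive observation is that $\sum_a |F(a)|^4$ is controlled by the additive energy of $\Delta'$. Writing $|F(a)|^4 = F(a)^2 \overline{F(a)^2}$, distributing, and summing over $a \in \fn$, the orthogonality relation $\sum_a e(v \cdot a) = 3^N \mathbf{1}_{v=0}$ collapses the sum to $3^N$ times
$$\sum_{\substack{x_1,x_2,x_3,x_4 \in \Delta' \\ x_1+x_2 = x_3+x_4}} \sigma(x_1)\sigma(x_2)\overline{\sigma(x_3)\sigma(x_4)},$$
which, since $|\sigma| \leq 1$, is in absolute value at most $E_4(\Delta')$ by the triangle inequality. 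Using $|A|^3/3^{4N} = \rho^3/3^N$, this yields the restriction-type estimate $|W|^4 \leq \rho^3 E_4(\Delta')$. Feeding back $W \gtrsim \rho^2 |\Delta'|$, $|\Delta'| \geq |\Delta|/2$, and $\rho = N^{-(1+\epsilon)}$ gives
\beqa
E_4(\Delta') \gtrsim \frac{W^4}{\rho^3} \gtrsim \rho^5 |\Delta|^4 = \frac{|\Delta|^4}{N^{5+5\epsilon}},
\eeqa
which is the desired bound. The one place requiring real care is the choice of moment: using $L^2$ of $F$ instead of $L^4$ would not see $E_4(\Delta')$ on the right at all. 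It is precisely the insertion of the unimodular weight $\sigma$ followed by the fourth-power expansion that converts the tautological lower bound on $W$ into an estimate counting additive quadruples in $\Delta'$.
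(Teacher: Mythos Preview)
Your proof is correct and follows the same skeleton as the paper's: a lower bound on a linear sum over $\Delta'$, H\"older with exponent $4$, then a fourth-moment Fourier expansion counting additive quadruples. The only cosmetic difference is that the paper linearizes via balanced hyperplane indicators $1_{H_x}-\tfrac{1}{3}$ (whose Fourier support is $\{\pm x\}$, necessitating a separate proposition and an appeal to the symmetry of $\Delta'$ to handle the signs in $\pm x_1\pm x_2\pm x_3\pm x_4=0$), whereas your unimodular weight $\sigma(x)e(a\cdot x)$ gives the clean condition $x_1+x_2=x_3+x_4$ directly and bypasses that detour.
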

The argument for a major subset $\Delta '$ of $\Delta $ is no different, so for convenience of notation we assume in fact
$\Delta ^{\prime} = \Delta $.

We retain the notation of the previous section considering $\Delta$ the spectrum of a large cap set $A$.
In particular, we have $|A| >> {3^N \over N^{1+\epsilon}}$, we have
$$N^3 \lesssim |\Delta| \lesssim N^{3 + 3 \epsilon},$$
we have for every $x \in \Delta$, that $|\widehat{A} (x)| \gtrsim {|A| \over N^{1+\epsilon}}$, and we have
that $\Delta$ is symmetric, namely $\Delta=-\Delta$.

From the lower bound on $|\widehat{ A }(x)|$, we have for each $x$, an affine hyperplane $H_x$, annihilated by $x$
so that
$$|A \cap H_x| - {1 \over 3} |A| \gtrsim 3^N{ \rho \over N^{1 + \epsilon}} = { |A| \over  N^{1 + \epsilon} }.$$

Summing over $\Delta$, we obtain

$$\sum_{x \in \Delta} (|A \cap H_x| -{|A| \over 3} )\gtrsim {|A| |\Delta| \over N^{1 + \epsilon}}.$$

We wish to rewrite this as a double sum by introducing $1_{H_x}$, the indicator function of $H_x$.

$$\sum_{x \in \Delta} \sum_{y \in A} (1_{H_x}(y) - {1 \over 3}) \gtrsim {|A| |\Delta| \over N^{1 + \epsilon}}.$$

We interchange the order of the sums:

$$\sum_{y \in A} (\sum_{x \in \Delta} (1_{H_x}(y) - {1 \over 3})) \gtrsim {|A| |\Delta| \over N^{1 + \epsilon}}.$$

Now we apply H\"older's inequality:

$$|A|^{{3 \over 4}} ( \sum_{y \in A} |\sum_{x \in \Delta} (1_{H_x}(y) - {1 \over 3})|^4)^{{1 \over 4}}
\gtrsim {|A| |\Delta| \over N^{1 + \epsilon}}.$$

Taking everything to the fourth power and simplifying, we get

$$ \sum_{y \in A} |\sum_{x \in \Delta} (1_{H_x}(y) - {1 \over 3})|^4
\gtrsim {|A| |\Delta|^4 \over N^{4 + 4\epsilon}}.$$

Crudely expanding the sum, we get the apparently weaker inequality

$$ \sum_{y \in F_3^N} |\sum_{x \in \Delta} (1_{H_x}(y) - {1 \over 3})|^4
\gtrsim {|A| |\Delta|^4 \over N^{4 + 4\epsilon}}.$$

We can rewrite this as

\begin{equation}  \label{Batemanrulz} \sum_{y \in F_3^N} \sum_{x_1,x_2,x_3,x_4 \in F_3^N} 
\Pi_{\alpha=1}^4  (1_{H_{x_{\alpha}}}(y) -{1 \over 3}) \gtrsim {|A| |\Delta|^4 \over N^{4 + 4\epsilon}}.
\end{equation}

We claim the estimate \ref{Batemanrulz} says that the spectrum $\Delta$ has substantial additive structure. This will
be demonstrated by the following proposition.

\begin{proposition} \label{Batemanroks} Let $x_1,x_2,x_3,$ and $x_4$ be non-zero elements of $F_3^N$.
Then the expression
$$\sum_{y \in F_3^N}  \Pi_{\alpha=1}^4  (1_{H_{x_{\alpha}}}(y) -{1 \over 3}) \lesssim  3^N$$
Moreover it vanishes unless an equality of the form
$$\pm x_1 \pm x_2 \pm x_3 \pm x_4=0$$
holds.
\end{proposition}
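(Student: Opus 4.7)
The plan is to expand each factor $1_{H_{x_\alpha}}(y) - \tfrac{1}{3}$ using Fourier analysis on $F_3$ and then invoke character orthogonality in the sum over $y$. Concretely, each $H_{x_\alpha}$ is an affine hyperplane of the form $\{y : x_\alpha \cdot y = c_\alpha\}$ for some $c_\alpha \in F_3$, and the identity $\sum_{t \in F_3} e(ts) = 3 \cdot \mathbf{1}_{s=0}$ gives
\begin{equation*}
1_{H_{x_\alpha}}(y) - \tfrac{1}{3} \;=\; \tfrac{1}{3}\bigl(e(x_\alpha \cdot y - c_\alpha) + e(-x_\alpha \cdot y + c_\alpha)\bigr),
\end{equation*}
using that $2 \equiv -1$ in $F_3$. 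The utility of this formulation is that the ``constant $1/3$'' has been cancelled, leaving only the two non-trivial characters, so the only contribution to the $y$-sum will come from genuine cancellation in the exponents.

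Next, I would multiply out the four factors to obtain a sum over sign patterns $\epsilon = (\epsilon_1, \epsilon_2, \epsilon_3, \epsilon_4) \in \{\pm 1\}^4$:
\begin{equation*}
\prod_{\alpha=1}^4 \Bigl(1_{H_{x_\alpha}}(y) - \tfrac{1}{3}\Bigr) \;=\; \frac{1}{3^4} \sum_{\epsilon \in \{\pm 1\}^4} e\Bigl( \bigl( \textstyle\sum_{\alpha} \epsilon_\alpha x_\alpha \bigr) \cdot y - \textstyle\sum_\alpha \epsilon_\alpha c_\alpha \Bigr).
\end{equation*}
Then summing over $y \in F_3^N$ and applying character orthogonality kills every term except those sign patterns for which $\sum_\alpha \epsilon_\alpha x_\alpha = 0$; each such surviving term contributes a complex phase of modulus one multiplied by $3^N/3^4$. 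Since there are at most $2^4 = 16$ sign patterns, the whole sum is bounded by $\frac{16}{81} \cdot 3^N \lesssim 3^N$, and it vanishes unless some $\pm x_1 \pm x_2 \pm x_3 \pm x_4 = 0$ holds.

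There is really no serious obstacle here: the proposition is essentially a direct character computation that works because every factor has mean zero, so the only surviving frequencies are those for which the characters in $y$ combine to the trivial one. The only minor care required is in bookkeeping the factors of $3$ and ensuring the definition of $H_{x_\alpha}$ is consistent with the character expansion (i.e.\ that $H_{x_\alpha}$ really is a level set of $x_\alpha \cdot y$), which is exactly how it was introduced in Section~\ref{energy}.
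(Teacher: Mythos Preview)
Your proof is correct and is essentially the same as the paper's: both compute the Fourier expansion of the mean-zero functions $1_{H_{x_\alpha}} - \tfrac{1}{3}$, observe that only the frequencies $\pm x_\alpha$ survive, and then invoke character orthogonality in the $y$-sum. The paper phrases this via the abstract Fourier convolution identity (writing the sum as $3^N \sum_{z_1+z_2+z_3+z_4=0} f_1(z_1)\cdots f_4(z_4)$ and noting each $f_j$ is supported on $\{\pm x_j\}$), whereas you unpack the two-term character expansion of each factor explicitly; your version is slightly more concrete but the content is identical.
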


\begin{proof}

We introduce the Fourier transforms of the balanced function of the hyperplanes $H_{x_{\alpha}}$ setting
$$f_{\alpha}(z)= {1 \over 3^N} \sum_{x \in F_3^N}  (1_{H_{x_{\alpha}}}(y) -{1 \over 3}) e(y \cdot z).$$
Then we use the standard Fourier identity
$$\sum_{y \in F_3^N}  \Pi_{\alpha=1}^4  (1_{H_{x_{\alpha}}}(y) -{1 \over 3})
=3^N \sum_{z_1+z_2+z_3+z_4=0}  f_1(z_1) f_2(z_2) f_3(z_3) f_4(z_4).$$
We observe that $f_j(z_j)$ vanishes unless $z_j = \pm x_j$. 

The upper bound on the sum just follows from the triangle inequality. 

\end{proof}

To finish the proof of Proposition \ref{partialstructure}, we apply the inequality \ref{Batemanrulz}, 
the fact that $|A| >> {3^N \over N^{1+\epsilon}}$, 
the proposition \ref{Batemanroks} and the fact that the spectrum $\Delta$ is symmetric. 

%

%
%
%
%


\section{Random selection argument for additively smoothing spectrum} \label{randomsection}

In this section we study the additive properties of random subsets of the spectrum.  We will show that they 
typically have very poor additive structure.  This will allow us to conclude that, although the spectrum has 
many $4$-tuples, it cannot have too many $8$-tuples.  The significance of this will only be made clear in 
Section \ref{structuresection}.

We defined $E_{4}(\Delta)$ to be the number of additive quadruplets in $\Delta$.  We define $E_{2m}(\Delta)$
to be the number of additive $2m$-tuples  
$$x_1+x_2 + \dots x_m = x_{m+1} + x_{m+2} + \dots x_{2m},$$
such that $x_1, x_2, \dots , x_{2m} \in \Delta $.  We let $\widehat{\Delta}(x)$ be the Fourier transform:
$$\widehat{\Delta}(y)={1 \over 3^N} \sum_{x \in \Delta} e(y \cdot x).$$
Then
$$E_{2m}(\Delta) = 3^{(2m-1)N} \sum_{y \in F_3^N} |\hat \Delta(y)|^{2m},$$
which can be sen by expanding the sum on the right, as in the proof of Plancherel's theorem.  
We always have $E_2(\Delta)=|\Delta|$. When we have nontrivial amounts of additive structure in the
sense that say $E_{2k}(\Delta) >> |\Delta|^k$, we can lift this up to counts of higher-tuplets using
H\"older's inequality. (We use the inequality to bound the $2k$-norm by the $2$-norm and the $2m$-norm.)
We can view this process as a poor man's Plunnecke theorem. We record this
result for high $E_4$ and high $E_8$.

\begin{lemma} \label{holder} Let $m> 2$. Then
$${E_4(\Delta)^{m-1} \over |\Delta|^{m-2} } \leq E_{2m} (\Delta).$$
Suppose $m>3$. Then
$${E_8(\Delta)^{m-1 \over 3}  \over |\Delta|^{m-4 \over 3}} \leq E_{2m} (\Delta).$$
\end{lemma}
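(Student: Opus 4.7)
The plan is to prove both inequalities as direct applications of H\"older's inequality, by rewriting the $E_{2k}$ quantities in terms of the Fourier transform $\widehat{\Delta}$ and then interpolating the $L^4$ (respectively $L^8$) norm of $\widehat{\Delta}$ between its $L^2$ norm and its $L^{2m}$ norm. The formula
$$E_{2k}(\Delta) = 3^{(2k-1)N} \sum_{y \in F_3^N} |\widehat{\Delta}(y)|^{2k}$$
stated in the excerpt is the key translation. Applied with $k=1,2,4,m$ it gives us ways of expressing $|\Delta|$, $E_4(\Delta)$, $E_8(\Delta)$ and $E_{2m}(\Delta)$ as sums of fixed powers of the single nonnegative function $g(y) := |\widehat{\Delta}(y)|^2$.

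First I would dispose of the powers of $3^N$. A direct exponent count shows that in both desired inequalities the prefactors $3^{(\cdots)N}$ on the two sides match exactly, so the claims reduce to the purely analytic statements
\begin{equation*}
\Bigl(\sum_y g(y)^2\Bigr)^{m-1} \;\leq\; \Bigl(\sum_y g(y)\Bigr)^{m-2} \Bigl(\sum_y g(y)^m\Bigr)
\end{equation*}
for the first part, and
\begin{equation*}
\Bigl(\sum_y g(y)^4\Bigr)^{m-1} \;\leq\; \Bigl(\sum_y g(y)\Bigr)^{m-4} \Bigl(\sum_y g(y)^m\Bigr)^{3}
\end{equation*}
for the second part (where, despite the hypothesis $m > 3$ in the statement, the second assertion genuinely requires $m \geq 4$; the case $m=4$ is trivial, and I would note this aside).

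Next I would derive each of these from H\"older by choosing the split of the exponent on the left as a convex combination of $1$ and $m$. For the first inequality, write $g^2 = g^{(m-2)/(m-1)} \cdot g^{m/(m-1)}$ and apply H\"older with conjugate exponents $p=(m-1)/(m-2)$ and $q=m-1$ to obtain
$$\sum_y g^2 \;\leq\; \Bigl(\sum_y g\Bigr)^{(m-2)/(m-1)} \Bigl(\sum_y g^m\Bigr)^{1/(m-1)};$$
raising to the $(m-1)$st power gives the claim. For the second inequality, write $g^4 = g^{(m-4)/(m-1)} \cdot g^{3m/(m-1)}$ and apply H\"older with conjugate exponents $p=(m-1)/(m-4)$ and $q=(m-1)/3$ to obtain
$$\sum_y g^4 \;\leq\; \Bigl(\sum_y g\Bigr)^{(m-4)/(m-1)} \Bigl(\sum_y g^m\Bigr)^{3/(m-1)};$$
raising to the $((m-1)/3)$rd power gives the claim.

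There is no real obstacle; the only thing to be careful about is the choice of H\"older exponents, which is forced by the requirement that the exponents on $g$ match on both sides. Once one recognizes the inequalities as log-convexity of $p \mapsto \log \sum_y g^p$ between $p=1$ and $p=m$, evaluated at $p=2$ and $p=4$ respectively, the proof is mechanical.
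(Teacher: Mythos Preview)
Your proof is correct and follows exactly the approach the paper indicates: it uses the Fourier representation $E_{2k}(\Delta)=3^{(2k-1)N}\sum_y |\widehat{\Delta}(y)|^{2k}$ and then interpolates the $L^4$ (resp.\ $L^8$) norm of $\widehat{\Delta}$ between the $L^2$ and $L^{2m}$ norms via H\"older, which is precisely what the paper means by ``we use the inequality to bound the $2k$-norm by the $2$-norm and the $2m$-norm.'' Your check that the $3^{(\cdots)N}$ prefactors match and your explicit choice of H\"older exponents are the only details the paper omits, and they are carried out correctly (note also that $m$ is an integer here, so $m>3$ is the same as $m\ge 4$).
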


\subsection{Discussion of additive smoothing}

We are now ready to introduce the notion of additive smoothing. We keep in mind two examples of kinds of sets
having additive structure. One kind of set consists of a subspace plus a random set. The other consists
of a random subset of a subspace. We think of the first kind of set as not being additively smoothing because
as you add it to itself, its expansion rate stays essentially constant. This is the kind of example for which
Lemma \ref{holder} is close to sharp. But the second kind of set, when added to itself, will quickly
fill out the subspace and its rate of additive expansion will shrink dramatically. The lack of additive
structure smooths out under addition. This is the kind of example for which Lemma \ref{holder} is far from
sharp.

We will momentarily define $\Delta$ to be additively smoothing if $E_8(\Delta)$ is substantially larger than
expected from Lemma \ref{holder} and our lower bound for $E_4(\Delta)$ obtained in Section \ref{energy}.
(Nonetheless, for the purposes of this paper, the gain in the exponent need 
only be $O(\epsilon)$.) We will define additive smoothing
so that if $\Delta$ is additively smoothing then for some not very large $m$ (depending only on $\epsilon$), we may expect to find
additive $m$-tuplets of $\Delta$ in a randomly chosen set $S$ of $d$ elements. 

Before we formally define the property of additive smoothing, we illustrate how the calculation works in the 
case $\epsilon=0$. In that case $d \sim N$ so that an element of $\Delta$ (which has size $N^3$) is chosen
with probability $N^{-2}$. We have a lower bound of $N^7$ on $E_4(\Delta)$. Suppose that we can improve
on the lower bound of $N^{15}$ for $E_8(\Delta)$ which we get from the first part of Lemma \ref{holder}
and in fact
$$E_8(\Delta) > N^{15+\delta}$$
for some $\delta>0$.
Then from the second part of Lemma \ref{holder}, we obtain the estimate
$$N^{(4+{\delta \over 3}) m + 7 -{\delta \over 3} } \leq E_{2m}(\Delta).$$
Thus there is some $m$ which depends only on $\delta$ so that
$$E_{2m}(\Delta) >> N^{4m}.$$
Thus the expected number of $2m$-tuplets in $S$ is $>> 1$. We will formally define
additive smoothing to achieve the same effect when $\epsilon$ is different from 0.

\subsection{Nonsmoothing of the spectrum}
In this subsection we make rigorous the arguments of the last subsection.
\begin{definition}
We define $\Delta$ to be additively smoothing if there is some $\sigma >  30 \epsilon$
so that $E_8(\Delta) >> N^{15 + \sigma}.$ 
\end{definition}
We are now in a position to state the main result of this section.

\begin{lemma} \label{smoothing} If $\Delta$ is the spectrum of a large cap set without strong increments
then $\Delta$ is not additively smoothing. \end{lemma}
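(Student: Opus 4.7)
I argue by contradiction: assume $\Delta$ is additively smoothing, so $E_8(\Delta) \gg N^{15+\sigma}$ for some $\sigma > 30\epsilon$. The strategy is to use this enhanced additive structure to find a low-dimensional subspace $V$ containing too many points of $\Delta$, contradicting Proposition \ref{first}.

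First, bootstrap to higher-order energies. Applying the second part of Lemma \ref{holder} with $E_8(\Delta) \gg N^{15+\sigma}$ and $|\Delta| \lesssim N^{3+3\epsilon}$ yields, for every integer $m > 3$,
\[
E_{2m}(\Delta) \;\gtrsim\; \frac{E_8(\Delta)^{(m-1)/3}}{|\Delta|^{(m-4)/3}} \;\gg\; N^{4m + (\sigma/3)(m-1) - \epsilon(m-4)}.
\]
Since $\sigma > 30\epsilon$, choosing $m$ to be a sufficiently large integer depending only on $\epsilon$ gives $E_{2m}(\Delta) \gg N^{4m + \tau m}$ with $\tau>0$ bounded below in terms of $\sigma - 30\epsilon$.

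Next, sample a uniformly random subset $S \subseteq \Delta$ of size $d$, with $d$ chosen to be a polynomial in $N$ of modest degree (still satisfying $d \leq N/2$) so that the expected number of ordered additive $2m$-tuples
\[
\mathbb{E}[E_{2m}(S)] \;\approx\; \left(\frac{d}{|\Delta|}\right)^{2m} E_{2m}(\Delta)
\]
substantially exceeds the trivial (diagonal) contribution of order $d^m$; the lower bound on $E_{2m}(\Delta)$ just obtained makes this achievable once $m$ is large enough. A second-moment concentration argument then produces a specific realisation of $S$ with $|S| \asymp d$ containing at least the expected number of non-trivial additive $2m$-tuples.

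The crucial step is to convert this abundance of non-trivial additive $2m$-tuples in $S$ into the assertion that some large subset $S' \subseteq S$ lies in an affine subspace $V$ of dimension much smaller than $|S'|/N^{1+2\epsilon}$. The plan is to exploit the lower bound on $E_{2m}(S)$ to derive an upper bound on the $m$-fold sumset $|mS'|$ via Plunnecke-type manipulations, to extract the structured subset $S'$ from $S$ using the Balog-Szemeredi-Gowers lemma (Lemma \ref{absg}) in a form adapted to the $m$-fold energy, and finally to apply Freiman's theorem (Theorem \ref{Freiman}) to embed $S'$ in a subspace $V$ of dimension $N^{O(\mu)}$, where $\mu \to 0$ as $m\to\infty$. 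With such a $V$ in hand, Proposition \ref{first} gives $|\Delta \cap V| \lesssim \dim V \cdot N^{1+2\epsilon} \ll |S'|$, which contradicts $S' \subseteq \Delta \cap V$. The hardest part will be this quantitative calibration: the $N^{O(\epsilon)}$ losses from the H\"older step, the $N^{O(\mu)}$ loss from Freiman's theorem, and the losses in the passage from $E_{2m}$ to small $m$-fold doubling must together be absorbed by the $\tau m$ slack produced by additive smoothing; the threshold $\sigma > 30\epsilon$ is tuned precisely to make such a balance possible.
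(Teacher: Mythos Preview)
Your opening moves match the paper's: assume smoothing, use the second part of Lemma \ref{holder} to push $E_8$ up to a lower bound on $E_{2m}$, and sample a random subset $S$ of $\Delta$ of controlled size. But at the ``crucial step'' the paper does something much simpler than what you propose, and your version has a real gap.

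The paper does \emph{not} pass through Balog--Szemer\'edi--Gowers or Freiman here at all. Instead it uses Proposition \ref{first} upstream, to control the \emph{nullity} of the random set $S$ directly. With $d \sim N^{1-\epsilon}$, after $m$ selections the span $W_m$ has dimension at most $m$, so $|\Delta\cap W_m|\lesssim m N^{1+2\epsilon}$ by Proposition \ref{first}; hence the probability that the next selection falls in $W_m$ is at most $1/d$, and the probability of nullity $\ge k$ is $\lesssim 2^{-k}$ (Lemma \ref{random}). Separately, a set of size $d$ and nullity $k$ supports at most $C_m k^{2m}$ additive $2m$-tuples, since all short linear relations among its elements are spanned by $k$ of them. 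Summing $2^{-k}k^{2m}$ gives $\mathbb E[E_{2m}(S)]\lesssim_m 1$. On the other hand the smoothing hypothesis forces $E_{2m}(\Delta)\gg N^{(4+2\epsilon)m}$ for $m$ large enough, which makes $\mathbb E[E_{2m}(S)]\gg 1$. That is the whole contradiction; working the inequality backwards gives $E_8(\Delta)\lesssim N^{15+27\epsilon}$.

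Your route --- find a specific $S$ with $E_{2m}(S)$ large, then extract $S'$ with small iterated sumset via ``BSG adapted to $m$-fold energy'' and Freiman --- is not justified as written. Lemma \ref{absg} is stated for additive \emph{quadruples}, and a large $E_{2m}$ does not by itself yield a usefully large $E_4$ (the H\"older inequality in Lemma \ref{holder} goes the other way). A higher-order BSG statement of the strength you need is neither standard nor supplied in the paper, and the quantitative losses in any such reduction could easily eat the $\tau m$ slack you are counting on. The paper's insight is precisely that one can bypass all of this: Proposition \ref{first} already says the spectrum is ``spread out'' over subspaces, so a random sample of it is nearly linearly independent, and nearly independent sets have essentially no nontrivial $2m$-tuples.
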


We begin with a few comments about our proof strategy in this section.  If $S$ is a ``random" subset of $\Delta $, then we expect 
\beqa
E_{2m} (S) = \left({|S| \over |\Delta |}\right)^{2m} E_{2m} (\Delta).
\eeqa
Thus we can show that $E_{2m} (\Delta)$ is small by showing that $E _{S}$ is small for a typical 
(somewhat large) subset $S$ of $\Delta$.  


Now we fix a particular number $d$ and consider random subsets of $\Delta$ of size $d$.  We will take 
\begin{equation} \label{ssize}  d \sim N^{1-\epsilon} \end{equation}
with the explicit constant to be determined later.  Our first goal is to prove that we expect this subset to 
span a space of dimension $d$.  More precisely:  
\begin{definition}
Let $S$ be a set of $d$ vectors $x_1, \dots, x_d \in F_3^N$. We say that the set $S$ has nullity
$k$ if the dimension of the span of $S$ is $d-k$. 
\end{definition}
We will consider uniform random selections of sets of
$d$ elements from $\Delta$. We can view these selections as $d$-fold repetitions of uniform selection without
replacement.  We will prove

\begin{lemma} \label{random} A random selection $S$ of size $d$ from the spectrum $\Delta$ has nullity at
least $k$ with probability $\lesssim 2^{-k}$. \end{lemma}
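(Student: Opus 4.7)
The plan is to view the selection $S=\{x_1,\ldots,x_d\}$ as being generated sequentially without replacement from $\Delta$, and to track the running span $W_i = \mathrm{span}(x_1,\ldots,x_i)$. With this ordering, the nullity of $S$ equals the number of indices $i \in \{1,\ldots,d\}$ for which $x_i \in W_{i-1}$; call such an $i$ a \emph{collision}. The event that $S$ has nullity at least $k$ is then exactly the event that at least $k$ collisions occur.

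The first step is to bound the conditional probability of a collision at step $i$. Since $\dim W_{i-1} \leq i-1 \leq d-1 \leq N/2$ (taking $N$ large and $d \sim N^{1-\epsilon}$), Proposition~\ref{first} applies to $W_{i-1}$ and gives $|\Delta \cap W_{i-1}| \lesssim (i-1) N^{1+2\epsilon} \lesssim d N^{1+2\epsilon}$. Combining this with $|\Delta| \gtrsim N^3$ from Proposition~\ref{size} and the fact that the remaining pool has size at least $|\Delta|-d \gtrsim N^3$, the conditional probability of a collision at step $i$, given any realization of the earlier choices, is at most
\[ \frac{|\Delta \cap W_{i-1}|}{|\Delta|-d} \lesssim \frac{dN^{1+2\epsilon}}{N^3} \leq C c\, N^{-1+\epsilon}, \]
where I have written $d = cN^{1-\epsilon}$ and $C$ absorbs the implied constants from the two propositions.

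Next I would union-bound over the $\binom{d}{k}$ possible positions of the $k$ collisions. For any prescribed index set $i_1 < \cdots < i_k$, sequential conditioning on the history up to step $i_j-1$ turns the joint probability of collisions at all $i_j$ into a product of the corresponding conditional bounds, yielding
\[ \Pr(\mathrm{nullity}\ge k) \;\leq\; \binom{d}{k}\bigl(C c N^{-1+\epsilon}\bigr)^k \;\leq\; \frac{(Cc \cdot d \cdot N^{-1+\epsilon})^k}{k!} \;=\; \frac{(Cc^2)^k}{k!}. \]
Finally I fix the still-undetermined constant $c>0$ in $d = cN^{1-\epsilon}$ to be small enough that $Cc^2 \leq 1/2$. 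This gives $\Pr(\mathrm{nullity} \geq k) \leq 2^{-k}/k! \lesssim 2^{-k}$, as claimed, and simultaneously nails down the constant in the size of the random sample.

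The only genuinely nontrivial input is Proposition~\ref{first}; the rest is bookkeeping. The main subtlety worth flagging is the conditioning: the events ``collision at step $i$'' are not independent of the previous steps, so one must bound \emph{conditional} probabilities given the full history and invoke the tower property, not merely union bounds on unconditional events. One also needs the mild sanity check that $\dim W_{i-1} \leq N/2$ holds for every $i \leq d$, which is automatic from $d \ll N$. The whole argument really just observes that the constant in $d$ and the constant in $|\Delta \cap W| \lesssim dN^{1+2\epsilon}$ interact at precisely the right order of magnitude for a geometric tail to appear.
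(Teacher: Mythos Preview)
Your argument is correct and follows essentially the same route as the paper: sequential selection without replacement, Proposition~\ref{first} to bound $|\Delta\cap W_{i-1}|$, a uniform bound on the conditional collision probability, and then a binomial tail bound. The only cosmetic difference is that the paper phrases the final step as domination by $d$ independent Bernoulli$(1/d)$ trials and computes the tail directly, whereas you write out the union bound over collision positions with sequential conditioning; these are equivalent.
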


\begin{proof} 
Once we have completed our first $m$ choices, our selections $x_1,\dots,x_m$ span a vector
space $W_m$ with dimension no more than $m$. Thus $|\Delta \cap W_m| \lesssim m N^{1+2 \epsilon}$
by Proposition \ref{first}. We choose
the constant in \ref{ssize} so that the probability that the $m+1$st element of $S$ lies in $W_m$
is bounded by ${1 \over d}$ for all $m \leq d-1$.  Note that since $m \leq d$, this probability is bounded by 
\beqa
{ {|\Delta \cap W_m| } \over {|\Delta | } } & \leq & {{ Cd N^{1+2 \epsilon}  } \over {N^3} }  \\
& = & {Cd \over {N^{2- 2 \epsilon} } } \\ 
& \leq & { 1 \over d } 
\eeqa 
provided $d << {N^{1- \epsilon}}$.
Thus the probability that $S$ has nullity at
least $k$ is bounded by the probability that for $d$ independent events with probability ${1 \over d}$
at least $k$ occur. 

The probability that exactly $k$ events from $d$ independent events with probability ${1 \over d}$
occur is exactly
$$g(k,d)={d \choose k} {(d-1)^{d-k} \over d^d}.$$
The numbers $g(k,d)$ decrease by a factor of more than 2 as $k$ is increased by 1 as long as $k>2$.  
This completes the proof of the lemma.
\end{proof}
Now that we know our random subset is likely to have full rank, we estimate the number of $2m$-tuples it contains in the case it 
does not have full rank.  Given a set $S$ with nullity $k$ we will bound the number of possible additive $2m$-tuplets
between elements of $S$.  Specifically:
\begin{lemma}
A set $S$ of size $d$ and nullity $k$ has $E_{2m} (S) \lesssim C_m k^{2m} $.
\end{lemma}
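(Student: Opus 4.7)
The plan is to parametrize each additive $2m$-tuple by an element of a kernel and to count via the support of that element. Let $\phi\colon F_3^d\to F_3^N$ be the linear map $\phi(e_i)=x_i$, where $\{e_1,\dots,e_d\}$ is the standard basis of $F_3^d$ and $S=\{x_1,\dots,x_d\}$; the nullity hypothesis gives $\dim\ker\phi=k$. For each additive $2m$-tuple $(x_{i_1},\dots,x_{i_{2m}})$ form the signed indicator vector
\begin{equation*}
v:=\sum_{j=1}^{m} e_{i_j}-\sum_{j=m+1}^{2m}e_{i_j}\in F_3^d.
\end{equation*}
The additivity condition $\sum_{j\le m}x_{i_j}=\sum_{j>m}x_{i_j}$ is exactly $\phi(v)=0$, so $v\in\ker\phi$, and $E_{2m}(S)=\sum_{v\in\ker\phi}N(v)$ where $N(v)$ is the number of tuples producing $v$.

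The first observation is that $v$ has support of size at most $2m$, since it is a signed sum of $2m$ basis vectors. So only the low-support part of $\ker\phi$ contributes. To count those, pick a reduced-echelon basis $w^1,\dots,w^k$ of $\ker\phi$ with pivot positions $p_1<\cdots<p_k$; the coordinate map $v\mapsto(v_{p_1},\dots,v_{p_k})$ is an isomorphism $\ker\phi\cong F_3^k$. Any $v$ of support $\le 2m$ has at most $2m$ nonzero pivot coordinates, so the number of such kernel elements is at most $\binom{k}{2m}\cdot 2^{2m}\lesssim_m k^{2m}$.

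Next, for each such $v$ I would bound $N(v)$ by a quantity depending only on $m$. Here $N(v)$ is the sum of multinomial products $\binom{m}{a_1,\dots,a_d}\binom{m}{b_1,\dots,b_d}$ over $a,b\in\mathbb Z_{\ge 0}^d$ with $\sum_j a_j=\sum_j b_j=m$ and $a-b\equiv v\pmod 3$ componentwise. The plan is to exploit the combined constraints: on $\mathrm{supp}(v)$ the residues of $a_t-b_t$ are forced, and off $\mathrm{supp}(v)$ the total mass available on each side is at most $m$, so up to multinomial-weighting the free coordinates collapse to a bounded count. This should yield $N(v)\lesssim_m 1$, and multiplying by the count from the previous step gives $E_{2m}(S)\lesssim C_m k^{2m}$.

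The main obstacle I anticipate is step three: off $\mathrm{supp}(v)$ one has $a_t\equiv b_t\pmod 3$, so either $a_t=b_t$ (matched ``free'' coordinates, of which many in $\{1,\dots,d\}$ are a priori eligible) or $|a_t-b_t|\ge 3$; once $m$ is moderately large, the free-coordinate contributions threaten to introduce a $d$-dependent factor. Controlling this cleanly --- presumably via a careful use of the mass constraints $\sum_j a_j=\sum_j b_j=m$ together with the small-support hypothesis to bound the number of valid $(a,b)$ pairs uniformly in $d$ --- is the technical crux. Once step three is in hand the two counts combine to produce the required bound.
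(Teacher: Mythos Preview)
Your anticipated obstacle in step three is fatal, not merely technical: $N(v)$ is genuinely unbounded in $d$. Take $m=3$ and any nonzero $v\in\ker\phi$ of support four, say $v=e_1+e_2-e_3-e_4$ (encoding the relation $x_1+x_2=x_3+x_4$). For every $t\notin\{1,2,3,4\}$ the ordered $6$-tuple $(1,2,t,3,4,t)$ satisfies $a-b\equiv v\pmod 3$, so already $N(v)\ge d-4$. The mass constraints $\sum a_j=\sum b_j=m$ do nothing to prevent a single matched pair $a_t=b_t=1$ from landing on an arbitrary free coordinate, and there are $\sim d$ of those. More generally, whenever $|\mathrm{supp}(v)|<2m$ the slack is absorbed by such padding and $N(v)$ picks up a factor polynomial in $d$.

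The resolution is that the lemma as literally written --- a bound on the full $E_{2m}(S)$ --- is false: already $E_{2m}(S)\ge d^m$ from tuples whose second half repeats the first, even when $k=0$. What the application needs, and what the paper's proof actually establishes, is the bound for $2m$-tuples with \emph{pairwise distinct entries}. Under that restriction your scheme works at once: when the $i_j$ are distinct there is no cancellation in $v=\sum_{j\le m}e_{i_j}-\sum_{j>m}e_{i_j}$, so $\mathrm{supp}(v)=\{i_1,\dots,i_{2m}\}$ exactly and $N(v)\le(2m)!$. Combined with your correct pivot count of low-support kernel vectors this gives $\lesssim_m k^{2m}$.

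The paper packages the same linear algebra slightly differently. It observes that the kernel vectors of support $\le 2m$ span a subspace of $\ker\phi$ of dimension at most $k$, picks a basis from among them, and lets $T$ be the union of the basis supports, so $|T|\le 2mk$; every low-support relation is then supported in $T$. For a distinct-entry $2m$-tuple the associated $v$ has support equal to the index set, hence all indices lie in $T$, and the count is at most $2^m(2m)!\binom{2mk}{2m}$. Your echelon-pivot bound and the paper's common-support set $T$ are two expressions of the same fact; the paper's formulation makes the passage to distinct-entry tuples a little more transparent, while yours gives a cleaner count of the kernel vectors themselves.
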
 
\begin{proof}
We write a list $E$ of all equations among elements of $S$ which involve $2m$ or
fewer elements of $S$. Because the nullity is $k$, the span of these equations has dimension at most $k$.
We pick a basis $B$ for $E$ and the equations in $B$ involve at most $2mk$ elements of $S$. Thus all of the
equations of $E$ involve at most $2mk$ elements of $S$. Thus there are at most 
$$h(m,k)= 2^m (2m)! {2mk \choose 2m},$$
additive $2m$-tuplets from $S$. We refer to $h(m,k)$ as the number of possible $m$-tuplets in $S$. Note
that $h(m,k)$ is a polynomial of degree $2m$ in $k$.
\end{proof}
\begin{proof} [Proof of Lemma \ref{smoothing} ]

Now let $S$ be a random selection of $d$ elements from $\Delta$. Then by Lemma \ref{random}, the probability
that $S$ has nullity $k$ is $\lesssim 2^{-k}$. Thus the expected value of 
the number of possible $2m$-tuples $\sum _{k\geq 0} h(m,k)$ is $\lesssim_m  \sum _{k \geq 0} 2^{-k} k^{2m} \lesssim 1$.

Now we will show that we have defined additive smoothing so that the expected number of $2m$-tuples is $>>1$. 
This will give us a contradiction.

We know that $d \gtrsim N^{1-\epsilon}$. Thus our selection $S$ will be expected to have $>> 1$ non-trivial
$2m$-tuples, whenever $E_{2m}(\Delta) >> N^{4m+2m\epsilon}.$ (We simply calculate the probability that an individual
$2m$-tuple involves only elements of $S$.) Thus we may assume  that
$$E_{2m}(\Delta) \lesssim  N^{4m+8m\epsilon}.$$

Using the fact that $|\Delta| \lesssim N^{3+3\epsilon}$ and the second part of Lemma \ref{holder}, we get that
$$E_{8}(\Delta) \lesssim N^{{15m + 27m\epsilon -1 - \epsilon \over m-1}}.$$
Choosing $m$ sufficiently large gives
$$E_{8}(\Delta) \lesssim N^{15 + 27 \epsilon}.$$
The choice of $m$ and hence the constants depend on $\epsilon$ but not on $N$.

\end{proof}

\section{Structure of robust additively non-smoothing sets} \label{structuresection}

In this section, the only properties of the spectrum $\Delta$ which we shall use are its size, its additive structure, and its 
non-additive smoothing. Consequently the results can be stated in somewhat more generality. We leave intact,
however, the numerology coming from the case of spectrum of cap sets.

We will say that a symmetric set $\Delta \subset F_3^N$ is a robust additively non-smoothing set of strength $\delta$ provided that we know its size:
\begin{equation} 
N^3 \lesssim |\Delta| \lesssim N^{3+\delta}
\end{equation}
that we know how many additive quadruples can be made from any large subset of it, namely that if
$\Delta^{\prime} \subset \Delta$ with $|\Delta^{\prime}| \geq {3 \over 5} |\Delta|$ and
$\Delta^{\prime}$ symmetric, we have
\begin{equation} \label{robust}
E_4(\Delta^{\prime}) \gtrsim N^{7-\delta}
\end{equation}
and  that we have additive non-smoothing, namely
\begin{equation} \label{octuples}  
E_8(\Delta) \lesssim N^{15 + \delta}
\end{equation}
and moreover that for each element $a \in \Delta$, there are at most $N^{4+\delta}$ quadruples of the form
$$\pm a \pm b=\pm c \pm d$$
with $b,c,d \in \Delta$.

Let us pause to consider the case of $\Delta$, the spectrum of a cap set with no strong increments. We know
that the number of $a \in \Delta$ participating in more than $N^{4+O(\epsilon)}$ quadruplets
$$\pm a \pm b=\pm c \pm d$$
is smaller than ${1 \over 10} |\Delta|$ since otherwise $\Delta$ would have more quadruples and hence more
octuples than allowed by Lemma \ref{smoothing}. Let $\Delta^{\prime}$ be the remaining elements of $\Delta$.
Note that by its definition $\Delta^{\prime}$ is still symmetric. Note that any symmetric subset of $\Delta^{\prime}$
containing at least three fifths of its elements must contain at least half the elements of $\Delta$.
Thus from Proposition \ref{size}, Proposition \ref{partialstructure}, and Lemma \ref{smoothing} we know that:

\begin{proposition} \label{Spectrumsays} Let $\Delta$ be the spectrum of a large capset with no strong increments.
There is a subset $\Delta^{\prime}$ of $\Delta$ so that $\Delta^{\prime}$
is a robust additively non-smoothing set of strength $O( \epsilon)$. 
\end{proposition}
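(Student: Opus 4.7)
The plan is essentially the one sketched in the paragraph immediately preceding the proposition. Starting from the spectrum $\Delta$ of the cap set, I will extract $\Delta'$ by discarding those elements $a \in \Delta$ that participate in too many quadruples $\pm a \pm b = \pm c \pm d$. Concretely, fix a constant $C$ (to be pinned down below, with $C > 12$ sufficing), and let
\[
B = \bigl\{a \in \Delta : \#\{(b,c,d) \in \Delta^3 : \pm a \pm b = \pm c \pm d \text{ for some choice of signs}\} > N^{4+C\epsilon}\bigr\}.
\]
Because $\Delta$ is symmetric and the defining condition of $B$ is invariant under $a \mapsto -a$, the set $B$, and hence $\Delta' := \Delta \setminus B$, is symmetric. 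By construction, condition (iv) in the definition of a robust additively non-smoothing set holds for $\Delta'$.

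The only real step is to show $|B| < \tfrac{1}{10}|\Delta|$. Suppose this fails. Summing the per-element quadruple counts over $a \in B$, and using that each $E_4$-quadruple is counted with bounded multiplicity (from the four positions and the eight sign choices), we obtain
\[
E_4(\Delta) \gtrsim |B|\, N^{4+C\epsilon} \gtrsim N^{7+C\epsilon}.
\]
Feeding this into Lemma \ref{holder} with $m = 4$ gives
\[
E_8(\Delta) \geq \frac{E_4(\Delta)^3}{|\Delta|^2} \gtrsim \frac{N^{21 + 3C\epsilon}}{N^{6 + 6\epsilon}} = N^{15 + (3C-6)\epsilon}.
\]
For $C > 12$ this contradicts Lemma \ref{smoothing}, which forbids $E_8(\Delta) \gg N^{15+\sigma}$ for any $\sigma > 30\epsilon$. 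Hence $|\Delta'| \geq \tfrac{9}{10}|\Delta|$.

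With this in hand, the remaining conditions in the definition reduce to bookkeeping. The size bounds are inherited from $\Delta$ via Proposition \ref{size}. For the robust $E_4$ bound, any symmetric $\Delta'' \subseteq \Delta'$ with $|\Delta''| \geq \tfrac{3}{5}|\Delta'|$ satisfies $|\Delta''| \geq \tfrac{27}{50}|\Delta| > \tfrac{1}{2}|\Delta|$, so Proposition \ref{partialstructure} applies directly to $\Delta''$ and yields $E_4(\Delta'') \gtrsim N^{7 - 5\epsilon}$. The $E_8$ upper bound is inherited from $\Delta$ via Lemma \ref{smoothing}. All four conditions thus hold with strength $O(\epsilon)$. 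The main obstacle is the numerology of the middle step: the per-element threshold $N^{4+C\epsilon}$ must be large enough that the chained $E_4 \to E_8$ bound strictly exceeds the non-smoothing ceiling $N^{15+30\epsilon}$, which forces $C > 12$; the rest is essentially automatic.
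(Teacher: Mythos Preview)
Your proof is correct and follows exactly the approach outlined in the paragraph preceding the proposition in the paper: remove the elements participating in too many quadruples, use the $E_4 \to E_8$ bound from Lemma~\ref{holder} together with Lemma~\ref{smoothing} to show few such elements exist, and then invoke Propositions~\ref{size} and~\ref{partialstructure} for the remaining conditions. The only difference is that you make the numerology explicit (computing that $C>12$ suffices), whereas the paper leaves this implicit in the phrase ``more quadruples and hence more octuples than allowed.''
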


Returning to the setting of robust additively non-smoothing sets, we let, for the remainder of the section, the
set $\Delta$ be a robust additively non-smoothing set of strength $\delta$.

Given a value $x \in \Delta-\Delta$, we define $m(x)$ to be the number of pairs $(a,b) \in \Delta \times \Delta$
so that $a-b = x$. Clearly we have
$$E_4(\Delta)=\sum_{x \in \Delta-\Delta}  m(x)^2.$$

Given a robust additively non-smoothing set $\Delta$ of strength $\delta$, for each $\alpha$, we may define
$G_{\alpha} \subset \Delta \times \Delta,$
by
$$G_{\alpha} = \{ (a,b) \in \Delta \times \Delta :   N^{1+\alpha} \leq  m(a,b) < 2 N^{1+\alpha} \}.$$
By the dyadic pigeonhole principle, there is an $\alpha$ so that
$$|G_{\alpha}| \gtrsim { N^{6-\alpha - \delta} \over \log N}.$$

Moreover, we know that no $a$ in $\Delta$ participates in more that $N^{4+\delta}$ quadruples.
Thus no element $a$ in $\Delta$ participates in more than $N^{3-\alpha+\delta}$ pairs
in $G_{\alpha}$. Thus there are at least $\sim { N^{3-2\delta} \over \log N}$ elements of $\Delta$ each of which participate
in at least $\sim { N^{3-\alpha-2\delta} \over \log N}$ pairs in $G_{\alpha}$, by the large families principle (Lemma \ref{largefamilies}).

We now forget about optimizing our exponents and consolidate this information in a single definition.

\begin{definition}
We say that $(\Delta,G,D)$ is an additive structure at height $\alpha$ with ambiguity $\eta$ if the following hold.
We have
$$|\Delta| \leq N^{3 + \eta}.$$
We have
$$G \subset  \Delta \times \Delta$$
with the property that for each $(a,b) \in G$ we have that $a-b \in D$, and so that each $d \in D$ has $\sim N^{1+\alpha}$
representations as a difference of a pair in $G$. We have $|G| \sim N^{6-\alpha-\eta}$. Moreover
there are at least $N^{3-\eta}$ elements of $\Delta$ participating in at least $N^{3-\alpha-\eta}$ sums each.
Finally there are no more than $N^{15 + \eta}$ additive octuples among elements of $\Delta$.
\end{definition}
We summarize what we have shown so far in a proposition.

\begin{proposition} \label{structure1} Given a robust additively non-smoothing set $\Delta$ of strength $\delta$
we may find $G \subset \Delta \times \Delta$, and $D \subset \Delta-\Delta$ and $\alpha \geq 0$ so that
$(\Delta, G,D)$ is an additive structure at height $\alpha$ and ambiguity $O(\delta)$.
\end{proposition}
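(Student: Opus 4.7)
The plan is to observe that the conclusion is essentially a repackaging of facts already established in the discussion immediately preceding the proposition: the ``proof'' amounts to choosing $\alpha$, $G$, and $D$ correctly and checking that every constant, logarithmic factor, and dyadic loss can be absorbed into the ambiguity tolerance $N^{O(\delta)}$.

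First I would perform the dyadic decomposition. Writing
\[
E_4(\Delta) = \sum_{x \in \Delta - \Delta} m(x)^2
\]
and using the trivial bound $m(x) \leq |\Delta| \lesssim N^{3+\delta}$, the sum dyadically decomposes over $O(\log N)$ ranges of the form $N^{1+\alpha} \leq m(x) < 2N^{1+\alpha}$. The robust lower bound $E_4(\Delta) \gtrsim N^{7-\delta}$ from inequality \ref{robust} then forces some such range to account for $\gtrsim N^{7-\delta}/\log N$ of the energy. Taking $D$ to be the corresponding set of differences and $G = G_\alpha$, one reads off $|G| \sim N^{6-\alpha-\delta}/\log N$ with each $d \in D$ having $\sim N^{1+\alpha}$ representations as a difference of a pair in $G$. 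This also shows that $\alpha$ must be nonnegative up to an additive $O(\delta)$ (otherwise $E_4$ would be bounded by $|\Delta - \Delta|$ times a power of $N$ too small to match $N^{7-\delta}$), and the small discrepancy is absorbed into the ambiguity.

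Next I would verify the ``many elements with many sums'' clause via the large families principle, Lemma \ref{largefamilies}. Because each pair $(a,b) \in G_\alpha$ contributes $\sim N^{1+\alpha}$ quadruples to $\pm a \pm b = \pm c \pm d$, and because by hypothesis each $a \in \Delta$ appears in at most $N^{4+\delta}$ such quadruples, no element of $\Delta$ can lie in more than $\lesssim N^{3-\alpha+\delta}$ pairs of $G_\alpha$. Since summing pair-incidences over $a \in \Delta$ gives $2|G_\alpha| \gtrsim N^{6-\alpha-\delta}/\log N$, Lemma \ref{largefamilies} produces a subset of $\gtrsim N^{3-O(\delta)}$ elements of $\Delta$, each lying in $\gtrsim N^{3-\alpha-O(\delta)}$ pairs of $G_\alpha$; using symmetry $\Delta = -\Delta$, every such pair yields a sum representation, giving the required clause.

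The remaining two clauses are immediate: $|\Delta| \leq N^{3 + \delta}$ and $E_8(\Delta) \leq N^{15 + \delta}$ are direct hypotheses of the robust additively non-smoothing condition (inequality \ref{octuples}). The only real wrinkle is bookkeeping: one must check that all $\log N$ losses, factor-of-two dyadic widths, and implicit constants fit inside the $N^{O(\delta)}$ budget, which is harmless at our scale of parameters. I do not anticipate a genuine obstacle here; the conceptual work was done in Sections \ref{energy} and \ref{randomsection}, where the $E_4$ lower bound on all large symmetric subsets and the $E_8$ upper bound were established, and the present proposition is essentially a notational consolidation to feed into the Balog--Szemer\'edi--Gowers and Freiman machinery later on.
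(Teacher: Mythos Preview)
Your proposal is correct and matches the paper's own argument essentially line for line: the paper explicitly introduces the proposition as ``We summarize what we have shown so far,'' and the preceding paragraphs carry out exactly the dyadic pigeonholing on $m(x)$, the quadruple-count cap on per-element incidences in $G_\alpha$, and the large-families extraction of $\gtrsim N^{3-O(\delta)}$ popular elements that you describe. Your observation that $\alpha \geq -O(\delta)$ (so any negative part is absorbed into the ambiguity) is a point the paper leaves implicit but is handled correctly in your sketch.
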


We now describe a slightly deeper property of additive structures at height $\alpha$ and ambiguity $\eta$.
Given a structure $(\Delta,G,D)$, for each $x \in D$, we define the set $\Delta_G[x]$ to be the set
of $a \in \Delta$ so that there exists $b \in \Delta$ with $(a,b) \in G$ and $a-b = x$. In light of
our definitions, we have for each $x \in D$ that $|\Delta_G[x]| \sim N^{1+\alpha}$.  We consider the quantity

\begin{equation} \label{Komity} K(\Delta,G,D)= \sum_{x \in D} \sum_{y \in D}  | \Delta_G[x] \cap \Delta_G[y] |. 
\end{equation}

Clearly $K(\Delta,G,D)$ counts the number of triples $(a,x,y)$ with $a \in \Delta_G[x]$ and $a \in \Delta_G[y]$.
Each element in $a$ is contained in exactly as many sets $\Delta_G[x]$ as it participates (in the first position)
in pairs in $G$. Thus we conclude that for $(\Delta,G,D)$ an additive structure with height $\alpha$
and ambiguity $\eta$ that
$$K(\Delta,G,D) = \sum _{a\in \Delta } |\{ x\in D \colon a \in \Delta _G [x] \} |^2 \gtrsim N^{9-2\alpha - 3\eta}.$$

Now examining the equation (\ref{Komity}) and dyadically pigeonholing, we observe that we can find $\beta$
so that there are  at least $N^{9-2\alpha - \beta - 4\eta}$ pairs $(x,y)$ so that for each such pair,
we have $N^{\beta} \leq |\Delta_G[x] \cap \Delta_G[y]| < 2N^{\beta}$.

\begin{definition} 
We say that the additive structure $(\Delta,G,D)$ at height $\alpha$ and with ambiguity $\eta$ has comity $\mu$
if we can find the abovementioned $\beta$ with $\beta > 1 + \alpha - \mu$.  (Note that of course $\beta \leq 1+ \alpha$ because 
$|\Delta_G[x] \cap \Delta_G[y]| \leq |\Delta_G[x] | \lesssim N^{1+ \alpha} $.)
\end{definition}

\begin{lemma} \label{Iteration} Given an additive structure $(\Delta,G,D)$ at height $\alpha$ and with ambiguity
$\eta$ either it has comity $\mu$ or there is an additive structure $(\Delta,G^{\prime},D^{\prime})$ with height
$\beta-1 < \alpha-\mu$ and ambiguity $O(\eta)$. \end{lemma}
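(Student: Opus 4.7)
The plan is to prove the contrapositive: assuming that $(\Delta,G,D)$ does not have comity $\mu$, I build a new additive structure of strictly smaller height. Failure of comity $\mu$ means that in the dyadic pigeonholing preceding the definition of comity, the value $\beta$ produced there must satisfy $\beta \leq 1+\alpha-\mu$. Concretely, there is a set $P \subset D \times D$ of ``popular'' pairs with $|P| \gtrsim N^{9-2\alpha-\beta-O(\eta)}$ and $|\Delta_G[x] \cap \Delta_G[y]| \sim N^\beta$ for each $(x,y) \in P$. The new structure $(\Delta,G',D')$ will be extracted from the common intersections arising in $P$.

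The core construction is as follows: for each popular pair $(x,y) \in P$ and each $a \in \Delta_G[x] \cap \Delta_G[y]$, the pair $(a-x,a-y) \in \Delta \times \Delta$ has difference $y-x$. Thus each fixed popular pair contributes $\sim N^\beta$ distinct pairs sharing the common difference $d' = y-x$. Letting $D'$ be a subset of $\{y-x : (x,y) \in P\}$ obtained by a pigeonholing step, and $G'$ be the corresponding collection of constructed pairs, we obtain a configuration in which each $d' \in D'$ has roughly $N^\beta = N^{1+(\beta-1)}$ representations as a difference from $G'$, matching the target height $\beta - 1$. Two layers of dyadic pigeonholing are needed to make this precise: one on the number of popular pairs sharing each difference, and one on the multiplicity with which pairs $(p,q) \in G'$ arise from triples $(x,y,a)$. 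Each layer costs at most an $O(\eta)$ factor in the ambiguity budget.

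The remaining conditions of an additive structure must then be verified. The bound on $|\Delta|$ is unchanged and the octuple bound $E_8(\Delta) \leq N^{15+O(\eta)}$ is inherited immediately, since it depends only on $\Delta$. The target size $|G'| \sim N^{7-\beta-O(\eta)}$ and the participation condition (that at least $N^{3-O(\eta)}$ elements of $\Delta$ appear in at least $N^{4-\beta-O(\eta)}$ pairs of $G'$) follow from the total count of constructed pairs, $\sim N^{9-2\alpha-O(\eta)}$, combined with applications of the large families principle (Lemma \ref{largefamilies}). The main obstacle will be managing two competing pathologies during pigeonholing: a single popular difference $d'$ could be hit by many popular pairs, inflating its representation count above $N^\beta$ (and hence increasing the final height), while simultaneously the same pair $(p,q)$ could arise from many triples $(x,y,a)$, deflating the count of distinct pairs in $G'$. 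Both extremes must be controlled simultaneously to keep the new height exactly at $\beta - 1$, and the non-smoothing hypothesis $E_8(\Delta) \leq N^{15+\eta}$ is the key tool for ruling out excessive concentration at either extreme, since a large such concentration would force too many additive octuples in $\Delta$.
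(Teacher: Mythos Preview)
Your approach is essentially the paper's: build $G',D'$ from the intersections $\Delta_G[x]\cap\Delta_G[y]$ for popular pairs $(x,y)$, and use the octuple bound to control the counts. The paper's execution is a bit cleaner than your two-layer pigeonholing, though, and it is worth seeing exactly how the non-smoothing hypothesis enters. Rather than pigeonholing on the multiplicity with which a pair $(p,q)$ arises, the paper lower-bounds the number of distinct differences $|D'|=|\{x-y:(x,y)\in P\}|$ directly by applying Cauchy--Schwarz (Lemma~\ref{cs}) to the difference map $P\to\Delta-\Delta$: if $|D'|$ were smaller than $N^{7-2\beta-O(\eta)}$, one would get too many additive quadruples $x-y=x'-y'$ among elements of $D$, and since each element of $D$ has $\sim N^{1+\alpha}$ representations in $\Delta$, each such $D$-quadruple lifts to $N^{4+4\alpha}$ octuples in $\Delta$, contradicting $E_8(\Delta)\lesssim N^{15+\eta}$. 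The same mechanism, run first with the cruder bound $|D'|\lesssim N^{6-\beta+O(\eta)}$, gives $\beta\geq 1-O(\eta)$, which you need but do not mention (it ensures the new height $\beta-1$ is nonnegative up to ambiguity). Once $|D'|\gtrsim N^{7-2\beta-O(\eta)}$ is in hand, your concern \#2 evaporates: for each $d'\in D'$ pick \emph{one} popular pair $(x,y)$ with $x-y=d'$ and take its $\sim N^\beta$ representatives $(a-y,a-x)$; distinct $a$'s yield distinct pairs, so $|G'|\gtrsim N^{7-\beta-O(\eta)}$ without any second pigeonholing.
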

We remark that this lemma contains the key use of the nonsmoothing hypothesis, which is hidden in the definition of ``additive structure".
\begin{proof}

We dyadically pigeonhole the equation (\ref{Komity}) to find $\beta$ so that there is a set
of at least $N^{9-2\alpha - \beta - 4\eta}$ pairs $(x,y)$ so that for each such pair,
we have $N^{\beta} \leq |\Delta_G[x] \cap \Delta_G[y]| < 2N^{\beta}$.
If it happens that $\beta-1 > \alpha-\mu$, then we are done. Otherwise, we will construct an
additive structure at height $\beta-1$.

Now any time that $a \in \Delta_G[x] \cap \Delta_G[y]$, this means that we can write $x=a-b$ and $y=a-c$.
Thus we have $x-y=c-b$. We have between $N^{\beta}$ and $2 N^{\beta}$ representations of the difference
$x-y$. It remains to determine how many such differences there are.

We have two distinct upper bounds on the number of such differences.  First there are  
$\lesssim N^{6-\beta + 2\eta}$,
since each difference is represented by $\sim N^{\beta}$ pairs in $\Delta \times \Delta$ and there
are only $N^{6+2\eta}$ such pairs. The second estimate is that there are  
$\lesssim N^{7-2\beta + O(\eta)}$
many such differences, because otherwise $E_4(\Delta)$ would be much larger than $N^7$ which would make
$E_8(\Delta)$ larger than $N^{15 + \eta}$. The first upper bound is most effective (ignoring ambiguity) when
$\beta < 1$ while the second is most effective when $\beta > 1$. Our plan (modulo ambiguity) is that
we shall rule out the case $\beta < 1$ and that we shall show that
the second upper bound is tight up to a factor of
$N^{O(\eta)}$. Both estimates will follow from the upper bound on $E_8(\Delta)$ and the Cauchy-Schwarz inequality,
namely Lemma \ref{cs}.

Since we have $N^{9-2\alpha-\beta -O(\eta)}$ pairs $(x,y)$ with at most $N^{6-\beta + O(\eta)}$ differences,
by the Cauchy-Schwarz inequality, there must be at least $N^{12-4\alpha-\beta-O(\eta)}$ additive quadruples in $D$, namely
$x-y=x^{\prime}-y^{\prime}$. (Here we let $S$ be the set of pairs $(x,y)$, we let $T$ be the set of differences
with $\sim N^{\beta}$ representations as difference of $\Delta$ and we let $\rho$ be the difference map,
$\rho(x,y)=x-y$. Then we can apply Lemma \ref{cs}.)
However since each difference $x,y$  can be represented in $N^{1+\alpha}$ ways as a difference in $\Delta$,
we can represent each quadruple in $D$ as an octuple in $\Delta$ in $N^{4+4\alpha}$ ways. Thus there are
at least $N^{16-\beta -O(\eta)}$ many such octuples which implies $\beta \geq 1 - O(\eta)$.

Thus we are in the regime where the estimate that there are at most $N^{7-2\beta + O(\eta)}$ many differences is 
most effective. Suppose that there were only $N^{7-2\beta - \gamma}$ many such differences with $\gamma >> \eta$
Then apply Cauchy-Schwarz again, we would see that there are at least $N^{11-4\alpha -O(\eta) + \gamma}$
many quadruples in $D$ which implies $N^{15-O(\eta) + \gamma}$ octuples in $\Delta$, a contradiction with the nonsmoothing hypothesis in the definition of ``additive structure".

Thus taking $D^{\prime}$ to be the differences $x-y$ obtained from $(x,y)$ so that 
$N^{\beta} \leq |\Delta[x] \cap \Delta[y]| < 2 N^{\beta}$ and taking $G^{\prime}$ to consist
of representatives of these differences coming from the intersections (as in the second paragraph of this proof), we obtain an additive structure
$(\Delta,G^{\prime},D^{\prime})$ with height
$ \beta - 1< \alpha-\mu$ and ambiguity $O(\eta)$.

\end{proof}

\begin{corollary} \label{comityatlast} Given an additive structure $(\Delta,G,D)$ at height $\alpha$ and with ambiguity
$\eta$ there is an additve structure $(\Delta,G,D)$ at height $\alpha^{\prime} \leq \alpha$ with ambiguity $\mu$ and
comity $\mu$  with $\mu \lesssim {1 \over  \log {1 \over \eta}}$. \end{corollary}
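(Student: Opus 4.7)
The plan is to iterate Lemma \ref{Iteration}. Fix in advance a target value $\mu$ of comity (to be chosen momentarily). Starting from the given additive structure with height $\alpha_0 := \alpha$ and ambiguity $\eta_0 := \eta$, apply Lemma \ref{Iteration}: either the current structure already has comity $\mu$ and we stop, or we obtain a new structure $(\Delta, G_1, D_1)$ with height $\alpha_1 \leq \alpha_0 - \mu$ and ambiguity $\eta_1 \leq C \eta_0$, where $C$ is a universal constant absorbing the $O(\eta)$ hidden in that lemma. We then apply the lemma again to the new structure, with the same target $\mu$, and repeat.

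The height of any additive structure is bounded above by an absolute constant (from $|\Delta_G[x]| \lesssim |\Delta| \lesssim N^{3+\eta_k}$, forcing $\alpha_k \leq 2 + O(\eta_k)$) and is bounded below by $-O(\eta_k)$ (by the same $\beta \geq 1 - O(\eta)$ sub-argument already carried out inside the proof of Lemma \ref{Iteration}, applied to the current structure). As long as the accumulated ambiguity stays below some absolute constant, this forces termination after at most $K \leq 3/\mu$ applications of Lemma \ref{Iteration}, at which point we have produced an additive structure of comity $\mu$, ambiguity $\eta_K \leq C^K \eta \leq C^{3/\mu}\eta$, and height $\alpha^{\prime} \leq \alpha$ (monotonicity is automatic since each step strictly decreases the height). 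To close the loop and achieve ambiguity $\mu$, we need $C^{3/\mu}\eta \leq \mu$. Taking logarithms, this reads $(3\log C)/\mu \leq \log(\mu/\eta)$, and since $\log(\mu/\eta) \geq \tfrac{1}{2}\log(1/\eta)$ whenever $\mu \geq \sqrt{\eta}$, it suffices to take $\mu = c/\log(1/\eta)$ for a sufficiently small absolute constant $c>0$. This yields the advertised bound $\mu \lesssim 1/\log(1/\eta)$.

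The only real obstacle is bookkeeping, and specifically verifying that the ambiguity-inflation constant $C$ hidden in Lemma \ref{Iteration} is truly universal --- independent of $\alpha$, of $\eta$, and of the iteration index --- so that after $K \sim 1/\mu$ iterations the total blowup is genuinely $C^{1/\mu}$ rather than something worse like a tower. Inspection of the proof of Lemma \ref{Iteration} shows that every inequality there loses at most an absolute constant power of $N^{\eta}$ (the dyadic pigeonhole step, both Cauchy--Schwarz applications, and the propagation of the non-smoothing hypothesis $E_8(\Delta)\lesssim N^{15+\eta}$), so this is automatic. The doubly-logarithmic quality $\mu \sim 1/\log(1/\eta)$ is the inevitable price of iterating a lemma whose loss at each step is only a constant factor in ambiguity while only a constant decrement in height: balancing $C^{K}\eta$ against $1/K$ gives exactly a $\log(1/\eta)$.
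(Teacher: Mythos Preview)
Your proposal is correct and follows essentially the same iteration as the paper's proof: apply Lemma \ref{Iteration} repeatedly with a fixed target comity $\mu$, note that the height drops by at least $\mu$ at each failure while ambiguity inflates by only a universal constant factor, and balance the resulting $C^{O(1/\mu)}\eta$ against $\mu$ to pin down $\mu \sim 1/\log(1/\eta)$. One small slip: in your final step the inequality $(3\log C)/\mu \le \tfrac12\log(1/\eta)$ forces $c \ge 6\log C$, so the constant $c$ in $\mu = c/\log(1/\eta)$ must be taken sufficiently \emph{large}, not small --- but this is harmless for the stated conclusion $\mu \lesssim 1/\log(1/\eta)$.
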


\begin{proof} We iteratively apply Lemma \ref{Iteration} with comity $\mu$ fixed by
$$\mu= {K \over \log {1 \over \eta}}$$
with $K$ a large constant,
and with the ambiguity increasing
by a constant factor $C$ in each iteration. Since $\alpha$ decreases by $\mu$ each time we don't find comity we need
only $\sim{1 \over \mu}$ iterations to achieve comity. At this point, we have ambiguity given by
$C^{{ \log ({1 \over \eta}) \over K} } \eta  << \mu$,
as long as $K$ was chosen sufficiently large.
\end{proof}

Now we begin to investigate what we can say about the shape of the set $H$ of 
all pairs $(b,c)$    in $\Delta \times \Delta$   having the property that $b-c$ has at least $N^{1+\alpha-O(\mu)}$
representations in $\Delta \times \Delta$
for $(\Delta,G,D)$ an additive structure
with height $\alpha$ and ambiguity and comity $\mu$. We will find that the set $H$ is rather thick in a product
set whose projection has size $N^{3-\alpha-O(\mu)}$. 

\begin{lemma} \label{blocks}  Let $(\Delta,G,D)$ be an additive structure
with height $\alpha$ and ambiguity and comity $\mu$. Then there is a subset $B \subset \Delta$ with
$|B| \gtrsim N^{3-\alpha-O(\mu)}$ so that there is a set $H \subset B \times B$ with 
$$|H| \gtrsim N^{-O(\mu)} |B|^2,$$
so that for any $(b,c) \in H$, the difference $b-c$ has $N^{1+\alpha - O(\mu)}$ representations in $\Delta \times
\Delta$.  \end{lemma}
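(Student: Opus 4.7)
The plan is to extract both $B$ and $H$ from the $G$-neighborhood of a single carefully chosen vertex $a^{*}\in\Delta$. For each $a\in\Delta$, set $D(a)=\{x\in D:a\in\Delta_{G}[x]\}$, $B(a)=a-D(a)\subset\Delta$ (so $|B(a)|=|D(a)|$), and $S(a)=\{(x,y)\in D(a)^{2}:(x,y)\text{ is a comity pair}\}$. For each $(x,y)\in S(a)$ the pair $(a-x,a-y)$ lies in $B(a)\times B(a)$; distinct $(x,y)$ produce distinct pairs, and the common difference $y-x$ has at least $N^{1+\alpha-\mu}$ representations in $\Delta\times\Delta$, one for every element of $\Delta_{G}[x]\cap\Delta_{G}[y]$. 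Counting triples $(a,x,y)$ with $a\in\Delta_{G}[x]\cap\Delta_{G}[y]$ and $(x,y)$ comity therefore yields
$$
\sum_{a\in\Delta}|S(a)|=\sum_{(x,y)\text{ comity}}|\Delta_{G}[x]\cap\Delta_{G}[y]|\gtrsim N^{9-2\alpha-O(\mu)}.
$$

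Next I will pin down a dyadic level for $|D(a)|$. Pigeonholing the above sum over bins $T_{s}=\{a:s\leq|D(a)|<2s\}$ produces an $s$ with $\sum_{a\in T_{s}}|S(a)|\gtrsim N^{9-2\alpha-O(\mu)}$. Two crude estimates then constrain $s$. First, $|S(a)|\leq|D(a)|^{2}\lesssim s^{2}$ for $a\in T_{s}$ and $|T_{s}|\leq|\Delta|\lesssim N^{3+\mu}$, which forces $s\gtrsim N^{3-\alpha-O(\mu)}$. Second, invoking the bound inherited from the robust additively non-smoothing hypothesis that each $a\in\Delta$ participates in at most $N^{4+O(\mu)}$ quadruples of the form $\pm a\pm b=\pm c\pm d$ with $b,c,d\in\Delta$: since each $x\in D(a)$ comes with $\sim N^{1+\alpha}$ representations as a difference in $G\subset\Delta\times\Delta$, each producing a distinct such quadruple involving $a$, we get $|D(a)|\cdot N^{1+\alpha}\lesssim N^{4+O(\mu)}$, hence $|D(a)|\lesssim N^{3-\alpha+O(\mu)}$ pointwise. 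Therefore $s\lesssim N^{3-\alpha+O(\mu)}$ as well, pinning $s$ to a narrow dyadic window.

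Finally choose $a^{*}\in T_{s}$ with $|S(a^{*})|$ at least the $T_{s}$-average, which is $\gtrsim N^{9-2\alpha-O(\mu)}/|\Delta|\gtrsim N^{6-2\alpha-O(\mu)}$, and take $B=B(a^{*})$ together with $H=\{(a^{*}-x,a^{*}-y):(x,y)\in S(a^{*})\}\subset B\times B$. Then $|B|=|D(a^{*})|\in[s,2s)$ lies in $[N^{3-\alpha-O(\mu)},N^{3-\alpha+O(\mu)}]$, so $|B|^{2}\leq N^{6-2\alpha+O(\mu)}$, while
$$
|H|=|S(a^{*})|\gtrsim N^{6-2\alpha-O(\mu)}\geq N^{-O(\mu)}|B|^{2},
$$
and each $(b,c)\in H$ has $b-c$ with at least $N^{1+\alpha-\mu}$ representations in $\Delta\times\Delta$ by construction. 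The main obstacle is the upper bound on $s$: without the $N^{4+O(\mu)}$ per-vertex quadruple input the pigeonhole level could be as large as $|\Delta|\sim N^{3}$, which would blow $|B|^{2}$ up by a factor $N^{2\alpha}$ relative to $|H|$ and destroy the required density.
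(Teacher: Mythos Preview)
Your argument is clean and the counting identity $\sum_a |S(a)| = \sum_{(x,y)\text{ comity}} |\Delta_G[x]\cap\Delta_G[y]|$ together with the lower bound on $s$ are correct. The issue is the upper bound on $s$. You obtain $|D(a)|\lesssim N^{3-\alpha+O(\mu)}$ from the per-vertex quadruple bound, and you flag this as ``inherited from the robust additively non-smoothing hypothesis.'' But that per-vertex bound is part of the definition of a robust additively non-smoothing set, \emph{not} of an additive structure; the lemma as stated assumes only an additive structure with height $\alpha$ and ambiguity and comity $\mu$, whose definition gives $|\Delta|\lesssim N^{3+\mu}$, $|G|\sim N^{6-\alpha-\mu}$, $|\Delta_G[x]|\sim N^{1+\alpha}$, and the octuple bound $E_8(\Delta)\lesssim N^{15+\mu}$ --- none of which yields a pointwise bound on $|D(a)|$. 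From $\sum_a |D(a)|=|G|\sim N^{6-\alpha}$ you only get an average bound, which still leaves $|S(a^*)|/|B(a^*)|^2\gtrsim N^{3-\alpha-O(\mu)}/s$, so the density conclusion fails without $s\lesssim N^{3-\alpha+O(\mu)}$. In the paper's applications the additive structure does arise from a robust non-smoothing set and the bound is available, so your proof works \emph{in context}; but it does not prove the lemma as stated.

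The paper's proof sidesteps this completely by pigeonholing in a different order. Instead of choosing $a$ first, it first fixes a single $x\in D$ with $\gtrsim N^{3-\alpha-O(\mu)}$ comity partners $y$, then pigeonholes over $a\in\Delta_G[x]$ --- a set whose size $\sim N^{1+\alpha}$ \emph{is} controlled by the additive-structure hypotheses --- to find $a$ lying in many $\Delta_G[y]$. It then simply truncates the resulting set $Y$ to have exactly $N^{3-\alpha-O(\mu)}$ elements, so $|B|=|a-Y|$ is pinned down by fiat rather than by a degree bound. The density of $H$ in $B\times B$ is then produced not from the original comity pairs but by applying Lemma~\ref{Carbery} to the sets $\Delta_G[y]\cap\Delta_G[x]$, $y\in Y$, inside the small ambient space $\Delta_G[x]$: since each has relative density $\gtrsim N^{-O(\mu)}$ there, Cauchy--Schwarz forces $\gtrsim N^{-O(\mu)}|Y|^2$ pairs $(y,y')$ with $|\Delta_G[y]\cap\Delta_G[y']|\gtrsim N^{1+\alpha-O(\mu)}$. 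Your approach is more direct (no Carbery step) and perfectly good once the per-vertex bound is added to the hypotheses; the paper's approach is more self-contained but trades the extra hypothesis for an extra Cauchy--Schwarz.
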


\begin{proof}  From the hypotheses, we have that
$$\sum_{x \in D} \sum_{y \in D} |\Delta_G[x] \cap \Delta_G[y] | \gtrsim N^{9-2 \alpha - O(\mu)},$$
and that there are at least $N^{8-3\alpha -O(\mu)}$ pairs $(x,y)$ for which
$$|\Delta_G[x] \cap \Delta_G[y] | \gtrsim N^{1+\alpha - O(\mu)}.$$

Using the pigeonhole principle, we fix one value of $x$ for which there are $N^{3-\alpha - O(\mu)}$ choices of $y$
so that 
$$|\Delta_G[x] \cap \Delta_G[y] | \gtrsim N^{1+\alpha - O(\mu)}.$$

Again using the pigeonhole principle, we find an $a \in \Delta$ and a set $Y \subset D$ so that
$a \in \Delta_G[y]$ for every for every $y \in Y$, so that $|Y| = N^{3-\alpha - O(\mu)}$ and so that for each
$y \in Y$, we have
$$|\Delta_G[x] \cap \Delta_G[y] | \gtrsim N^{1+\alpha - O(\mu)}.$$

We notice that by definition $a-Y \subset \Delta$. We choose $B=a-Y$
Finally since each $\Delta_G[y]$ has relative density $N^{-O(\mu)}$
in $\Delta_G[x]$, we have by Cauchy-Schwarz (Lemma \ref{Carbery}) that
$$\sum_{y \in Y}  \sum_{y^{\prime} \in Y} |\Delta_G[y] \cap \Delta_G[y^{\prime}]| \gtrsim |Y|^2 N^{1+ \alpha - O(\mu)}.$$
This implies that $B$ satisfies the conclusion of the lemma. The reason is that by Lemma \ref{largefamilies},
we have a set $\tilde Y$ of pairs $y,y^{\prime}$ so that $|\Delta[y] \cap \Delta[y^{\prime}]| \gtrsim
N^{1+\alpha-O(\mu)}$. This implies that $a-y^{\prime} - (a-y) = y-y^{\prime}$ has at least $N^{1+\alpha-O(\mu)}$ representatives
as a difference of two elements of $\Delta$.
\end{proof}

Now we are going to use Lemma \ref{blocks} repeatedly to show that for any robust additively non-smoothing
set of size $\delta$ we can find an additive structure  of ambiguity $\eta$ with 
$\eta \lesssim {1 \over  \log {1 \over \delta}}$ which breaks into dense blocks.

\begin{lemma} \label{chunkiness} Let $\Delta$ be a robust additively non-smoothing set of strength $\delta$.
Choose $\mu \sim {1 \over  \log {1 \over \delta}}$. Then for some $0 \leq \alpha \leq 1$, there is
an additive structure $(\Delta, G, D)$ of height $\alpha$ and
ambiguity $\mu$ and disjoint subsets $B_1,\dots B_K$ of $\Delta$
with each $B_j$ satisfying $|B_j| \lesssim N^{3-{\alpha} + O(\mu)}$ so that
$$G \subset \bigcup_{j=1}^K B_j \times B_j.$$
Note that since we are requiring that $(\Delta, G, D)$ be an additive structure, this requires
$|G| \gtrsim N^{6-\alpha - O(\mu)}$ which implies $K \gtrsim N^{\alpha-O(\mu)}$.
\end{lemma}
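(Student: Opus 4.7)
The plan is to extract the blocks $B_j$ by iteratively applying Lemma \ref{blocks}. We begin by invoking Proposition \ref{structure1} followed by Corollary \ref{comityatlast} to obtain an initial additive structure $(\Delta, G_0, D_0)$ at some height $\alpha$ with ambiguity and comity both $O(\mu)$, where $\mu \sim 1/\log(1/\delta)$. If this initial $\alpha$ exceeds $1$, one or more additional passes through Lemma \ref{Iteration} drive it into $[0,1]$; in the degenerate case $\alpha \approx 0$ one can simply take $K=1$ and $B_1 = \Delta$. The value of $\alpha$ produced this way is the one that will appear in the conclusion and remains fixed throughout the argument.

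Greedily produce the blocks as follows. Set $\Delta^{(0)} = \Delta$. At stage $i \geq 1$, provided we have so far removed at most $(2/5)|\Delta|$ elements, the robust non-smoothing hypothesis (\ref{robust}) provides $E_4 \gtrsim N^{7-\delta}$ on any sufficiently large symmetric subset of $\Delta^{(i-1)}$, so re-running Proposition \ref{structure1} and Corollary \ref{comityatlast} on the restriction of $G_0$ to $\Delta^{(i-1)} \times \Delta^{(i-1)}$ produces an additive structure of the same height $\alpha$ with ambiguity and comity still $O(\mu)$. Apply Lemma \ref{blocks} to this restricted structure to obtain a block $B_i \subset \Delta^{(i-1)}$ with $|B_i| \lesssim N^{3-\alpha+O(\mu)}$ together with a subset $H_i \subset B_i \times B_i$ of relative density $\gtrsim N^{-O(\mu)}$, each of whose pairs differs by an element with $\gtrsim N^{1+\alpha-O(\mu)}$ representations in $\Delta \times \Delta$. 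Set $\Delta^{(i)} = \Delta^{(i-1)} \setminus B_i$ and continue. Since each block has size $\sim N^{3-\alpha+O(\mu)}$ and $|\Delta| \sim N^{3+O(\mu)}$, the iteration persists for $K \gtrsim N^{\alpha - O(\mu)}$ steps before running out of room.

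Define the final data by $G = \bigcup_{i=1}^K H_i$ and $D = \{a-b : (a,b) \in G\}$. The $B_i$ are disjoint by construction, and $G \subset \bigcup_i B_i \times B_i$ because each $H_i$ lies in $B_i \times B_i$. The size bound $|G| \sim N^{6-\alpha-O(\mu)}$ follows from summing $|H_i| \gtrsim N^{6-2\alpha-O(\mu)}$ over $K \gtrsim N^{\alpha-O(\mu)}$ blocks; the representation count for each element of $D$ is furnished by Lemma \ref{blocks}; the count of $a \in \Delta$ participating in many sums follows from a routine double-count; and the octuple bound is inherited directly from $\Delta$.

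The principal obstacle is controlling the cumulative $N^{O(\mu)}$ losses across the iteration so that the final ambiguity is still $O(\mu)$. Each restriction to $\Delta^{(i)}$ discards the representations of every difference that crossed into $B_i$, and the dyadic pigeonholing inside Corollary \ref{comityatlast} must be redone on the smaller set at each step. Both costs are tolerable because the robustness clause (\ref{robust}) of the non-smoothing hypothesis shields the $E_4$ lower bound against the removal of up to $(2/5)|\Delta|$ elements; this is in fact the only place in the chunkiness argument where the strength of the robustness hypothesis is essential. A minor additional subtlety, worth flagging, is that the $B_i$ need not be symmetric, so to invoke the symmetric-subset version of robustness one should symmetrise each $B_i$ to $B_i \cup (-B_i)$ before removal, which costs only another factor of $2$ in size bounds.
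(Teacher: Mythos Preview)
Your overall plan matches the paper's---iterate Lemma \ref{blocks} on successively stripped subsets of $\Delta$, using robustness to keep the $E_4$ lower bound alive, and symmetrize before removal---but there is a genuine gap in your handling of the height parameter $\alpha$.

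You assert that re-running Proposition \ref{structure1} and Corollary \ref{comityatlast} on the restricted set $\Delta^{(i-1)}$ ``produces an additive structure of the same height $\alpha$.'' This is not justified. Proposition \ref{structure1} selects $\alpha$ by a dyadic pigeonholing on the multiplicity function $m$, and once $B_1,\dots,B_{i-1}$ have been deleted the distribution of multiplicities on the remaining set can shift, so the pigeonhole may land on a different scale. Corollary \ref{comityatlast} then drives the height down further, by an amount that depends on how many passes through Lemma \ref{Iteration} are needed on \emph{that} particular structure. Nothing forces the same $\alpha$ to recur, and indeed the paper explicitly notes: ``one difficulty is that the disjoint sets $B$ which we chose do not all have the same $\alpha$.''

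The paper's fix is to let each block $B_j$ carry its own height $\alpha_j$, run the greedy extraction until half of $\Delta$ is exhausted, and only \emph{then} dyadically pigeonhole over the values $\alpha_j$ to isolate a large subcollection of blocks sharing a common $\alpha$ (at the cost of a single $\log N$ factor). A further pigeonholing on the representation counts in $\bigcup_j H_j$ then produces the common $D$ and $G$. This post-hoc pigeonholing is the step you are missing; without it your statements ``each block has size $\sim N^{3-\alpha+O(\mu)}$'' and ``the iteration persists for $K \gtrsim N^{\alpha-O(\mu)}$ steps'' refer to no single $\alpha$. Incidentally, your worry about cumulative $O(\mu)$ losses largely evaporates under the paper's scheme: since the full machinery is applied \emph{fresh} to each $\Delta^{(i-1)}$ rather than by restricting a single $G_0$, the ambiguity incurred at each stage is independent and does not compound across blocks.
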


\begin{proof} Using Proposition \ref{structure1} , Corollary \ref{comityatlast}, and Lemma \ref{blocks}.
We can find a subset $B_1$ of $\Delta$ so that for some choice of $\alpha$, it has size at most
$N^{3-\alpha+O(\mu)}$ and nevertheless $B_1 \times B_1$ contains at least $N^{6-2\alpha-O(\mu)}$ pairs
whose differences have $N^{1+\alpha-O(\mu)}$ representations as differences in $\Delta$.

Having done this, we use the robustness property of $\Delta$ to apply the same argument to $\Delta \backslash
(B_1 \cup -B_1)$. We continue removing sets from $\Delta$ until we have exhausted half of $\Delta$. Now one difficulty
is that the disjoint sets $B$ which we chose do not all have the same $\alpha$. We use dyadic pigeonholing
to resolve this for only a small cost in the number of sets. We  call these sets
$B_1,\dots, B_K$. Now Lemma \ref{blocks} guarantees us in each $B_j \times B_j$, a subset $H_j$
of cardinality at least $N^{6-2\alpha - O(\mu)}$ so that each difference in $H_j$ is represented
in $\Delta \times \Delta$ at least $N^{1+\alpha-O(\mu)}$ times. We denote by $D_{\alpha}$ the set of
differences represented in $\Delta \times \Delta$ at least $N^{1+\alpha-O(\mu)}$ times, and note that
$$|D_{\alpha}| \lesssim N^{5-2\alpha+ O(\mu)},$$
lest there be enough quadruples in $\Delta$ to violate the additive non-smoothing condition.
Using the large families principle and pigeonholing, we find some $\alpha^{\prime} \gtrsim \alpha -O(\mu)$ so that at least $N^{5-2\alpha-O(\mu)}$
differences are represented at least $N^{1+\alpha^{\prime}}$ many times in $\cup_j H_j $. We denote
this set of differences as $D^{\prime}_{\alpha}$. We let $D$ be $D^{\prime}_{\alpha}$ and let $G$
be a subset of $\cup_j  H_j $ consisting of $N^{1+\alpha^{\prime}}$ representatives of
each difference in $D$.

\end{proof}

Our goal now will be to use will be to use Lemma \ref{chunkiness} to find almost additively closed sets $E$ of size at least
$N^{1-f(\delta)}$ inside robust non-additively smoothing sets of strength $\delta$. Here $f:[0,1] \longrightarrow
[0,\infty)$ is some function with $\lim_{t \longrightarrow 0} f(t) =0$. We will be employing such functions
from now on in the paper. They, like constants, will change from line to line.

The project of finding additively closed sets will be easiest when we have
additive structures of height essentially zero having ambiguity and comity $\mu$. For this reason, we are about to define a stylized 
structure which generalizes this situation. We will eventually use the generalized version, replacing
$\Delta$ with the blocks $B_j$.

We will now define a $\mu$-full stylized $\rho$-structure which is $\tau$-energetic and has ambiguity and comity $\mu$. 
(The error exponents $\mu$ are all the same.) This will be a set $(\Delta^{\prime},G,D)$
where 
$$|\Delta^{\prime}| \sim N^{\rho},$$
(hence a $\rho$-structure)
where $G \subset \Delta^{\prime} \times \Delta^{\prime}$ with
$$|G| > N^{2 \rho - O(\mu)},$$
(this was the $\mu$-fullness), where $D$ is the set of differences in pairs in $G$ and each
difference represented $\gtrsim N^{\tau}$ and $\lesssim N^{\tau + O(\mu)}$ times, hence $\tau$-energetic. 
Finally we assume that there are  at least $N^{3 \rho - \tau - O(\mu)}$
pairs $(x,y) \in D \times D$ so that 
$$|\Delta^{\prime}[x] \cap \Delta^{\prime}[y] | \gtrsim  N^{\tau - O(\mu)},$$
which is of course the $\mu$-comity. 

We shall say that a set $K$ is $\mu$-additively closed provided that
$$|K-K| \lesssim N^{O(\mu)} |K|,$$ 
as in Section \ref{additivelyclosed}.

\begin{lemma} \label{kickass} There is a function $f:[0,1] \longrightarrow [0,\infty)$ with
$$\lim_{t \longrightarrow 0} f(t)=0,$$
so that the following holds.
Let $(\Delta^{\prime},G,D)$ be a $\mu$-full stylized $\rho$-structure which is 
$\tau$-energetic and has ambiguity and comity $\mu$. Then there is an $f(\mu)$ additively closed set $K$
with
$$  |K| \gtrsim N^{\tau-f(\mu)} $$
and a set $X$ so that
$$|X| \lesssim N^{f(\mu)} {|\Delta^{\prime}| \over |K|},$$
so that
$$|\Delta^{\prime}  \cap ( X+K)| \gtrsim N^{\rho-f(\mu)}.$$
\end{lemma}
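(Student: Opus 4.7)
The plan is to apply the asymmetric Balog--Szemer\'edi--Gowers theorem (Lemma \ref{absg}) with $B = \Delta'$ and $C = \Delta'[x_0]$ for a judiciously chosen $x_0 \in D$. Since $|\Delta'[x_0]| \sim N^{\tau}$ up to $N^{O(\mu)}$ factors (this is precisely the $\tau$-energetic condition) and since $|B|/|C| \leq N^{\rho} \lesssim N^{3+O(\mu)} \ll N^{10}$, a BSG application with input $\eta = O(\mu)$ will output an $f(\mu)$-additively closed $K$ with $|K| \gtrsim N^{-O(\mu)} |C| \gtrsim N^{\tau - f(\mu)}$, a set $X$ with $|X| \lesssim N^{O(\mu)} |B|/|C|$, and $|\Delta' \cap (X+K)| \gtrsim N^{-O(\mu)} |\Delta'| = N^{\rho - f(\mu)}$, which is essentially the conclusion of the lemma.

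The choice of $x_0$ is dictated by comity. Comity provides $\gtrsim N^{3\rho - \tau - O(\mu)}$ pairs $(x,y) \in D \times D$ with $|\Delta'[x] \cap \Delta'[y]| \gtrsim N^{\tau - O(\mu)}$, and since $|D| \lesssim N^{2\rho - \tau + O(\mu)}$, pigeonholing one coordinate produces an $x_0 \in D$ for which
\[ Y \;:=\; \{y \in D : |\Delta'[x_0] \cap \Delta'[y]| \gtrsim N^{\tau - O(\mu)}\} \]
has $|Y| \gtrsim N^{\rho - O(\mu)}$. The BSG additive quadruples then fall out directly: for each $y \in Y$ and each ordered pair $(a_1, a_2) \in (\Delta'[x_0] \cap \Delta'[y])^2$, set $c_1 := a_1$, $c_2 := a_2$ (both in $C$), and $b_1 := a_2 - y$, $b_2 := a_1 - y$, both of which lie in $B = \Delta'$ because $a_i \in \Delta'[y]$ forces $a_i - y \in \Delta'$. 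One checks $b_1 + c_1 = b_2 + c_2$, and the map $(y, a_1, a_2) \mapsto (b_1, c_1, b_2, c_2)$ is injective since $y = c_2 - b_1$ recovers $y$ (and then $a_i = c_i$). Hence the number of distinct quadruples is at least $|Y| \cdot N^{2\tau - O(\mu)} \gtrsim N^{\rho + 2\tau - O(\mu)} = N^{-O(\mu)} |B| |C|^2$, exactly the BSG hypothesis.

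The main obstacle is ambiguity bookkeeping rather than a conceptual difficulty. Converting the BSG output $|X| \lesssim N^{O(\mu)} |B|/|C|$ into the required $|X| \lesssim N^{f(\mu)} |\Delta'|/|K|$ requires the matching upper bound $|K| \lesssim N^{O(\mu)} |C|$, which is not stated explicitly in the formulation of Lemma \ref{absg} quoted above; it follows however from the Tao--Vu BSG construction, where $K$ is built inside a Freiman-type approximate-subgroup envelope of $C$ (and if desired one may intersect $K$ with a translate guaranteed by the last clause of Lemma \ref{absg} to enforce this). Absorbing every $N^{O(\mu)}$ factor, as well as the ambiguity of BSG's output parameter, into a single function $f$ tending to zero with $\mu$ then yields the stated conclusion.
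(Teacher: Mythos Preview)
Your proposal is correct and follows essentially the same route as the paper: pigeonhole on one coordinate of the comity pairs to find $x_0$ with a large set $Y$, then produce $\gtrsim N^{\rho+2\tau-O(\mu)}$ mixed additive quadruples between $B=\Delta'$ and $C=\Delta'[x_0]$ and invoke asymmetric Balog--Szemer\'edi--Gowers. The only cosmetic difference is that the paper inserts a second pigeonhole (choosing a fixed $a\in\Delta'$ lying in many $\Delta'_G[y]$) and then applies Cauchy--Schwarz (Lemma~\ref{cs}) to the resulting $N^{\rho+\tau-O(\mu)}$ triples, whereas you bypass both steps by directly taking ordered pairs $(a_1,a_2)$ in each intersection $\Delta'[x_0]\cap\Delta'[y]$ and checking injectivity of $(y,a_1,a_2)\mapsto(b_1,c_1,b_2,c_2)$; this is a mild streamlining of the same idea. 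Your remark about the implicit upper bound $|K|\lesssim N^{O(\mu)}|C|$ needed to convert $|X|\lesssim N^{O(\mu)}|B|/|C|$ into $|X|\lesssim N^{f(\mu)}|\Delta'|/|K|$ is well taken---the paper's proof simply asserts ``the conclusion follows directly'' and does not spell this out, but as you note it is part of the standard Tao--Vu formulation of Lemma~\ref{absg}.
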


\begin{proof} We proceed essentially as in the proof of Lemma \ref{blocks}. We find $x \in D$ so that there
is a set $Y$ of $y \in D$ with $|Y| \gtrsim N^{\rho-O(\mu)}$ so that 
$$|\Delta^{\prime}_G [x] \cap \Delta^{\prime}_G [y]| \gtrsim N^{\tau - O(\mu)},$$
for every $y \in Y$. As before, we use the pigeonhole principle to find $a \in \Delta^{\prime}$
so that there is a subset $Y_a$ of $Y$ so that for each $y \in Y_a$, we have
$$a \in \Delta^{\prime}_G[y]$$
and so that
$$|Y_a| \gtrsim N^{\rho-O(\mu)}.$$
However $Y_a \subset a-\Delta^{\prime}$.  Thus we think of $Y_a$ as a dense part of a translate of $-\Delta^{\prime}$.
Now we know that
$$\sum_{y \in Y_a} |\Delta^{\prime}_G[x] \cap \Delta^{\prime}_G[y]| \gtrsim N^{\rho + \tau - O(\mu)}.$$

This precisely means that there are $N^{\rho + \tau - O(\mu)}$ triples $(b,c,d)$ with $b,c \in \Delta^{\prime}$ and 
$d \in \Delta^{\prime}[x]$ with $a-b=d-c$ with $a$ still fixed. Applying Cauchy-Schwarz (see Lemma \ref{cs}) we find
$N^{\rho + 2 \tau -O(\mu)}$ quadruples $(d,c,d^{\prime},c^{\prime})$ with $d,d^{\prime} \in \Delta^{\prime}[x]$
and $c,c^{\prime} \in \Delta^{\prime}$. As it happens, this is precisely the hypothesis of the 
asymmetric Balog-Szemeredi-Gowers theorem (Theorem \ref{absg})  applied to $\Delta^{\prime}$
and $\Delta_G[x]$. The conclusion  follows directly. 

\end{proof}

We are now prepared to state the main result of this section.

\begin{theorem} \label{structuretheory}  Let $\Delta$ be a robust additively non-smoothing set of strength $\delta$.
As before choose $\mu \sim {1 \over  \log {1 \over \delta}}$. There is $f:[0,1] \longrightarrow [0,\infty)$ with
$$\lim_{t \longrightarrow 0} f(t) =0,$$
so that for some $\gamma \geq 0$, there
is an $f(\mu)$-additively closed set $K$ with 
$$|K| \gtrsim N^{1+\gamma - f(\mu)},$$
contained in $\Delta$.
In the event that we must have $\gamma=O(f(\mu))$,
for some $0 \leq \alpha \leq 1$ , we may find pairwise disjoint subsets $B_1,\dots,B_M \subset \Delta$ with 
$M \gtrsim N^{\alpha - O(\mu)}$
so that for each integer $1 \leq j \leq m$, we have
$$N^{3-\alpha - O(\mu)} \lesssim |B_j| \lesssim N^{3-\alpha + O(\mu)},$$
and moreover we find for each $j$ a $\mu$-additively closed set $K_j$ with
$$N^{1 - f(\mu)} \lesssim |K_j| \lesssim N^{1 + f(\mu)},$$
together with a set $X_j$ with
$$N^{2-\alpha - f(\mu)} \lesssim |X_j| \lesssim N^{2-\alpha+ f(\mu)},$$
so that
$$|B_j \cap (X_j+K_j)| \gtrsim N^{3-\alpha-f(\mu)}.$$
Further, there is a set $D$ with $|D| \gtrsim N^{5-2\alpha - f(\mu)}$ so that
each element of $D$ has at least $N^{1+\alpha-f(\mu)}$ representations as a difference of elements of $\Delta$
and so that for each $j$, the set of 4-tuples $Q_j=\{(k_1,d_1,k_2,d_2): k_1,k_2 \in K_j,d_1,d_2 \in D: k_1-d_1
=k_2-d_2\}$ satisfies
$$|Q_j|\gtrsim N^{7-2\alpha-f(\mu)}.$$
Moreover we may choose $K_j$ to be contained in the set of differences  having at least
$N^{5-2\alpha-f(\mu)}$ representations as a difference between elements of $D$.
\end{theorem}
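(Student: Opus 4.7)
The plan is to apply the structural lemmas of this section in sequence and then to extract the extra assertions about $D$ and $Q_j$ by unwinding the constructions. Specifically, Proposition~\ref{structure1} applied to $\Delta$ produces an additive structure $(\Delta,G_0,D_0)$ at some height $\alpha \in [0,1]$ with ambiguity $O(\delta)$; Corollary~\ref{comityatlast} upgrades this to ambiguity and comity $\mu \sim 1/\log(1/\delta)$; and Lemma~\ref{chunkiness} refines it into the pairwise disjoint blocks $B_1,\dots,B_M \subset \Delta$ with $|B_j| \sim N^{3-\alpha}$, $M \gtrsim N^{\alpha}$, together with the difference set $D$ of size $\gtrsim N^{5-2\alpha}$ whose elements each have $\sim N^{1+\alpha}$ representations in $\Delta \times \Delta$ (all up to $N^{O(\mu)}$ factors).

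On each block the data above form a $\mu$-full stylized $\rho$-structure with $\rho = 3-\alpha$ and $\tau = 1+\alpha$, so Lemma~\ref{kickass} produces an $f(\mu)$-additively closed set $\tilde K_j$ of size $\gtrsim N^{1+\alpha-f(\mu)}$ together with a covering set $\tilde X_j$ of size $\lesssim N^{2-2\alpha+f(\mu)}$ satisfying $|B_j \cap (\tilde X_j + \tilde K_j)| \gtrsim N^{3-\alpha-f(\mu)}$. For the first conclusion, pick any block and pigeonhole over the translates in $\tilde X_j$ to find $x \in \tilde X_j$ with $|(x + \tilde K_j) \cap B_j|$ comparable to $|\tilde K_j|$; taking $K := (x + \tilde K_j) \cap B_j \subset \Delta$ gives an $f(\mu)$-additively closed subset of $\Delta$ of size $\gtrsim N^{1+\alpha-f(\mu)}$, so we may set $\gamma := \alpha$. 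When $\gamma$ is forced to be $O(f(\mu))$, i.e.\ when no larger additively closed subset of $\Delta$ is available, $\alpha = O(f(\mu))$, so setting $K_j := \tilde K_j$ and $X_j := \tilde X_j$ immediately gives the block-level sizes claimed in the theorem.

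The remaining claims---the energy bound $|Q_j| \gtrsim N^{7-2\alpha-f(\mu)}$ and the assertion that each $k \in K_j$ is represented $\gtrsim N^{5-2\alpha-f(\mu)}$ times as a difference of elements of $D$---are obtained by unwinding the proof of Lemma~\ref{blocks}. There $B_j$ is realized as $a_j - Y_j$ for some $Y_j \subset D$, so $\tilde K_j$ shifted by $-a_j$ is a dense subset of $D$ itself; thus each element of $K_j$ corresponds, modulo translation, to an element of $D$. A final pruning of $K_j$ to its heavy-representation part, justified by averaging against $D-D$ and using the non-smoothing upper bound $E_8(\Delta) \lesssim N^{15+\delta}$ to control the representation function, forces each remaining $k \in K_j$ to have the required number of representations in $D - D$. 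A Cauchy-Schwarz step (Lemma~\ref{cs}) applied to the difference map $(k,d) \mapsto k - d$ on $K_j \times D$ then upgrades this pointwise count to the energy lower bound $|Q_j| \gtrsim N^{7-2\alpha-f(\mu)}$.

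The principal obstacle I foresee is the simultaneous maintenance of two competing properties on $K_j$: $f(\mu)$-additive closure (produced by the BSG step of Lemma~\ref{kickass}) and containment in the heavy part of $D - D$ (needed for the $Q_j$ bound and the final containment claim). Each averaging and pigeonholing step costs a factor of $N^{f(\mu)}$, and the argument must be arranged so that the accumulated losses remain absorbed into the single error exponent $f(\mu)$ appearing in the statement.
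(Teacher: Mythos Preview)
Your plan conflates two distinct height parameters, and this breaks the argument at a crucial point. The height $\alpha$ from Lemma~\ref{chunkiness} records that each $d \in D$ has $\sim N^{1+\alpha}$ representations in all of $\bigcup_j B_j \times B_j$; it does \emph{not} say that a single block $B_j$ carries $N^{1+\alpha}$ representations of each difference. In fact, with $\sim N^{\alpha}$ blocks the average block carries only $\sim N^{1}$ representations. So when you assert that each block yields a stylized structure with $\tau = 1+\alpha$, this is unjustified: the block-level energy is $1+\gamma$ for some $\gamma$ that a priori can be anywhere in $[0,\alpha]$. The paper spends real work here, redoing the Iteration/comity argument \emph{inside the block decomposition} to pin down $\gamma$ and to obtain $O(\mu)$-comity for $(B_j, G\cap(B_j\times B_j), D_{j,\gamma})$ before Lemma~\ref{kickass} can even be invoked. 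Your proposal skips this step entirely.

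A direct casualty is your claim that $\gamma = O(f(\mu))$ forces $\alpha = O(f(\mu))$. This is false and would trivialize the second half of the theorem: the whole point of the $\gamma=0$ case is that $\alpha$ may remain large while the block-level energy is essentially $1$, giving $|K_j| \sim N^{1}$ and $|X_j| \sim N^{2-\alpha}$ as stated. Finally, your sketch for the $Q_j$ bound and the containment of $K_j$ in the heavy part of $D-D$ is far too light. The paper does not get these by ``unwinding Lemma~\ref{blocks}''; it passes through an intermediate set $K_{1,j}$ of popular differences of $B_j$, builds a large subset $D'\subset D$ as a disjoint union of near-full translates $B_a'$, transfers the quadruple count from $B_a$ to $D'$, prunes to elements with many quadruples against $D'$, and only then reapplies asymmetric BSG. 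Your single Cauchy--Schwarz step on $(k,d)\mapsto k-d$ cannot substitute for this, because without the covering construction you have no lower bound on the representation function of $K_j$ in $D-D$ to feed into Lemma~\ref{cs}.
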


\begin{proof} We apply Lemma \ref{chunkiness} and restrict our attention to
$$G \subset \bigcup_{j=1}^K B_j \times B_j$$
where the $B_j$ are the blocks obtained there. 
Now to $G$, we apply the argument used in
the proof of Lemma \ref{Iteration}.

That is, for any element $x \in -(G)$, we study
$$\Delta_G[x] = \{a \in \Delta:  \exists b \in \Delta:  (a,b) \in G;  \quad a-b=x \}.$$
Here $-(G) = \{ a-b \colon (a,b)\in G\}$.
Then, we observe
$$\sum_{x,y \in -(G)} |\Delta_G[x] \cap \Delta_G[y]| \gtrsim N^{9-2\alpha - O(\mu)},$$
and we observe as before that there is some $1+\alpha \geq \beta \geq 1-O(\mu)$ so that there are at least
$N^{9 - 2 \alpha - \beta-O(\mu)}$ pairs $(x,y)$ for which $|\Delta_G[x] \cap \Delta_G[y]| \gtrsim N^{\beta}$.
We note that when this happens, for each $a$ in the intersection $\Delta_G[x] \cap \Delta_G[y]$, we have
$x=a-b_1$ and $y=a-b_2$. Here if $a \in B_j$, we must have $b_1 \in B_j$ and $b_2 \in B_j$, since
$(a,b_1),(a,b_2) \in G$. We argue as in the proof of Lemma \ref{Iteration}, that there are
at least $N^{7-2\beta -O(\mu)}$ differences having $N^{\beta}$ representations in 
$$\bigcup_{j=1}^K  B_j \times B_j .$$
If not, $E_8 (\Delta ) >> N^{15 + \delta } $.  But 
\beqa
N^{\beta} N^{7-2\beta - O(\mu)} \leq \left|\bigcup_{j=1}^K  B_j \times B_j \right| \lesssim N^{6-\alpha + O(\mu)}, 
\eeqa
which implies $7-\beta \leq 6 - \alpha + O(\mu)$, which implies $1+\alpha - O(\mu)\leq \beta$.  
This gives us $O(\mu)$ comity for $G$.

Thus we have a set $H$ of pairs $x,y$ for which $|\Delta_G[x] \cap \Delta_G[y]| \gtrsim N^{1+ \alpha-O(\mu)}$
and so that $|H| \gtrsim N^{8-3\alpha-O(\mu)}$. Now we use the pairwise disjointness of the blocks
$B_j$ to write the identity
\begin{equation} \label{spreadthewealth}  \sum_{(x,y) \in H} |\Delta_G[x] \cap \Delta_G[y]| 
=\sum_{j=1}^K \sum_{(x,y) \in H} |\Delta_G[x] \cap \Delta_G[y] \cap B_j | \gtrsim N^{9-2\alpha - O(\mu)}
\end{equation}

Now we begin to use the comity of $G$.
We first eliminate from the second sum in equation \ref{spreadthewealth} all terms for which
the relative density of $\Delta_G[x]$ in $\Delta_G[y] \cap B_j$ or the relative density of 
$\Delta_G[y]$ in $\Delta_G[x] \cap B_j$ is smaller than $N^{-C \mu}$ for too large a constant $C$, again using the large families principle. By choosing
$C$ sufficiently large we do not reduce the sum by a factor of more than $2$. We dyadically pigeonhole
to obtain the largest possible sum from those terms where  
$N^{1+\gamma} \leq |\Delta_G[x] \cap \Delta_G[y] \cap B_j | \leq 2N^{1+\gamma}$. (We denote this set of $(x,y)$ as
$H_{\gamma,j}$), Thus we have  reduced the sum
by at most a factor of $\log N$. We keep only those $j$ for which 
\begin{equation} \label{yourok} \sum_{(x,y) \in H_{\gamma,j}} |\Delta_G[x] \cap \Delta_G[y] \cap B_j |
\gtrsim N^{9-3\alpha - O(\mu)}, \end{equation}
which we can do without sacrificing much by using again the large families principle.

We observe that for each $x$, there are at most $N^{3-\alpha+O(\mu)}$ choices of $y$ so that $(x,y) \in H$.
The reason is that  any $a \in \Delta$ belongs to at most $N^{3-\alpha + O(\mu)}$ sets $\Delta_G[y]$ 
because $G$ is contained in $\cup_{j} B_j \times B_j$. However if there were more than
$N^{3-\alpha+O(\mu)}$ choices of $y$ so that $(x,y) \in H$ then there would be elements $a \in \Delta_G[x]$
which are contained in $\Delta_G[y]$ for more than  $N^{3-\alpha+O(\mu)}$ choices of $y$
Thus, since we have at least $N^{8-2\alpha-\gamma-O(\mu)}$ triples $(j,x,y)$ for which
$$|\Delta_G[x] \cap \Delta_G[y] \cap B_j| \gtrsim N^{1+ \gamma},$$
while at the same time
$$|\Delta_G[x] \cap B_j| \lesssim N^{1+\gamma + O(\mu)},$$
(by the relative density of $\Delta _G [y]$ in $\Delta _G [x] \cap B_j $,)
it must be that for most values of $j$, we have at least $N^{8-3\alpha - \gamma}$ pairs $(x,y) \in H_{\gamma,j}$.
This means, fixing one such value of $j$ (since at most $N^{3-\alpha + O(\mu)}$ values of $y$ are paired with a given $x$) ,
there are at least $N^{5-2\alpha - \gamma -O(\mu)}$ differences $x$ with $N^{1+\gamma}$ representations
in $G \cap (B_j \times B_j)$. We call this set  $D_{j,\gamma}$.

Thus $G \cap (B_j \times B_j)$ is $\mu$-full and $1+\gamma$-energetic.
Another way of describing the $\mu$-fullness is that $N^{5-2\alpha - \gamma -O(\mu)}$ (up to $N^{O(\mu)}$ factors) is the
largest number of such $x$ possible, purely based on the size of $B_j \times B_j$. Thus it must be
that for a set of size $N^{5-2\alpha - \gamma -O(\mu)}$ many such $x$, there are $N^{3-\alpha-O(\mu}$ such $y$ with
$(x,y) \in H_{\gamma,j}$ (yet again, by the large families principle). Thus $(B_j,G \cap (B_j \times B_j),D_{j,\gamma})$ has $O(\mu)$-comity.  Clearly 
$(B_j, G \cap (B_j \times B_j), D_{j,\gamma})$ is a $3-\alpha$ structure with ambiguity $\mu$.
Thus we are in a position to apply Lemma \ref{kickass}. This proves the first part of the theorem.
Indeed, since all our estimates were optimal up to $N^{O(\mu)}$ factors, there is a set $J$ of choices
of $j$ for which we could apply Lemma \ref{kickass} with $|J| \gtrsim N^{\alpha - O(\mu)}$.

To prove the second part, we consider in detail the case $\gamma=0$. We will apply the
argument proving Lemma \ref{kickass}
to all $j \in J$.  This will give us $\mu$-additive sets $K_j$ and sets $X_j$ with appropriate
upper and lower bounds since we can assume $\Delta$ contains no $\mu$-additive sets with more than
$N^{1+f(\mu)}$ elements.

We will allow $f$ to vary from line to line and we will express even quantities that are
clearly $O(\mu)$ as $f(\mu)$.

We let $D$ be the set of all
differences $x$ for which $|\Delta[x] \cap B_j| \gtrsim N^{1-f(\mu)}$  for at least $N^{\alpha-f(\mu)}$
values of $j \in J$. 
For each value of $j \in J$ , there are at least
$N^{8-3\alpha -f(\mu)}$ pairs $(x,y) \in D^2$ with $|\Delta_G[x] \cap \Delta_G[y] \cap B_j | \gtrsim N^{1 - f(\mu)}$.
Note that we may also restrict $D$ to differences which cannot be represented in more than $N^{1+f(\mu)}$ ways
as differences of elements of $B_j$ for more than $N^{\alpha-f(\mu)}$ values in $j$ and so that in each $B_j$ our
count of good pairs $(x,y)$ consists only of pairs of differences which cannot be represent as differences in $B_j$
in more than $N^{1+f(\mu)}$ ways. Otherwise, we could choose $\gamma > f(\mu)$.

Now we recall the structure of the argument in Lemma \ref{kickass}.  We chose an $a \in B_j$ and a set $B_a$ of size
$N^{3-\alpha-f(\mu)}$ of the differences in which $a$ participates, and a set $K_a$ which is actually of the form
$\Delta_G[x] \cap B_j$ and has size $N^{1-f(\mu)}$. We find $N^{5-\alpha -f(\mu)}$ additive quadruples made up
of two elements of $B_a$ and two elements of $K_a$. We may strip down $K_a$ further to those
elements which participate in at least $N^{4-\alpha-f(\mu)}$ of these quadruples and not harm our estimate on the number of quadruples between $K_a$ and $B_a$.
Now, we note that since $B_a$ is a large subset of a translate of $B_j$, it must be that there are $N^{2-f(\mu)}$
pairs $(q_1,q_2) \in K_a^2$ with the property that $q_1-q_2$ is represented $N^{3-\alpha-f(\mu)}$ times
as a difference of elements of $B_j$. We let $K_{1,j}$ be the set of differences of $B_j$ that can be represented
in $N^{3-\alpha-f(\mu)}$ ways as differences in $B_j$.  Because $B_j$ contains
no $\mu$-additively closed set of size more than $N^{1+f(\mu)}$, we have that $|K_{1,j}| \leq N^{1+f(\mu)}$.
Otherwise we could apply the asymmetric Balog-Szemeredi-Gowers theorem, to obtain a $\mu$-additively
closed set, contained in $B_j$, as in the $\gamma >> O(f(\mu)) $ case.

We can replace $N^{5-\alpha-f(\mu)}$ quadruples $q_1-q_2=x_1-x_2$ with $q_1,q_2 \in K_{1,j}$ and $x_1,x_2 \in B_a$ by an 
equation of the
form $q=x_1-x_2$ with multiplicity  at most $N^{1+f(\mu)}$. Thus we obtain at least $N^{4-\alpha-f(\mu)}$ such equations.
We rewrite the equation as $x_1-q=x_2$ and use Cauchy Schwarz to obtain $N^{5-\alpha - f(\mu)}$ quadruples
$x_1-q=x_1^{\prime} - q^{\prime}$. We see then that without losing more than $N^{f(\mu)}$ factors in our estimates,
we can replace $K_a$ by $K_{1,j}$. This is good since we have made it independent of the choice of $a$. Now we need only
show that we can find many quadruples not only between $K_{1,j}$ and $B_a$ but between $K_{1,j}$ and $D$. This will give us 
the desired result.

To do this, we observe that we may delete from $B_a$ a set with relative density $N^{-f(\mu)}$ without harming our
estimates on the number of quadruplets between $B_a$ and $K_{1,j}$. Our goal will be to cover a subset $D^{\prime}$
by a disjoint union of subsets of the form $B^{\prime}_a$ where $B^{\prime}_a$ is a subset of $B_a$ with
relative density $1-N^{-f(\mu)}$. To do this we observe that for any fixed $a_1 \in B_j$
$$\sum_{a_2} |B_{a_1} \cap B_{a_2} | \lesssim N^{4-\alpha+f(\mu)}.$$
We can do this because the sum counts triples $(x,a_1,a_2)$ with $x$ a difference and $a,a_2$ are parts of representations
of it. We have assumed that we are only dealing with differences with fewer than $N^{1+O(\mu)}$ representations.
Here we are using that we are in the $\gamma=0$ case,

Now we produce $D^{\prime}$ as follows. We choose $a_1$ and keep the set $B_{a_1}$.
We add all elements of $B_{a_1}$ to $D^{\prime}$ We choose $B_{a_2}$ to have
the minimal possible sized intersection with $B_{a_1}$ and let $B_{a_2}^{\prime}$ be those elements
in $B_{a_2}$ that are not already in $D^{\prime}$. We choose $B_{a_3}$ to have minimal possible
intersection with $B_{a_1} \cup B_{a_2}$. We continue in this way until we reach a $k$ so that
$B_{a_k}^{\prime}$ no longer has relative density $1-N^{-f(\mu)}$ in $B_{a_k}$. Because the
average intersection $|B_a \cap B_{a^{\prime}}|$ is bounded by $N^{1 + f(\mu)}$, we get
that $k$ is at least $N^{2-\alpha -f(\mu)}$. Thus our set $D^{\prime}$ has relative density at least 
$N^{-f(\mu)}$ in $D$. Thus we have that $K_{1,j}$ has $N^{7-2\alpha-f(\mu)}$ quadruples with $D^{\prime}$ and {\it a fortiori}
with $D$. 

Now we slightly refine $K_{1,j}$ to $K_{2,j}$ consisting only to differences of elements
of $K_{1,j}$ which participate in
at least $N^{5-\alpha-f(\mu)}$ of the quadruples with $D^{\prime}$. 
(We perform this refinement in order to prove the very last claim of the theorem.)
Since $D^{\prime}$ is a disjoint
union of sets with relative density $N^{-f(\mu)}$ inside translates of $B_j$, it must
be that $K_{2,j}$ still has $N^{5-\alpha - f(\mu)}$ quadruples with $B_j$. Thus we can apply the
asymmetric Balog Szemeredi Gowers theorem to find a subset $K_j$ satisfying the conclusions of the
theorem.

\end{proof}

\section{Structure of spectrum of large capsets with no strong increments}

In this section, we transfer the results obtained in Theorem \ref{structuretheory} over to the
setting of the spectrum of large capsets with no strong increments. This turns out to be rather
simple. The main ideas which we have not yet taken advantage of are Freiman's theorem and the use of
the estimate in Proposition \ref{first} which bounds the number of elements of the spectrum in
a subspace of dimension $d$ by $d N^{1+2\epsilon}$.

We state the main result of the section.

\begin{theorem} \label{specstruct} Let $\Delta$ be the spectrum of large capset without strong increments.
There is a function $f:[0,1] \longrightarrow [0,\infty]$ with $\lim_{t \longrightarrow 0} f(t)=0$ so that 
the following holds.
There is a subspace $H$ of $F_3^N$ of dimension $N^{ f(\epsilon) }$ and a set $\Lambda \subset F_3^N$ of size 
$N^{2-f(\epsilon)}$ so that
for each element $\lambda \in \Lambda$, there is a subset $H_{\lambda} \subset H$ with the properties
that 
$$|H_{\lambda}| \gtrsim N^{1-f(\epsilon)},$$
and the sets $\lambda+H_{\lambda}$ are pairwise disjoint subsets of $\Delta$.
For any subspace $W \subset F_3^N$ of dimension $d$, we have that $W$ contains at most $d N^{f(\epsilon)}$ elements
of $\Lambda$. \end{theorem}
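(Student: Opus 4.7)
The plan is to apply Theorem \ref{structuretheory} to $\Delta$, consolidate the resulting additively closed pieces into a single small-dimensional subspace $H$, and assemble $\Lambda$ from $H$-coset representatives.

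By Proposition \ref{Spectrumsays} there is a robust additively non-smoothing $\Delta' \subset \Delta$ of strength $O(\epsilon)$, to which I apply Theorem \ref{structuretheory}. The first alternative of that theorem produces an $f(\epsilon)$-additively closed set $K \subset \Delta$ of size $\gtrsim N^{1+\gamma-f(\epsilon)}$; Freiman's theorem (Theorem \ref{Freiman}) embeds $K$ in a subspace $V$ of dimension $N^{f(\epsilon)}$, whereupon Proposition \ref{first} forces $|K| \leq |\Delta \cap V| \lesssim N^{1+2\epsilon+f(\epsilon)}$, so $\gamma = f(\epsilon)$. The second alternative is therefore in force, yielding disjoint blocks $B_1,\dots,B_M \subset \Delta$ with $M \sim N^\alpha$, sets $X_j$ of size $\sim N^{2-\alpha}$, $f(\epsilon)$-additively closed $K_j$'s of size $\sim N$, the overlap bound $|B_j \cap (X_j+K_j)| \gtrsim N^{3-\alpha-f(\epsilon)}$, and crucially the common set $D^*$ of differences having $\gtrsim N^{5-2\alpha-f(\epsilon)}$ representations as differences of elements of $D$, which contains every $K_j$.

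The heart of the matter, and the main obstacle, is producing a single subspace $H$ of dimension $N^{f(\epsilon)}$ that contains every $K_j$. Freiman's theorem individually embeds each $K_j$ in its own subspace $V_j$ of dimension $N^{f(\epsilon)}$, and the task is to align all of these $V_j$'s into one. My approach would be to exploit the quadruples $Q_j$ furnished by Theorem \ref{structuretheory}, which tie each $K_j$ to the common ground set $D$, together with the asymmetric Balog--Szemer\'edi--Gowers theorem (Lemma \ref{absg}): because all the $K_j$'s sit inside the single rigid set $D^* \subset D-D$, and each $K_j$ is itself additively closed, an iterated BSG argument should force their Freiman subspaces to coalesce into one common $H$.

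With $H$ fixed and $K_j \subset H$ for every $j$, each overlap $B_j \cap (X_j+K_j)$ lies inside $X_j+H$, so the total mass $\sum_j |B_j \cap (X_j+K_j)| \gtrsim N^{3-f(\epsilon)}$ spreads across at most $\sum_j |X_j| \lesssim N^2$ distinct $H$-cosets. The large-families principle (Lemma \ref{largefamilies}) then produces $\Lambda$, a transversal for the heavy cosets, with $|\Lambda| \gtrsim N^{2-f(\epsilon)}$ and $|H_\lambda| := |\Delta \cap (\lambda+H)| \gtrsim N^{1-f(\epsilon)}$ for each $\lambda \in \Lambda$; disjointness of the $\lambda+H_\lambda$ is automatic because $\Lambda$ is a transversal. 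Finally, if some subspace $W$ of dimension $d \leq N/2$ contained more than $d N^{f(\epsilon)}$ elements of $\Lambda$, then $W+H$, of dimension at most $d + N^{f(\epsilon)}$, would contain at least $|\Lambda \cap W| \cdot N^{1-f(\epsilon)} \gtrsim dN$ points of $\Delta$, contradicting the bound $|\Delta \cap (W+H)| \lesssim (d+N^{f(\epsilon)}) N^{1+2\epsilon}$ supplied by Proposition \ref{first}.
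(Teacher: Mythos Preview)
Your outline matches the paper's proof at both ends: the reduction to the $\gamma=0$ case via Freiman plus Proposition \ref{first}, and the construction of $\Lambda$ as a greedy transversal of heavy $H$-cosets together with the final subspace bound from Proposition \ref{first}, are exactly what the paper does.

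The gap is at the step you yourself flag as the ``heart of the matter'': aligning all the $K_j$ into a single subspace $H$. An ``iterated BSG argument'' is neither what the paper does nor, as stated, enough. BSG applied to each $(K_j,D)$ pair only tells you that each $K_j$ individually sits near some additively closed set; it does not, without further input, force those sets to coincide. The missing idea is a \emph{global} size bound on your set $D^*$: since each element of $D^*$ has $\gtrsim N^{5-2\alpha-f(\epsilon)}$ representations as a difference in $D$, and each element of $D$ has $\gtrsim N^{1+\alpha-f(\epsilon)}$ representations as a difference in $\Delta$, the non-smoothing bound $E_8(\Delta) \lesssim N^{15+O(\epsilon)}$ (via $E_4(D) \lesssim N^{11-4\alpha+f(\epsilon)}$) forces $|D^*| \lesssim N^{1+f(\epsilon)}$.

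Once you have $|D^*| \lesssim N^{1+f(\epsilon)}$ the alignment is pure pigeonhole, no BSG needed. Double-counting $\sum_j |K_j| \gtrsim N^{1+\alpha-f(\epsilon)}$ against $|D^*|$ shows that the set $K'$ of elements of $D^*$ lying in at least $N^{\alpha-f(\epsilon)}$ of the $K_j$ is large; pick one $K_{j_0}$ with $|K_{j_0} \cap K'| \gtrsim N^{1-f(\epsilon)}$ and set $K'' = K_{j_0} \cap K'$. Then $K''$ is $f(\epsilon)$-additively closed (being a dense subset of the additively closed $K_{j_0}$), and a second double count shows $|K_j \cap K''| \gtrsim N^{1-f(\epsilon)}$ for $\gtrsim N^{\alpha-f(\epsilon)}$ values of $j$. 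Freiman applied to $K''$ gives the common $H$, and you can replace each surviving $K_j$ by $K_j \cap K'' \subset H$.
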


\begin{proof} As before we allow our function $f$ to vary from line to line until we achieve the desired result.

In light of Theorem \ref{Freiman} and Proposition \ref{first}, any $f(\epsilon)$-additively closed
set in which the spectrum has $N^{-f(\epsilon)}$ relative density, must be bounded in size by $N^{1+f(\epsilon)}$.
Therefore, we are in the $\gamma=0$ case of Theorem \ref{structuretheory}. We know that there is a set $\Delta^{\prime}$
of density $N^{-f(\epsilon)}$ in the spectrum $\Delta$ which is contained in
$$\bigcup_{j=1}^{M }  X_j + K_j,$$
with $M \gtrsim N^{\alpha - O(f(\epsilon))}$,
with each $K_j$ an $f(\epsilon)$-additively closed set (of size at least  $N^{1-f(\epsilon)}$ and at most
$N^{1+f(\epsilon)}) $ and with each set $X_j$ of size
  $ N^{2-\alpha \pm f(\epsilon)}$ . Moreover, each set $K_j$ lies in the 
set $K$ of differences having
at least $N^{5-2\alpha-f(\epsilon)}$ representations as differences of elements of $D$, the differences among elements
of the spectrum which have at least $N^{1+\alpha -f(\epsilon)}$ representations. In light of the non-additive
smoothing property of $\Delta$, we have that $|K| \lesssim N^{1+f(\epsilon)}$ since there can be at most 
$N^{11-4\alpha +
f(\epsilon)}$ quadruplets among elements of $D$. We may eliminate all elements $q$ of each $K_j$ for which
$\Delta \cap q+ X_j$ does not have size at least $N^{2-\alpha -f(\epsilon)}$. Now we let $K^{\prime}$ be the set of
elements of $K$ which appear in at least $N^{\alpha-f(\epsilon)}$ many $K_j$. We can find some $K_j$ which has intersection
of size $N^{1-f(\epsilon)}$ with $K^{\prime}$. Then $K^{\prime} \cap K_j = K^{\prime \prime}$ is a 
$f(\epsilon)$-additively closed
set with cardinality at least $N^{1-f(\epsilon)}$. Moreover each element of $K^{\prime \prime}$ is contained
in $N^{\alpha-f(\epsilon)}$ many $K_j$. Thus by pigeonholing there are at least $N^{\alpha-f(\epsilon)}$ many $K_j$ so that
$|K_j \cap K^{\prime \prime}| \gtrsim N^{1-f(\epsilon)}$. We only keep these $j$ and replace $K_j$ by 
$K_j \cap K^{\prime \prime}$. But by Theorem \ref{Freiman}, we have that $K^{\prime \prime}$ is contained
in a subspace of dimension $N^{f(\epsilon)}$ which we call $H$.

This basically proves the first part of the theorem. We have that a subset of the spectrum of density $N^{-f(\epsilon)}$
is contained in $K^{\prime \prime} + X$, where $X$ is the union of the $X_j$'s. We will pick $\Lambda$ and the sets
$H_{\lambda}$ as follows: Find $x_1$ in $X$ so that at least $N^{1-f(\epsilon)}$ elements of $\Delta$
are contained in $x_1+K^{\prime \prime}$. Let $\Delta_1$ be the elements of $\Delta$ contained in
$X+K^{\prime \prime}$ but not in $x_1+K^{\prime \prime}$. Let $\Delta^1$ be those elements  of $\Delta$ contained
in $x_1+K^{\prime \prime}$ and let $H_{x_1} = \Delta^{1} - x_1$. Note that $H_{x_1}$ is contained in $K^{\prime \prime}$
and therefore in $H$. Now we proceed iteratively. Find $x_j \in X$ so that there are at least $N^{1-f(\epsilon)}$
elements of $\Delta_{j-1}$ in $x_j+K^{\prime \prime}$. When this is no longer possible, we terminate the process.
Then we let $\Delta_j$ be the elements of $\Delta_{j-1}$ not in $x_j + K^{\prime \prime}$ and we let $\Delta^j$ be the
ones that are. We let $H_{x_j}= \Delta^j-x_j$. We let $\Lambda=\{ x_j\}$ after the iteration has terminated.  
Note that if 
\beqa
|(x_j + K'' ) \cap \Delta _{j-1} | << N^{1-f(\epsilon)} 
\eeqa
for all remaining $x_j$ in $X$, then 
\beqa
|\bigcup _{i=1} ^{j} (x_i + K'') | \gtrsim N^{3 - f(\epsilon)},
\eeqa
so $|\Lambda | \gtrsim N^{2-f(\epsilon)}$.

To prove the second part of the theorem, let $S$ be any subset of $\Lambda$ with some cardinality $M$. 
But $S+H$ contains at least $M N^{1-f(\epsilon)}$
elements of $\Delta$. This contradicts Proposition \ref{first} unless the span of $S$ has dimension
at least $M N^{f(\epsilon)}$.

\end{proof}

\section{Contradiction} \label{lastsection}

The goal of this section is to obtain a contradiction from the existence of large capsets without strong increments
by using the result of Theorem \ref{specstruct}. We begin by recording some easy consequences of Plancherel's identity
for the interaction between the Fourier transform of the characteristic function of a set and the Fourier transforms
of its fibers over a subspace.

For any set $A \subset F_3^N$, we define its Fourier transform
$$\hat A(x)={1 \over 3^N} \sum_{a \in A}  e(a \cdot x).$$
We state Plancherel's identity:

\begin{proposition} \label{Plancherel}
$$\sum_{x \in F_3^N}  |\hat A(x)|^2 = 3^{-N} |A|.$$
\end{proposition}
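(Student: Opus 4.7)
The plan is the standard Plancherel proof: expand the square, swap sums, and use character orthogonality on $F_3^N$. Concretely, I would begin by writing
\[
|\hat A(x)|^2 = \hat A(x)\,\overline{\hat A(x)} = \frac{1}{3^{2N}} \sum_{a \in A} \sum_{b \in A} e\bigl((a-b)\cdot x\bigr),
\]
using that $\overline{e(t)} = e(-t)$ for $t \in F_3$, which follows immediately from the explicit formula for $e$ given at the start of Section~\ref{reviewsection}.

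Next I would sum over $x \in F_3^N$ and interchange the order of summation, reducing the problem to evaluating the inner sum $\sum_{x \in F_3^N} e(v \cdot x)$ with $v = a - b$. The key (and only nontrivial) ingredient is character orthogonality: this inner sum equals $3^N$ when $v = 0$ and $0$ otherwise. One verifies this coordinate by coordinate, noting that $1 + e(1) + e(2) = 0$, so for any nonzero $v$ at least one coordinate contributes a factor that vanishes. Applying this with $v = a - b$ forces $a = b$, collapsing the double sum over $A \times A$ to a single diagonal sum of $|A|$ terms, each contributing $3^N$.

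Putting the pieces together yields
\[
\sum_{x \in F_3^N} |\hat A(x)|^2 = \frac{1}{3^{2N}} \cdot 3^N \cdot |A| = 3^{-N} |A|,
\]
as claimed. There is no substantive obstacle; the identity is purely formal. It is recorded here because it will be the workhorse for the fiber analysis in the remainder of Section~\ref{lastsection}, where one relates $\hat A$ on translates of the annihilator of $H$ to the sizes of the fibers of $A$ over the quotient by $H$.
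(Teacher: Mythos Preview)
Your proof is correct and matches the paper's approach exactly. The paper does not supply a proof at Proposition~\ref{Plancherel} itself, but the identical computation (expand, swap sums, use orthogonality to collapse to the diagonal) appears earlier in Section~\ref{reviewsection} in the derivation of inequality~\eqref{L2}, so your argument is precisely what the authors have in mind.
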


We let $H$ be a subspace of $F_3^N$ and we let $H^{\perp}$ be its annihilator. We let $V$ be a subspace of
the same dimension as $H$ which is transverse to $H^{\perp}$ i.e., $ V+ H^{\perp} = F_3^N $ .  We define the fiber $A_{H,v}$ for $v \in V$ by
$$A_{H,v} = A \cap (H^{\perp}+ v).$$

If we have $h \in H$, then
$$\hat A(h)= 3^{-N} \sum_{v \in V}  e(h \cdot v)  |A_{H,v}|.$$

Thus we arrive at another form of Plancherel:

\begin{proposition} \label{Plancherel2}
$$\sum_{h \in H} |\hat A(h)|^2  = |H| 3^{-2N} \sum_{v \in V} |A_{H,v}|^2.$$
Moreover
$$\sum_{h \neq 0 \in H}  |\hat A(h)|^2 = |H| 3^{-2N} \sum_{v \in V}  (|A_{H,v}| - {|A| \over |H|})^2 $$
\end{proposition}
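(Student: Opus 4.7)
The plan is to recognize both identities as instances of Plancherel on the finite group $V$, viewed as the Pontryagin dual of $H$ via the pairing $(v,h) \mapsto e(h\cdot v)$. First I would verify this duality: since $V + H^\perp = F_3^N$ and $\dim V = \dim H$ forces $\dim V + \dim H^\perp = N$, the sum is direct, so $V \cap H^\perp = 0$. Hence the map $h \mapsto (v \mapsto e(h\cdot v))$ from $H$ to $\hat V$ is injective, and by dimension count it is an isomorphism of groups. In particular, distinct $h\in H$ give distinct characters of $V$.

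Next I would read the formula $\hat A(h) = 3^{-N}\sum_{v\in V} e(h\cdot v)\,|A_{H,v}|$ as saying that $\hat A\big|_H$ is, up to the scalar $|V|/3^N$, the Fourier transform (on $V$) of the function $f(v) := |A_{H,v}|$. Standard Plancherel on $V$ reads $\sum_{h\in H} |\tilde f(h)|^2 = |V|^{-1}\sum_{v\in V} |f(v)|^2$ where $\tilde f(h) = |V|^{-1}\sum_v e(h\cdot v)f(v)$. Substituting $\hat A(h) = (|V|/3^N)\tilde f(h)$ and using $|V|=|H|$ yields
\begin{equation*}
\sum_{h\in H} |\hat A(h)|^2 \;=\; \frac{|V|^2}{3^{2N}}\cdot\frac{1}{|V|}\sum_{v\in V} |A_{H,v}|^2 \;=\; |H|\,3^{-2N}\sum_{v\in V}|A_{H,v}|^2,
\end{equation*}
which is the first identity.

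For the second identity I would isolate the $h=0$ term: directly from the defining formula, $\hat A(0) = 3^{-N}\sum_{v\in V}|A_{H,v}| = 3^{-N}|A|$, so $|\hat A(0)|^2 = 3^{-2N}|A|^2$. Subtracting this from the first identity gives
\begin{equation*}
\sum_{h\neq 0\in H}|\hat A(h)|^2 \;=\; |H|\,3^{-2N}\sum_{v\in V}|A_{H,v}|^2 \;-\; 3^{-2N}|A|^2.
\end{equation*}
It remains to check this matches the claimed right-hand side. Expanding $(|A_{H,v}|-|A|/|H|)^2$ and summing, the cross term uses $\sum_v |A_{H,v}|=|A|$ and the constant term uses $|V|=|H|$, yielding $\sum_v|A_{H,v}|^2 - |A|^2/|H|$; multiplying by $|H|\cdot 3^{-2N}$ reproduces exactly the expression above. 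There is no substantive obstacle here; the only subtle point is the duality between $H$ and $V$, which is why the hypothesis $V+H^\perp = F_3^N$ appears.
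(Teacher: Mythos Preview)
Your proof is correct and follows exactly the approach the paper intends: the paper states the formula $\hat A(h)=3^{-N}\sum_{v\in V}e(h\cdot v)\,|A_{H,v}|$ and then simply declares the proposition as ``another form of Plancherel'' without further detail, and you have supplied precisely that Plancherel computation on $V\cong\hat H$, together with the subtraction of the $h=0$ term for the second identity. Your verification of the $H$--$V$ duality via $V\cap H^\perp=0$ is the one nontrivial point, and it is handled correctly.
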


Next, we consider the situation where we have a subspace $H \subset F_3^N$  and a larger subspace
$K$ with $H \subset K \subset F_3^N$. We let $V$ be a subspace transverse to $H^{\perp}$ as before
and we would like to consider the Fourier transforms of the fibers of $A$, namely the sets $A_{H,v}$.
We can think of each fiber $A_{H,v}$ as being identified with a subset of $H^{\perp}$ (by translation by $v$)
and of course $H^{\perp}$ can be identified with $F_3^N / H$. That is,
we define functions $A_{H,v} : F_3^N / H \longrightarrow {\bf C}$ by

$$\hat A_{H,v}(w) =  {|H| \over 3^N} \sum_{a \in A_{H,v}} e( (a-v) \cdot w).$$

The function $\hat A_{H,v}(w)$ is well defined on $F_3^N \backslash H$ since $a-v$ is in $H^{\perp}$. Next, we
write down a version of Proposition \ref{Plancherel2} which shows how the 
$L^2$ norms of the Fourier transforms of the fibers on $K \backslash H$ with the $L^2$ norm of the 
Fourier transform on $K$. We let $W$ be a subspace transverse to $K^{\perp}$ with $V \subset W$.

\begin{proposition} \label{martingale}
With $H$, $K$, $V$, and $W$ as above,
$$\sum_{k \neq 0 \in K}  |\hat A(k)|^2 = \sum_{h \neq 0 \in H} |\hat A(h)|^2 + {1 \over |H|} 
\sum_{v \in V} \sum_{k \neq 0 \in K / H} |\hat A_{H,v}(k)|^2.$$
\end{proposition}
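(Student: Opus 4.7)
The plan is to verify the identity directly by Fourier-expanding $\hat A(k)$ in terms of the fiber transforms $\hat A_{H,v}(k)$ and then summing $|\hat A(k)|^2$ over cosets of $H$ inside $K$. The key point is that $\hat A_{H,v}$ is a function on $F_3^N/H$ (the formula only uses $(a-v)\cdot w$ with $a-v \in H^\perp$), so it is constant on cosets of $H$, which lets orthogonality of characters on $H$ clean everything up.

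First, I would decompose $A = \bigsqcup_{v \in V} A_{H,v}$ and write, for any $k \in F_3^N$,
\begin{eqnarray*}
\hat A(k) &=& {1\over 3^N} \sum_{v \in V} \sum_{a \in A_{H,v}} e(a\cdot k) \\
&=& {1\over 3^N} \sum_{v \in V} e(v\cdot k) \sum_{a \in A_{H,v}} e((a-v)\cdot k) \\
&=& {1\over |H|} \sum_{v \in V} e(v\cdot k)\, \hat A_{H,v}(k),
\end{eqnarray*}
using the normalization $\hat A_{H,v}(k) = (|H|/3^N) \sum_{a \in A_{H,v}} e((a-v)\cdot k)$ from the definition.

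Next, fix a coset $k_0 + H$ of $H$ inside $K$. Since $\hat A_{H,v}(k)$ depends only on $k \bmod H$, it equals $\hat A_{H,v}(k_0)$ everywhere on this coset. Squaring the formula above and summing over $k \in k_0 + H$, I get
\begin{eqnarray*}
\sum_{k \in k_0 + H} |\hat A(k)|^2 &=& {1\over |H|^2} \sum_{v,v' \in V} \hat A_{H,v}(k_0)\overline{\hat A_{H,v'}(k_0)} \sum_{h \in H} e((v-v')\cdot(k_0+h)).
\end{eqnarray*}
The inner sum over $h \in H$ equals $|H|\cdot \mathbf 1[v-v' \in H^\perp]$, and since $V$ is transverse to $H^\perp$ this forces $v=v'$. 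Thus the whole expression collapses to ${1\over |H|} \sum_{v \in V} |\hat A_{H,v}(k_0)|^2$, which is the promised ``Pythagoras across the coset'' identity.

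Finally, I would sum this over the nonzero cosets $k_0 + H \in K/H$ (which is the same as summing $k$ over $K \setminus H$), getting
\begin{eqnarray*}
\sum_{k \in K\setminus H} |\hat A(k)|^2 = {1\over |H|} \sum_{v \in V} \sum_{k \neq 0 \in K/H} |\hat A_{H,v}(k)|^2,
\end{eqnarray*}
and add $\sum_{h \neq 0 \in H} |\hat A(h)|^2$ to both sides to obtain the claimed identity. The only real thing to be careful about is bookkeeping of normalizations and of the well-definedness of $\hat A_{H,v}$ on $F_3^N/H$; the ``main obstacle'' here is purely notational rather than mathematical, since both halves of the identity follow from one application of character orthogonality on $H$.
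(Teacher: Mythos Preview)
Your proof is correct. The computation is clean and the bookkeeping (normalizations, well-definedness of $\hat A_{H,v}$ on $F_3^N/H$, and $V\cap H^\perp=\{0\}$ from transversality) is handled correctly.

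It is, however, a genuinely different route from the paper's. The paper works on the physical side: it considers the function $g(w)=|A_{K,w}|-|A|/|K|$ on $W$, decomposes it orthogonally as $g=g_0+\sum_v g_v$ where $g_0$ is constant on translates of $W'$ (the complement of $K^\perp$ inside $H^\perp$) and each $g_v$ has mean zero on the coset $v+W'$, and then reads off the identity as the Pythagorean theorem together with Proposition~\ref{Plancherel2}. Your argument instead stays entirely on the Fourier side, expressing $\hat A(k)$ as a finite Fourier transform over $V$ of the fiber transforms and using character orthogonality on $H$ coset-by-coset. The paper's approach makes the martingale/conditional-expectation structure explicit (hence the proposition's name); yours is more computational and perhaps more transparent for a reader used to straight Fourier manipulations. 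Either way the content is the same single application of orthogonality, just placed on opposite sides of Plancherel.
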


\begin{proof} 
Since clearly we have $K^{\perp} \subset H^{\perp}$, there is a unique subspace $W^{\prime} \subset W$
with $W^{\prime}+K^{\perp}=H^{\perp}.$ We have $V+W^{\prime} = W$.

We consider the following function on $W$,
$$g(w)=|A_{K,w}|-{|A| \over |K|}.$$
Clearly, in light of the second part of Proposition \ref{Plancherel2}, the left hand side of the identity we are
trying to prove is the normalized square of the $L^2$ norm of the function $g(w)$.

We now break up $g$ as the sum of a function $g_0$ which is constant on translates of $W^{\prime}$
and functions $g_v$ with $v$ running over $V$ having mean zero and supported on the translate $v+W^{\prime}$ of
$W^{\prime}$.  Clearly the functions $g_0$ and $\{g_v\}$ are pairwise orthogonal. The first term on the right
hand side of the identity is the normalized square $L^2$ norm of the function $g_0$. The second term on the
right hand side represents the sum over $v$ of the normalized square $L^2$ norm of the functions $g_v$. The identity
is then an application of the Pythagorean theorem.
\end{proof}

(We remark that Proposition \ref{martingale} can simply be thought of as Plancherel for a ``local Fourier
transform" of $A$. Here, we localize to the translates of $H^{\perp}$.

Now we are prepared to apply Proposition \ref{martingale} to the setting in which $A$ is large capset without
strong increments and $H$ is the subspace given to us by Theorem \ref{specstruct}.

We let $A$ be a large capset with no strong increments. As usual, $f$ will be a function taking $[0,1]$ to
$[0,\infty]$ with $\lim_{t \longrightarrow 0} f(t)=0$. We will vary $f$ from line to line.

Then there is a subspace $H$ with dimension $N^{f(\epsilon)}$
and a set $\Lambda$ of size $N^{2-f(\epsilon)}$ so that for each $\lambda \in \Lambda$ there is a subset
$H_{\lambda}$ of $H$, so that $|H_{\lambda}| > N^{1-f(\epsilon)}$ so that for each $h \in H_{\lambda}$, we have that
$$|\hat A(h+\lambda)| \gtrsim N^{-1-f(\epsilon)} 3^{-N} |A|.$$
We also have that the sets $\lambda+H_{\lambda}$ are pairwise disjoint.

Note therefore that
$$\sum_{\lambda \in \Lambda} \sum_{h \in H_{\lambda}} |\hat A(h+\lambda)|^2 \gtrsim N^{-f(\epsilon)} N 3^{-2N} |A|^2 
\gtrsim N^{-f(\epsilon)} 3^{-N} |A|.$$

Thus the structured elements of the spectrum of $A$ account for a large proportion of the squared $L^2$ norm
of the Fourier transform of $A$.

Now we would like to consider the fibers $A_{H,v}$, where $H$ is the subspace we've been discussing.
Because the capset $A$ has no strong increments, we know that for each value $v$, we have
$$|A_{H,v}| \leq ({|A| \over |H|})  (1+ N^{f(\epsilon)-1}).$$
We will now momentarily fix the function $f$.

However, we don't have a good lower bound on $|A_{H,v}|$ in general. All we know is that sum of all the positive
increments is equal to the sum of all the negative increments.  (See the proof of Proposition \ref{first}.)  We let $V_{bad}$ be the set of all $v \in V$
for which
$$|A_{H,v}| \leq ({|A| \over |H|}) (1- N^{2 \sqrt{f(\epsilon)}-1}).$$
(That is $V_{bad}$ is the set of those $v \in V$ for the which the fiber has a bad negative increment.)
We know that
$$|V_{bad}| \lesssim N^{-\sqrt{f(\epsilon)}} |V|.$$
We define
$$A_{bad} = \bigcup_{v \in V_{bad}} A_{H,v},$$
and we let
$$A^{\prime} = A \backslash A_{bad}.$$
We know that 
$$|A_{bad}| \lesssim N^{-\sqrt{f(\epsilon)}} |A|.$$
Thus 
$$\sum_{x} |\hat A_{bad}(x)|^2 \lesssim N^{-{\sqrt{f(\epsilon)} \over 2}} 
\sum_{\lambda \in \Lambda} \sum_{h \in H_{\lambda}} |\hat A(h+\lambda)|^2.$$
Thus removing $A_{bad}$ does not perturb the large spectrum of $A$ too much.

(In making this precise, we now resume changing the function $f$ from line to line, observing that we may take the 
next $f$ to be larger than the previous $\sqrt{f}$.) We may find a set $\Lambda^{\prime}$ which is a subset of 
$\Lambda$ with
$| \Lambda^{\prime}| \gtrsim N^{2-f(\epsilon)}$ and so that for each $\lambda \in \Lambda^{\prime}$ there
is a subset $H^{\prime}_{\lambda}$ of $H$ so that for each $\lambda \in \Lambda^{\prime}$ and each 
$h \in H^{\prime}_{\lambda}$ we have
$$|\hat A^{\prime} (h+\lambda)| \gtrsim N^{-1-f(\epsilon)} 3^{-N} |A|.$$

Thus from the point of view of the structure of the spectrum, we have that $A^{\prime}$ is essentially
as good as $A$. However, the set $A^{\prime}$ has a big advantage over $A$ in that we have good bounds
on the Fourier transform of its fibers. This is because the fibers are either empty or close to size
${|A| \over H}$. Empty fibers achieve no increments. On the other hand, fibers which are close
to average cannot have an increment too large, or else the set $A$ will have a strong increment on a translate
of a codimension 1 subspace of $H^{\perp}$. Precisely, the estimate we get is
\beqan \label{newnumber}
|\hat A^{\prime}_{H,v} (x)| \lesssim  3^{-N} |A| N^{-1+ f(\epsilon)}= \rho N^{-1+ f(\epsilon)}.
\eeqan
(This is because the negative increment on density to pass from $A$ to $A^{\prime}_{H,v}$ does no
more than to reduce the density by a factor of $1-N^{f(\epsilon)-1}$ and the codimension of $H$ is only $f(\epsilon)$.)
Should the Fourier transform of $A^{\prime}_{H,v}$ exceed the above bound, then $A$ will have
a strong increment into a codimension one subspace of $H^{\perp} + v$.

Moreover, the set $A^{\prime}_{H,V}$ is a large capset without strong increments
(on subspaces of codimension no more than ${N \over 2} - N^{f(\epsilon)}$)
but with $\epsilon$ replaced
by $f(\epsilon)$. Thus we see using Proposition \ref{first} that for any subspace $L$ of $F_3^N \backslash H$
with dimension $d$, we have that
$$\sum_{x \in L}  |\hat A^{\prime}_{H,v}(x)|^2  \lesssim ({|A| \over 3^{N}})^2 d N^{-1+f(\epsilon)}.$$
This estimate is a version of the bound on the number of elements of the spectrum of $A^{\prime}_{H,V}$ in $W$.

From this information together with the fact that no subspace $K$ of dimension $d$ contains more than
$dN^{f(\epsilon)}$ elements of $\Lambda$ and hence of $\Lambda^{\prime}$, we are ready to achieve a contradiction.

We introduce some measure spaces on which we will apply Lemma \ref{Carbery}. For each $v \in V$, we define
$\chi_v$, a measure on $F_3^N \backslash H - \{0\}$ by
$$\chi_v(X)=\sum_{x \in X} |\hat A^{\prime}_{H,v} (x)|^2.$$

Clearly the total measure of $\chi_v$ is bounded by $3^{-N} |A| N^{f(\epsilon)}$, by Plancherel.
We now give a construction for sets which have large density for many of the measures $\chi_v$.

Let $\Xi \subset \Lambda^{\prime}$ with  $|\Xi| = d \lesssim N^{1-f(\epsilon)}$.
Then in light of the fact that $\Xi + H$ contains at least $d N^{1-f(\epsilon)}$ points $x$
where $A^{\prime}(x)$ is large and in light of Proposition \ref{martingale}, there
must be $\gtrsim N^{-f(\epsilon)} |H|$ values of $v$ for which $span(\Xi)$ has density at least $d N^{-2-f(\epsilon)}$
for $\chi_v$, by the large families principle. (Here, by slight abuse of notation, the span is taken in $F_3^N \backslash H$.)

We will take $d \sim N^{{2 \over 3}}$. Now let $\Pi$ be any collection of $N^{ {8 \over 3} + O(\sqrt{\mu})}$
subsets of $\Lambda^{\prime}$ with cardinality $N^{{2 \over 3}}$. We chose the exponents
${2 \over 3}$ and ${8 \over 3}$ somewhat
arbitrarily. In particular, they allow us to apply Lemma \ref{Carbery}.
 By pigeonholing, we may find a set of $v$'s of 
cardinality $|H| N^{-f(\epsilon)}$ for which at least $N^{ {8 \over 3} - f(\epsilon)}$ of the sets $\Xi$ in
$\Pi$ have the property that $span(\Xi)$ has density $N^{-{4 \over 3} - f(\epsilon)}$ for $\chi_v$.
From this and Lemma \ref{Carbery} we obtain the lower bound

\begin{equation} \label{lowerbound} \sum_{v \in V - V_{bad}} \sum_{\Xi_1 \in \Pi} \sum_{\Xi_2 \in \Pi}  
{\chi_v (span(\Xi_1) \cap span(\Xi_2)) \over
\chi_v (F_3^N \backslash H) }  \gtrsim |H| N^{{8 \over 3} - f(\epsilon) }. \end{equation}

What this says is that modulo $N^{f(\epsilon)}$ terms, the average density
(in $\chi_v$ measure) of an intersection
$span(\Xi_1) \cap span(\Xi_2)$ is at least approximately $N^{-{8 \over 3}}$. This is rather large
since the density of any single element of $F_3^N \backslash H$ is only approximately $N^{-3}$ in light
of the bound on the Fourier transforms of the fibers, which follows from \eqref{newnumber}.

We will now produce a random construction of $\Pi$. We choose each set of $\Pi$ uniformly at random. We will
write down an upper bound on the expected density in any $\chi_v$ of the intersection
$span(\Xi_1) \cap span(\Xi_2)$ which contradicts the lower bound \ref{lowerbound}.
We observe that if $\rho$ is the density in $\chi_v$ of $span(\Xi_1) \cap span(\Xi_2)$ then
$$\rho \leq N^{-3+O(f(\epsilon))} (3^k-1),$$
(because $\chi _v (x) \lesssim N^{-4 + O(f(\epsilon)}$,)
where $k$ is the nullity of the set $\Xi_1 \cup \Xi_2$. We recall that there are no more than 
$N^{{2 \over 3} + f(\epsilon)}$
elements of $\Lambda^{\prime}$ in any subspace of dimension $2N^{{2 \over 3}}$. Thus as we choose the $2N^{{2 \over 3}}$
elements of $\Xi_1$ and $\Xi_2$, the probability of introducing nullity at each selection is
bounded by $N^{-{4 \over 3} + f(\epsilon)}$. Thus the probability $p_k$ of having nullity $k$ is bounded by
$$p_k \lesssim  (N^{-{2 \over 3}+f(\epsilon)})^k.$$
Thus the expected density of $span(\Xi_1) \cap span(\Xi_2)$ is $O(N^{-{11 \over 3} + f(\epsilon)})$.
This contradicts \eqref{lowerbound}, so we have just proved that $\Delta$ cannot contain a set as thick in
$\Lambda + H$ as required by Theorem \ref{specstruct}, which is a contradiction.

Thus we have proved Theorem \ref{noincrements} and hence Theorem \ref{main}.

\section{Epilogue on efficiency}

The $\epsilon$ which we obtain in Theorem \ref{main} is of necessity quite small. Like many arguments in analysis,
ours does not aim for great efficiency and we seem to lose a factor in the size of $\epsilon$ in nearly
every line of the argument. However, there are a few points in the argument where we lose significantly more.
The most notable of these are the use of the asymmetric Balog Szemeredi Gowers lemma and the discovery
of the additive structure with comity in Corollary \ref{comityatlast}. What these two parts of the
argument have in common is that they are iterative. One starts with a structure and seeks a certain property. If the
property is lacking, one finds a structure which is in a certain sense better but which has cost us
a factor in $\epsilon$.

Iteration can be a powerful thing. It sometimes allows us to prove a very deep thing with very few words. The
proof doesn't care how many iterates must be calculated to implement it. We have a version of our argument
which we will publish elsewhere that adds even another layer of iteration to the argument but spares us
some of the difficulties of Theorem \ref{structuretheory}. 
But certainly, it is less efficient than the argument
presented here.

To improve efficiency, it is important to make the argument less iterative. In hopes of doing so, we leave the
reader with two conjectures:

\begin{conjecture} Let $N>0$ be a large parameter. (Constants may not depend on $N$.)
Suppose  $B$ and $C$ are sets in $F_3^N$ each of whose cardinality is bounded by $N^{100}$.
  Suppose the number of quadruplets $b_1+c_1=b_2+c_2$
with $b_1,b_2 \in B$ and $c_1,c_2 \in C$ is at least
$|B| |C|^2  N^{-\sigma}.$
Then $C$ is contained in a subspace of dimension $N^{O(\sigma)}$.  \end{conjecture}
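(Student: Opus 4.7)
The hypothesis is precisely that of Lemma~\ref{absg} with $\eta = \sigma$, whose conclusion produces an $\mu$-additively closed set $K$ and a subset $C_0 := C \cap (x_0 + K)$ of size $\gtrsim N^{-O(\mu)} |C|$. Combined with Theorem~\ref{Freiman}, this already places $C_0$ inside a subspace of dimension $N^{O(\mu)}$. Two improvements beyond this are required to prove the conjecture: (i) sharpen the dependence from $\mu = f(\sigma)$, with $f\to 0$ as $\sigma \to 0$, to the polynomial bound $\mu = O(\sigma)$; and (ii) upgrade the conclusion from a dense subset of $C$ to all of $C$.

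For (i), I would replace Lemma~\ref{absg} with a polynomial-strength asymmetric Balog-Szemeredi-Gowers theorem---for instance Schoen's polynomial BSG or inverse theorems coming from recent work on the polynomial Freiman-Ruzsa conjecture---producing a set $K$ with $|K-K| \leq N^{O(\sigma)} |K|$ and a subset $C_0 \subset C$ of relative density $\gtrsim N^{-O(\sigma)}$ contained in a translate of $K$. Theorem~\ref{Freiman} then places $C_0$ inside a subspace $V_1$ of dimension at most $\log_3 |K| + N^{O(\sigma)}$; since $|K| \leq N^{O(1)}$ the first term contributes only $O(\log N)$, which is absorbed into $N^{O(\sigma)}$ in the regime where the conjecture is non-trivial.

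For (ii), I would iterate. Set $\widetilde C := C \setminus V_1$ and rerun the argument on the pair $(B, \widetilde C)$. The key input required is the residual energy bound
\beqa
E(B, \widetilde C) \gtrsim |B| |\widetilde C|^2 N^{-O(\sigma)},
\eeqa
after which each iteration removes an $N^{-O(\sigma)}$ fraction of the remaining set while contributing $N^{O(\sigma)}$ to the dimension. After $T = N^{O(\sigma)}$ iterations (absorbing the $O(\log N)$ factor from $\log |C|$), $C$ is exhausted and the span of the collected $V_j$'s has dimension $T \cdot N^{O(\sigma)} = N^{O(\sigma)}$, as required.

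The main obstacle is the residual energy bound: a priori, nearly all additive quadruples of $(B,C)$ could have both $c_1, c_2 \in V_1 \cap C$, in which case removing $V_1 \cap C$ destroys the hypothesis and the iteration halts. To bypass this I would try to produce all the covering pieces at once, via a covering version of asymmetric BSG in the spirit of Lemma~\ref{chunkiness} of this paper: disjoint structured blocks $C_1, \dots, C_T \subset C$ each covered by a translate of an $O(\sigma)$-additively closed set, whose union exhausts $C$. Such a statement is essentially an asymmetric form of polynomial Freiman-Ruzsa, so genuinely new input (most plausibly from the recent proof of PFR) appears necessary beyond the techniques presented here.
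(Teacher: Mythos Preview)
The statement you are attempting to prove is a \emph{conjecture}, explicitly left open in the paper's epilogue; the paper offers no proof, so there is nothing to compare your argument against. The authors pose it precisely because the tools of the paper (Lemma~\ref{absg} and Theorem~\ref{Freiman}) give only the weaker $\mu = f(\sigma)$ dependence and only capture a dense subset of $C$, and they view sharpening this as a route to a better value of~$\epsilon$ in Theorem~\ref{main}.

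Your proposal is a fair survey of where the difficulties lie, and you are candid that it is not a proof. Two comments. First, regarding step~(i): even granting a polynomial asymmetric Balog--Szemer\'edi--Gowers theorem and the (now proven) polynomial Freiman--Ruzsa conjecture, the subspace you produce for $C_0$ has dimension $\log_3|K| + O(\sigma \log N)$, and the $\log_3|K|$ term is genuinely of order $\log N$; your remark that this is ``absorbed into $N^{O(\sigma)}$'' only holds when $\sigma$ is bounded below by a positive constant (or at least $\sigma \log N \to \infty$), so the conjecture as literally stated is already false for $\sigma$ tending to~$0$ --- take $B = C$ a subspace of dimension $100\log_3 N$, giving $\sigma = 0$ but span of dimension $\gg 1$. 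Second, regarding step~(ii): you correctly identify the central obstruction, namely that removing the structured piece may annihilate the energy hypothesis. Your proposed remedy --- a simultaneous covering version of asymmetric BSG producing disjoint structured blocks exhausting $C$ --- is indeed what would be needed, but this is not known to follow from PFR and is essentially a restatement of the conjecture itself. So the proposal does not close the gap; it relocates it.
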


\bigskip

\begin{conjecture} Suppose $\Delta$ is a subset of $F_3^N$ which is $N^{\sigma}$ additively non-smoothing.
(We may assume as in the previous conjecture that $|\Delta| \leq N^{100}$.
Then $\Delta$ admits an additive structure at some height with   $N^{O(\sigma)}$ comity.
\end{conjecture}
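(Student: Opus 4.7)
This is posed as an open conjecture, so any plan must be speculative. The natural approach is to replace the iterative descent in Lemma \ref{Iteration} and Corollary \ref{comityatlast} by a one-shot structural statement. In the existing proof, starting from an additive structure at height $\alpha$, one applies a Cauchy--Schwarz and pigeonholing pair to produce either comity at that height or a strictly lower-height structure, at the cost of a constant factor blow-up in ambiguity. After $\sim 1/\mu$ passes one has accumulated $C^{1/\mu}\eta$ ambiguity, which is precisely the source of the $\mu \sim 1/\log(1/\eta)$ loss. To obtain $N^{O(\sigma)}$ comity from $N^{\sigma}$ non-smoothing one must either terminate the descent after a bounded number of steps or prove comity directly at the initial height.

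First I would attempt the following direct attack. Partition $\Delta - \Delta$ dyadically by multiplicity, writing $D_\tau = \{ x : m(x) \sim N^\tau \}$, and use the non-smoothing hypothesis to control $\sum_\tau |D_\tau|\, N^{4\tau}$ sharply, with only an $N^{O(\sigma)}$ slack. One then hopes that for the dominant $\tau$, the quantity $K(\Delta,G_\tau,D_\tau)$ from (\ref{Komity}) is already concentrated on pairs $(x,y)$ for which $|\Delta_G[x] \cap \Delta_G[y]|$ sits near its maximum possible value $N^{1+\alpha}$, and that this concentration follows from $E_8(\Delta)$ being nearly extremal via a single application of Cauchy--Schwarz rather than an iterated one. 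In effect, one would like a sharp converse to the trivial inequality $K(\Delta,G,D)^{2} \lesssim |D|^2 \cdot E_{\mathrm{high}}$, with loss only $N^{O(\sigma)}$.

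The main obstacle is precisely the compounding of pigeonhole losses. At each scale extraction in the current proof, the choice of dyadic bucket introduces an independent factor, and summing over $\log N$ scales reproduces the very logarithmic loss that the conjecture asks us to avoid. To break this barrier one would need a scale-free combinatorial inequality --- for instance, a sharpened Pl\"unnecke or asymmetric Balog--Szemer\'edi--Gowers statement in which the ambiguity depends polynomially rather than logarithmically on the input exponent, together with a form of Freiman's theorem (Theorem \ref{Freiman}) whose dimension bound has the same polynomial dependence. I expect this to be the critical bottleneck, and plausibly the reason the conjecture is still open: a proof likely requires a genuinely new additive-combinatorial inequality rather than a refinement of the iteration used in Section \ref{structuresection}.
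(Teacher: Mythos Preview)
The statement is a \emph{conjecture}, and the paper does not prove it; it is posed as an open problem in the Epilogue precisely because the authors' iterative arguments in Corollary \ref{comityatlast} and the asymmetric Balog--Szemer\'edi--Gowers step lose too much to reach $N^{O(\sigma)}$ comity. Your proposal correctly recognizes this, and your diagnosis of the bottleneck --- the constant-factor blow-up in ambiguity per iteration, accumulating to $C^{1/\mu}\eta$ after $\sim 1/\mu$ passes --- is exactly the reason the paper singles out this step as inefficient. Your suggested one-shot attack via a sharp scale-free inequality is a reasonable speculation, but since neither you nor the paper supplies such an inequality, there is nothing further to compare: both the paper and your write-up leave the conjecture open, for essentially the same stated reason.
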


\bigskip
\bigskip

\tiny

\textsc{M. BATEMAN, DEPARTMENT OF MATHEMATICS, UCLA, LOS ANGELES CA}

{\it bateman@math.ucla.edu}

\bigskip

\textsc{N. KATZ, DEPARTMENT OF MATHEMATICS, INDIANA UNIVERSITY, BLOOMINGTON IN}

{\it nhkatz@indiana.edu}

\end{document}